\let\epsilon=\varepsilon
\let\phi=\varphi
\setlist[enumerate,1]{label=\rm(\arabic*)}
\setlist[enumerate,2]{label=\rm(\alph*)}
\setlist[enumerate,3]{label=\rm(\roman*)}
\newcommand{\iso}{\cong}
\newcommand{\ie}{\textit{i.e.}}
\newcommand{\resp}{\textit{resp.}}
\newcommand{\ignore}[1]{\relax}
\newcommand{\bN}{\mathbb{N}}
\newcommand{\bZ}{\mathbb{Z}}
\newcommand{\nbd}{\nobreakdash}
\newcommand{\id}{\ensuremath{\mathrm{id}}}
\DeclareMathOperator{\tensor}{\otimes}
\numberwithin{equation}{section}
\newtheorem{theorem}[equation]{Theorem}
\newtheorem{corollary}[equation]{Corollary}
\newtheorem{proposition}[equation]{Proposition}
\newtheorem{lemma}[equation]{Lemma}
\newtheorem*{theorem*}{Theorem}
\theoremstyle{definition}
\newtheorem{definition}[equation]{Definition}
\newtheorem{example}[equation]{Example}
\newtheorem{notation}[equation]{Notation}
\let\oldtocsubsection=\tocsubsection
\renewcommand{\tocsubsection}[2]{\hspace{3.5em}\(\cdot\)~\oldtocsubsection{#1}{#2}}
\newcommand{\inv}{^{-1}}
\DeclareMathOperator{\coker}{coker}
\newcommand{\RR}{R}
\newcommand{\RRR}{\RR_{*}}
\newcommand{\R}{\RRR[t,t\inv]}
\newcommand{\Rp}[1]{t^{#1}\Rpp}
\newcommand{\Rn}[1]{t^{#1}\Rnn}
\newcommand{\Rpp}{\RRR[t]}
\newcommand{\Rnn}{\RRR[t\inv]}
\newcommand{\powers}[1]{[\kern -1pt [{#1}] \kern -1pt ]} %Tighter [[-]]
\newcommand{\nov}[1]{(\kern -1.7pt ( {#1})\kern-1.7pt )} %Tighter ((-))
\newcommand{\fd}{$R_{0}$\nbd-finitely dominated}
\newcommand{\Novp}{\RRR \nov {t}}         % R((t))
\newcommand{\Novm}{\RRR \nov {t\inv}}     % R((1/t))
\newcommand{\Psp}{\RRR \powers {t}}       % R[[t]]
\newcommand{\Psm}{\RRR \powers {t\inv}}   % R[[1/t]]
\DeclareMathOperator{\cone}{\mathrm{cone}}
\newcommand{\tr}{\mathrm{tr}}
\newcommand{\eqv}{\ensuremath{\Leftrightarrow}}
\newcommand{\impl}{\ensuremath{\Rightarrow}}
\newcommand{\po}{\text{\rm\raisebox{-0.7pt}{\scalebox{1.5}{\ensuremath{\cdot}}}\hglue0.8pt\llap{\raisebox{-1em}{\scalebox{2.5}{\textopencorner}}}}}
\newcommand{\htorusp}{\mathfrak{H}^{+}}
\begin{document}

\title%
[Non-commutative localisation and finite domination]%
{Non-commutative localisation and finite domination over strongly
  $\bZ$-graded rings}

\date{\today}

\author{Thomas H\"uttemann}

\address{Thomas H\"uttemann\\ Queen's University Belfast\\ School of
  Mathematics and Physics\\ Mathematical Sciences Research Centre\\
  Belfast BT7~1NN\\ Northern Ireland, UK\hfill\break ORCiD: 0000-0002-3323-4503}

\email{t.huettemann@qub.ac.uk}

\begin{abstract}
  Let $\RR = \bigoplus_{-\infty}^{\infty} \RR_{k}$ be a strongly
  $\bZ$-graded ring, and let $C^{+}$ be a chain complex of modules
  over the subring $P = \bigoplus_{0}^{\infty} \RR_{k}$. The complex
  $C^{+} \tensor_{P} \RR_{0}$ is contractible (\resp, $C^{+}$ is
  $\RR_{0}$\nbd-finitely dominated) if and only if
  $C^{+} \tensor_{P} L$ is contractible, where $L$ is a suitable
  non-commutative localisation of~$P$. We exhibit universal properties
  of these localisations, and show by example that an
  $\RR_{0}$\nbd-finitely dominated complex need not be
  $P$\nbd-homotopy finite.
\end{abstract}

\keywords{Non-commutative localisation, finite domination, type FP,
  strongly graded ring, {\scriptsize\textsc{Novikov}} homology,
  algebraic mapping torus, {\scriptsize\textsc{Mather}} trick.}

\subjclass[2010]{16W50, 16E99, 55U15, 18G35}

\maketitle

\ \vglue 0.5 cm \ % to force nicer page break in TOC

\tableofcontents

\part*{Introduction.}

\subsection*{Finite domination}

Let $\RR_{0}$ be a unital ring, possibly non-commutative. A chain
complex $C$ of $\RR_{0}$\nbd-modules is called \textit{\fd{}} if it is
a retract up to homotopy of a bounded complex of finitely generated
free $\RR_{0}$\nbd-modules. When $C$ is bounded and consists of
projective $\RR_{0}$\nbd-modules, $C$ is \fd{} if and only if $C$ is
homotopy equivalent to a bounded complex of finitely generated
projective $\RR_{0}$\nbd-modules
\cite[Proposition~3.2~(ii)]{Ranicki-finiteness}; this is sometimes
expressed by saying that $C$ is ``of type~FP''.

\subsection*{Non-commutative localisation}

A \textit{$K$\nbd-ring} is a unit-preserving homomorphism
$K \rTo S$ of unital rings with domain~$K$. Let $\Sigma$
be a set of homomorphisms of finitely generated projective (right)
$K$\nbd-modules. The $K$\nbd-ring $f \colon K \rTo S$
is called \textit{$\Sigma$\nbd-inverting\/} if all the induced maps
\begin{displaymath}
  \sigma \tensor S \colon P \tensor_{K} S \rTo Q
  \tensor_{K} S \ , \qquad \big( \sigma \colon P \rTo Q \big) \in
  \Sigma
\end{displaymath}
are isomorphisms of $S$\nbd-modules. The \textit{non-commutative
  localisation of~$K$ with respect to~$\Sigma$} is the
$K$\nbd-ring
$\lambda_{\Sigma} \colon K \rTo \Sigma\inv K$ which is
initial in the category of $\Sigma$\nbd-inverting $K$\nbd-rings;
it exists for all~$\Sigma$ \cite[Theorem~4.1]{MR800853}.

\subsection*{Detecting contractibility and finite domination using
  non-com\-mu\-ta\-tive localisation}

Let $C^{+}$ be a bounded chain complex consisting of finitely
generated free modules over the polynomial ring~$\RR_{0}[t]$, where
$t$ is a (central) indeterminate commuting with all elements
of~$\RR_{0}$. Our starting point is the following
result obtained by \textsc{Ranicki}: \goodbreak

\begin{theorem*}
  There are sets $\tilde \Omega_{+}$ and $\Omega_{+}$ of square
  matrices with entries in~$\RR_{0}[t]$, considered as maps between
  finitely generated free $\RR_{0}[t]$\nbd-modules, such that
  \begin{itemize}
  \item[{\rm (A)}] %
    The induced complex %!
    $C^{+} \tensor_{\RR_{0}[t]} \RR_{0}$ is
    contractible (tensor product via the map
    $\RR_{0}[t] \rTo \RR_{0}$, $t \mapsto 0$) if and only if the
    induced chain complex
    $C^{+} \tensor_{\RR_{0}[t]} \tilde \Omega_{+}\inv \RR_{0}[t]$ is
    contractible \cite[Proposition~10.13]{Ranicki-knot};
  \item[{\rm (B)}] $C^{+}$ is \fd{} if and only if the induced chain
    complex $C^{+} \tensor_{\RR_{0}[t]} \Omega_{+}\inv \RR_{0}[t]$ is
    contractible \cite[Proposition~10.11]{Ranicki-knot}.
  \end{itemize}
\end{theorem*}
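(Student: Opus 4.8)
The plan is to produce $\tilde\Omega_{+}$ and $\Omega_{+}$ explicitly and then, for each of (A) and (B), to prove the two implications separately: in both cases one implication falls out formally from the universal property of the non\nbd-commutative localisation, while the other requires a homotopy\nbd-theoretic construction. I would take $\tilde\Omega_{+}$ to be the set of square matrices $\omega$ over $\RR_{0}[t]$ that become invertible over $\RR_{0}$ under the substitution $t = 0$ — equivalently, those with $\coker(\omega)\tensor_{\RR_{0}[t]}\RR_{0} = 0$ — and $\Omega_{+}$ to be the set of \emph{Fredholm matrices}: injective square matrices $\omega$ over $\RR_{0}[t]$ whose cokernel $\coker(\omega)$ is finitely generated projective as an $\RR_{0}$\nbd-module. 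Then $\RR_{0} = \RR_{0}[t]/(t)$ is a $\tilde\Omega_{+}$\nbd-inverting $\RR_{0}[t]$\nbd-ring, so the universal property of $\lambda_{\tilde\Omega_{+}}$ yields a factorisation $\RR_{0}[t] \rTo \tilde\Omega_{+}\inv\RR_{0}[t] \rTo \RR_{0}$; and since for each $T \in M_{k}(\RR_{0})$ the ``companion matrix'' $tI - T$ is injective with cokernel the $\RR_{0}[t]$\nbd-module $\RR_{0}^{k}$ on which $t$ acts as $T$, while — by a Neumann\nbd-series argument — every Fredholm matrix becomes invertible over the \textsc{Novikov} ring $\RR_{0}\nov{t\inv} = \RR_{0}\powers{t\inv}[t]$, there is similarly a factorisation $\RR_{0}[t] \rTo \Omega_{+}\inv\RR_{0}[t] \rTo \RR_{0}\nov{t\inv}$.

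For (A), the implication ``$C^{+}\tensor\tilde\Omega_{+}\inv\RR_{0}[t]$ contractible $\impl$ $C^{+}\tensor_{\RR_{0}[t]}\RR_{0}$ contractible'' is base change along $\tilde\Omega_{+}\inv\RR_{0}[t] \rTo \RR_{0}$. For the converse, suppose $\bar C := C^{+}\tensor_{\RR_{0}[t]}\RR_{0}$ is contractible and fix an $\RR_{0}$\nbd-linear chain contraction $\bar\Gamma$ of $\bar C$; since $C^{+}_{i}$ is free over $\RR_{0}[t]$ of the same rank as the free $\RR_{0}$\nbd-module $\bar C_{i}$, the matrices of $\bar\Gamma$ may be read over $\RR_{0}[t]$ and thereby define a map $\Gamma_{0}$ of degree $+1$ on $C^{+}$ with $\Gamma_{0}\bmod t = \bar\Gamma$. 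I would then set $h := d\Gamma_{0} + \Gamma_{0}d \colon C^{+} \rTo C^{+}$. This $h$ is a chain map (because $d^{2} = 0$), it is null\nbd-homotopic with explicit null\nbd-homotopy $\Gamma_{0}$, and $h \bmod t = \bar d\bar\Gamma + \bar\Gamma\bar d = \id_{\bar C}$, so each diagonal block $h_{i}$ is a square matrix over $\RR_{0}[t]$ congruent to the identity modulo $t$; that is, $h_{i} \in \tilde\Omega_{+}$. Hence, after applying $-\tensor_{\RR_{0}[t]}\tilde\Omega_{+}\inv\RR_{0}[t]$, the chain map $h$ becomes an \emph{isomorphism} of complexes that is still null\nbd-homotopic, and composing its inverse with the null\nbd-homotopy of $h$ presents the identity of $C^{+}\tensor\tilde\Omega_{+}\inv\RR_{0}[t]$ as null\nbd-homotopic; thus that complex is contractible. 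The only feature of $\RR_{0}[t]$ used here is the augmentation onto $\RR_{0}$, so the same argument works for any ring mapping onto $\RR_{0}$, in particular for $P = \bigoplus_{0}^{\infty}\RR_{k}$.

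For (B): if $C^{+}\tensor\Omega_{+}\inv\RR_{0}[t]$ is contractible then so is $C^{+}\tensor\RR_{0}\nov{t\inv}$ (base change along the second factorisation above), whence $C^{+}$ is \fd{} by \textsc{Ranicki}'s theorem on finite domination and \textsc{Novikov} rings — the algebraic incarnation of the \textsc{Mather} trick — to the effect that finite domination over $\RR_{0}$ is detected by contractibility over the \textsc{Novikov} ring at the end $t = \infty$. (Alternatively: each $\cone(\omega)$, $\omega \in \Omega_{+}$, is quasi\nbd-isomorphic to $\coker(\omega)$, hence — both being bounded complexes of projective $\RR_{0}$\nbd-modules — $\RR_{0}$\nbd-homotopy equivalent to it, so $\cone(\omega)$ is \fd{}; and as the \fd{} objects form a thick subcategory of the derived category of perfect $\RR_{0}[t]$\nbd-complexes, contractibility of $C^{+}\tensor\Omega_{+}\inv\RR_{0}[t]$ — which places $C^{+}$ in the thick subcategory generated by the $\cone(\omega)$, provided the localisation is well enough behaved for this — again makes $C^{+}$ \fd{}.) For the converse, suppose $C^{+}$ is \fd{}; this is the point where \textsc{Ranicki}'s finiteness machinery must enter: $C^{+}$ is then chain equivalent over $\RR_{0}[t]$ to a bounded complex $D$ each of whose terms $D_{i}$ is finitely generated projective over $\RR_{0}$. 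Choosing a complementary summand so that $D_{i} \oplus D_{i}' \iso \RR_{0}^{k}$ is free over $\RR_{0}$, and extending the endomorphism $t$ of $D_{i}$ by zero on $D_{i}'$ to a matrix $T \in M_{k}(\RR_{0})$, one gets $\RR_{0}^{k} \iso \coker(tI - T)$ as $\RR_{0}[t]$\nbd-modules, with $D_{i}$ an $\RR_{0}[t]$\nbd-module direct summand and $tI - T \in \Omega_{+}$; therefore $\coker(tI - T)\tensor_{\RR_{0}[t]}\Omega_{+}\inv\RR_{0}[t] = \coker\big((tI - T)\tensor\Omega_{+}\inv\RR_{0}[t]\big) = 0$, and so $D_{i}\tensor\Omega_{+}\inv\RR_{0}[t] = 0$. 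Thus $D\tensor\Omega_{+}\inv\RR_{0}[t]$ is the zero complex, and $C^{+}\tensor\Omega_{+}\inv\RR_{0}[t]$ is contractible.

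The genuine difficulty is concentrated in (B). Part (A) bootstraps an honest chain contraction over the localisation from one modulo $t$ by a short and robust computation. Part (B), on the other hand, rests on two substantial ingredients that must be supplied rather than quoted from the excerpt: first, the statement that finite domination over $\RR_{0}$ is \emph{equivalent} to contractibility over $\Omega_{+}\inv\RR_{0}[t]$, which one reaches either via the \textsc{Novikov} ring $\RR_{0}\nov{t\inv}$ — and then needs a \textsc{Mather}\nbd-type transversality argument — or directly, needing the Cohn localisation $\RR_{0}[t] \rTo \Omega_{+}\inv\RR_{0}[t]$ to be well enough behaved (for instance stably flat) that its derived category is the Verdier quotient of that of $\RR_{0}[t]$ by the cones of the Fredholm matrices; and second, the relative finiteness statement invoked above, delicate precisely because — as the example later in this paper shows — ``finitely generated projective $\RR_{0}[t]$\nbd-complex'' cannot in general be strengthened to ``finitely generated free $\RR_{0}[t]$\nbd-complex''. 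I expect the controlled passage between the Fredholm localisation and the \textsc{Novikov} completion to be the real obstacle.
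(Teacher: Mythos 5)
Your proof of (A) is in substance the paper's: the map $h = d\Gamma_{0} + \Gamma_{0}d$ built from a lifted contraction is exactly what the paper calls a \emph{proto\nbd-contraction}, the fact that each $h_{i}$ lies in~$\tilde\Omega_{+}$ is Lemma~\ref{lem:invertible_in_tilde_loc}, and your passage from ``null\nbd-homotopic isomorphism'' to ``contractible'' is Lemma~\ref{lem:proto}. Likewise, in~(B) your direction ``contractible over $\Omega_{+}\inv\RR_{0}[t]$ implies \fd'' is the paper's route through the factorisation $\Rpp \rTo \Omega_{+}\inv\Rpp \rTo \Novm$ followed by Theorem~\ref{thm:R0-fd-R+}.

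The direction ``\fd{} implies contractibility over $\Omega_{+}\inv\RR_{0}[t]$'' is where you diverge, and this is where your argument has a genuine gap. You assert that an $\RR_{0}$\nbd-finitely dominated bounded complex of finitely generated free $\RR_{0}[t]$\nbd-modules is ``chain equivalent \emph{over $\RR_{0}[t]$}'' to a bounded complex~$D$ whose chain modules are finitely generated projective over~$\RR_{0}$. This is not justified, and as stated it cannot be quite right: the modules~$D_{i}$ you describe (an $\RR_{0}$\nbd-projective module with $t$ acting by some endomorphism~$T$) typically have projective dimension one over $\RR_{0}[t]$, so a genuine $\RR_{0}[t]$\nbd-chain homotopy equivalence between the projective complex~$C^{+}$ and such a~$D$ need not exist; the correct relation is a quasi\nbd-isomorphism. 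But a quasi\nbd-isomorphism is not preserved by an underived tensor with $\Omega_{+}\inv\RR_{0}[t]$, so the final step ``$D \tensor \Omega_{+}\inv\RR_{0}[t] = 0$ hence $C^{+} \tensor \Omega_{+}\inv\RR_{0}[t]$ is contractible'' then needs a derived\nbd-tensor/resolution argument that you do not supply (and you would also need to justify, rather than invoke by Neumann series, that a general \textsc{Fredholm} matrix becomes invertible over~$\Novm$, which the paper gets only after the nontrivial Proposition~\ref{prop:Fredholm_matrices+}). Put plainly, the structural finiteness theorem your argument leans on is essentially Ranicki's Proposition~10.11 itself, so the argument is close to circular.

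The paper avoids all of this by a different and self\nbd-contained route in the proof of Theorem~\ref{thm:non-com-fd+}: given that $C^{+} \tensor_{\Rpp} \Novm$ is contractible (Theorem~\ref{thm:R0-fd-R+}), one \emph{truncates} the entries of the contracting homotopy at some $t^{m}$, $m \ll 0$, producing matrices~$S^{+}_{n}$ over~$\R$ for which $E_{n} = D^{+}_{n+1}S^{+}_{n} + S^{+}_{n-1}D^{+}_{n}$ is a unit matrix plus strictly negative\nbd-degree terms, hence in~$\Omega_{+}$. These~$S^{+}_{n}$ then constitute a proto\nbd-null homotopy of an auxiliary chain map $g \colon C^{+} \rTo B^{+}$ that becomes an isomorphism over the localisation, and Lemma~\ref{lem:proto} closes the argument. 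This truncation manoeuvre is the key idea you are missing, and it is precisely what makes the argument transfer verbatim to strongly $\bZ$\nbd-graded rings, where the structural ``endomorphism\nbd-category'' picture you sketch is no longer available (as the paper's Leavitt\nbd-algebra example illustrates).
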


\subsection*{Content of the paper}

In this note, \textsc{Ranicki}'s results are extended to a larger
class of rings containing polynomial rings as special examples. Let
$\RR = \bigoplus_{k \in \bZ} \RR_{k}$ be a $\bZ$\nbd-graded ring. The
polynomial ring $\RR[t]$ has a subring, denoted~$\Rpp$, consisting of
those polynomials $\sum_{k} r_{k} t^{k}$ with $r_{k} \in \RR_{k}$; up
to the ring isomorphism symbolised by $t \mapsto 1$, this is the
$\bN$\nbd-graded ring $\bigoplus_{k \geq 0} \RR_{k}$.  We will show
that {\it the results above remain valid mutatis mutandis if the
  polynomial ring~$\RR_{0}[t]$ is replaced by the $\bN$\nbd-graded
  ring~$\Rpp$ throughout, where in~{\rm(B)} we additionally demand the
  $\bZ$\nbd-graded ring~$\RR$ to be \emph{strongly} graded}. This last
conditions means that the multiplication map
$R_{k} \tensor_{R_{0}} R_{-k} \rTo R_{0}$ is surjective for all
$k \in \bZ$. It is surprising that the results rest exclusively on the
(strongly) graded structure of the underlying rings, and not on the
specific form of polynomial rings in one indeterminate.

\subsection*{Organisation of the paper}

The paper is divided into three parts, discussing $\bZ$\nbd-graded
rings and non-commutative localisation, contractible complexes, and
finite domination respectively. Independently, the material is divided
into numbered sections.

\subsection*{Conventions}

All rings are unital, ring homomorphisms preserve unity, and modules
are unital and right, unless stated otherwise.

\part{Graded rings, proto-contractions, and non-commutative localisation}

\section{Constructing new rings from a $\bZ$-graded ring}

For a (unital) ring~$\RR$ we can construct various polynomial and
power series rings using a central indeterminate~$t$; the rings
$\RR[t]$, $\RR[t\inv]$, $\RR[t,t\inv]$, $\RR \powers{t}$,
$\RR \powers{t\inv}$, $\RR \nov{t} = \RR \powers{t}[1/t]$ and
$\RR \nov{t\inv} = \RR \powers{t\inv}[1/t\inv]$ will be of
relevance. Elements of these rings can be written as formal sums
$\sum_{k} r_{k} t^{k}$, with suitable restrictions on the the number
and sign of indices of non-zero coefficients~$r_{k}$.

Suppose now that $\RR = \bigoplus_{k \in \bZ} \RR_{k}$ is equipped
with the structure of a $\bZ$\nbd-graded ring. We can then define
subrings of the rings above by requiring that for all $k \in \bZ$ the
coefficient $r_{k}$ of~$t^{k}$ lies in~$\RR_{k}$. The resulting rings
will be denoted by the symbols $\Rpp$, $\Rnn$, $\R$, $\Psp$, $\Psm$,
$\Novp$ and~$\Novm$, respectively. For example,
\begin{displaymath}
  \Novp = \bigcup_{p \geq 0} \Big\{ \sum_{k=-p}^{\infty} r_{k} t^{k}
  \,|\, \forall k \colon r_{k} \in \RR_{k} \Big\} \ .
\end{displaymath}
As a graded rings, $\R = \RR$ \textit{via} the map
symbolically described as $t \mapsto 1$. Similarly
$\Rpp = \bigoplus_{k \geq 0} \RR_{k}$ and
$\Rnn = \bigoplus_{k \leq 0} \RR_{k}$. % Taking the analogy with
% indeterminates one step further, 
We write
\begin{equation}
  \label{eq:t_n-R}
  t^{n} \Rpp = \bigoplus_{k \geq n} \RR_{k} %
  \quad \text{and} \quad %
  t^{n} \Rnn = \bigoplus_{k \leq n} \RR_{k} \ ,
\end{equation}
which are (left and right) modules over~$\Rpp$ and~$\Rnn$,
respectively; the symbol $t^{n} \Psm$ denotes the $\Psm$-module of
formal power series involving powers of~$t$ not exceeding~$n$.

% We will also use the notation $t^d \Psm$ 

\medbreak

For later use we introduce notation for truncation of formal power
series. For $-\infty \leq \ell < u \leq \infty$ we define
\begin{equation*}
  \tr_{\ell}^{u} \colon \sum_{k \in \bZ} r_{k} t^{k} \mapsto
  \sum_{k=\ell}^{u} r_{k} t^{k} \ ,
\end{equation*}
and abbreviations in the special cases $\ell = -\infty$ and
$u = \infty$,
\begin{equation}
  \label{eq:tr_one}
  \tr^{u} = \tr_{-\infty}^{u} \qquad \text{and} \qquad \tr_{\ell} =
  \tr_{\ell}^{\infty} \ .
\end{equation}
For example, the map
\begin{equation}
  \label{eq:tr0}
  \tr^{0} \colon \Rpp \rTo \RR_{0} \ , \quad \sum_{k=0}^{d} r_{k}
  t^{k} \mapsto r_{0}
\end{equation}
is the ``constant-coefficient'' ring homomorphism which is given
symbolically by $t \mapsto 0$.

\section{Strongly graded rings}
\label{sec:strongly_graded_rings}

\subsection*{Strongly graded rings and partitions of unity}

Let $\RR = \R$ be a $\bZ$\nbd-graded ring. A finite sum expressions $1
= \sum_{j} \alpha^{(n)}_{j} \beta^{(-n)}_{j}$
% \begin{equation}
%   \label{eq:pou}
%   1 = \sum_{j} \alpha^{(n)}_{j} \beta^{(-n)}_{j}
% \end{equation}
with $\alpha^{(n)}_{j_{n}} \in \RR_{n}$ and
$\beta^{(-n)}_{j_{n}} \in \RR_{-n}$ is called a \textit{partition of
  unity of type~\((n,-n)\)\/}.
The ring $\R$ is called \textit{strongly graded\/} (\textsc{Dade}
\cite[\S1]{GRD}) if there exists a partition of unity of
type~\((n,-n)\)
for every $n \in \bZ$; equivalently, if the multiplication map
\begin{equation}
  \label{eq:mult}
  \pi_{n} \colon \RR_{n} \tensor_{\RR_{0}} \RR_{-n} \rTo \RR_{0} \ , \quad
  x \tensor y \mapsto xy
\end{equation}
is surjective for every $n \in \bZ$.

\begin{lemma}
  \label{lem:pi_n-is-iso}
  If $\pi_{n}$ is onto, then $\pi_{n}$ is an isomorphism of
  $\RR_{0}$\nbd-$\RR_{0}$\nbd-bimodules.
\end{lemma}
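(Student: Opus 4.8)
The plan is to reduce the assertion to injectivity of~$\pi_{n}$. The map $\pi_{n}$ is the restriction of the multiplication of~$\RR$, so it is evidently a homomorphism of $\RR_{0}$\nbd-$\RR_{0}$\nbd-bimodules, and it is onto by hypothesis; since the set\nbd-theoretic inverse of a bijective bimodule homomorphism is again a bimodule homomorphism, it suffices to prove that $\pi_{n}$ is injective.

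For injectivity I would use a partition of unity. That $\pi_{n}$ is onto means in particular that $1 \in \RR_{0}$ is a value of~$\pi_{n}$, so there are finitely many $\alpha_{j} \in \RR_{n}$ and $\beta_{j} \in \RR_{-n}$ with $\sum_{j} \alpha_{j} \beta_{j} = 1$, that is, a partition of unity of type~$(n,-n)$. An arbitrary element of $\RR_{n} \tensor_{\RR_{0}} \RR_{-n}$ is a finite sum of elementary tensors, say $\xi = \sum_{i} x_{i} \tensor y_{i}$; assume $\pi_{n}(\xi) = \sum_{i} x_{i} y_{i} = 0$. Inserting the partition of unity on the right of the second tensor factor and then sliding the degree\nbd-zero elements $y_{i} \alpha_{j} \in \RR_{-n} \RR_{n} \subseteq \RR_{0}$ across the tensor symbol gives
\begin{align*}
  \xi &= \sum_{i} x_{i} \tensor y_{i} = \sum_{i,j} x_{i} \tensor y_{i} \alpha_{j} \beta_{j}
       = \sum_{i,j} x_{i} (y_{i} \alpha_{j}) \tensor \beta_{j} \\
      &= \sum_{i,j} (x_{i} y_{i}) \alpha_{j} \tensor \beta_{j}
       = \sum_{j} \Big( \sum_{i} x_{i} y_{i} \Big) \alpha_{j} \tensor \beta_{j} = 0 \ ,
\end{align*}
using only associativity in~$\RR$, the relation $r a \tensor b = r \tensor a b$ valid for $a \in \RR_{0}$ in $\RR_{n} \tensor_{\RR_{0}} \RR_{-n}$, and the hypothesis $\sum_{i} x_{i} y_{i} = 0$. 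Hence $\pi_{n}$ has trivial kernel and is therefore a bimodule isomorphism.

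I do not expect a real obstacle; the one place to be slightly careful is the grading bookkeeping, namely that $y_{i} \alpha_{j} \in \RR_{0}$ and $x_{i} y_{i} \in \RR_{0}$, since it is precisely membership in~$\RR_{0}$ that licenses moving these factors past~$\tensor$, together with the (routine) observation that it is enough to treat finite sums of elementary tensors. Equivalently, and more symmetrically, one may simply display the candidate two\nbd-sided inverse $\RR_{0} \rTo \RR_{n} \tensor_{\RR_{0}} \RR_{-n}$, $r \mapsto \sum_{j} r \alpha_{j} \tensor \beta_{j}$, and check by the same short computation that both composites with~$\pi_{n}$ equal the identity.
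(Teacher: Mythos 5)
Your proof is correct and uses essentially the same idea as the paper: the paper also passes immediately to a partition of unity and defines the map $\kappa_{n} \colon x \mapsto \sum_{j} \alpha_{j}^{(n)} \tensor \beta_{j}^{(-n)} x$ as a one-sided inverse, verifying $\kappa_{n}\pi_{n} = \id$ by sliding the degree-zero element $\beta_{j}^{(-n)}x$ across the tensor sign, exactly as in your remark at the end. The only cosmetic difference is that you insert the partition of unity on the right of $y_{i}$ and slide leftward, while the paper inserts it on the left and slides rightward; the bookkeeping is otherwise identical.
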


\begin{proof}
  The map $\pi_{n}$ is clearly left and right $\RR_{0}$\nbd-linear. If
  $\pi_{n}$ is onto we can choose a partition of unity
  % ~\eqref{eq:pou}
  $1 = \sum_{j} \alpha^{(n)}_{j} \beta^{(-n)}_{j}$
  and define the right $\RR_{0}$\nbd-linear map
  \begin{displaymath}
    \kappa_{n} \colon \RR_{0} \rTo \RR_{n} \tensor_{\RR_{0}} \RR_{-n} \ ,
    \quad x \mapsto \sum_{j} \alpha_{j}^{(n)} \tensor \beta_{j}^{(-n)}
    x \ .
  \end{displaymath}
  Then we calculate
  \begin{displaymath}
    \kappa_{n} \pi_{n} (x \tensor y) = \sum_{j} \alpha_{j}^{(n)}
    \tensor \beta_{j}^{(-n)} xy = \sum_{j} \alpha_{j}^{(n)}
    \beta_{j}^{(-n)} x \tensor y = x \tensor y
  \end{displaymath}
  (using $\beta_{j}^{(-n)} x \in \RR_{0}$) so that $\pi_{n}$~is
  injective.
\end{proof}

\begin{lemma}
  \label{lem:more-pous}
  Let $\RR = \R$ be a $\bZ$\nbd-graded ring, and let
  $1 = \sum_{i} \alpha^{(m)}_{i} \beta^{(-m)}_{i}$ and
  $1 = \sum_{j} \bar \alpha^{(n)}_{j} \bar \beta^{(-n)}_{j}$ be %
  two %!
  partitions of unity of types $(m,-m)$ and~$(n,-n)$,
  respectively. Then
    \begin{displaymath}
      1 = \sum_{i,j} \big(\alpha^{(m)}_{i} \alpha^{(n)}_{j} \big)
      \cdot \big(\beta^{(-n)}_{j} \beta^{(-m)}_{i} \big)
    \end{displaymath}
    is a partition of unity of type~$(m+n, -m-n)$.\qed
\end{lemma}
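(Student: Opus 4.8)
The plan is to verify directly that the proposed expression is a finite sum of the required shape and that it evaluates to~$1$. First I would record the two things that need checking: that each summand $\bigl(\alpha^{(m)}_{i} \bar\alpha^{(n)}_{j}\bigr)\cdot\bigl(\bar\beta^{(-n)}_{j}\beta^{(-m)}_{i}\bigr)$ is a product of an element of degree~$m+n$ with an element of degree~$-m-n$, and that the total sum equals~$1$. The first point is immediate from the grading of~$\RR$: since $\alpha^{(m)}_{i}\in\RR_{m}$ and $\bar\alpha^{(n)}_{j}\in\RR_{n}$ we have $\alpha^{(m)}_{i}\bar\alpha^{(n)}_{j}\in\RR_{m}\RR_{n}\subseteq\RR_{m+n}$, and similarly $\bar\beta^{(-n)}_{j}\beta^{(-m)}_{i}\in\RR_{-n}\RR_{-m}\subseteq\RR_{-m-n}$. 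As the index set $\{(i,j)\}$ is finite, the expression is therefore a legitimate candidate for a partition of unity of type~$(m+n,-m-n)$.

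For the second point I would use associativity of multiplication in~$\RR$ to regroup the double sum, pulling the second partition of unity out of the middle:
\[
  \sum_{i,j}\bigl(\alpha^{(m)}_{i}\bar\alpha^{(n)}_{j}\bigr)\bigl(\bar\beta^{(-n)}_{j}\beta^{(-m)}_{i}\bigr)
  = \sum_{i}\alpha^{(m)}_{i}\Bigl(\sum_{j}\bar\alpha^{(n)}_{j}\bar\beta^{(-n)}_{j}\Bigr)\beta^{(-m)}_{i}
  = \sum_{i}\alpha^{(m)}_{i}\cdot 1\cdot\beta^{(-m)}_{i}
  = 1 \ ,
\]
where the last equality is the hypothesis that $1=\sum_{i}\alpha^{(m)}_{i}\beta^{(-m)}_{i}$ is a partition of unity of type~$(m,-m)$.

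There is no genuine obstacle here; this is a bookkeeping lemma whose only content is that a product of a degree-$m$ and a degree-$n$ element has degree~$m+n$, so that ``composing'' partitions of unity again yields a partition of unity. The one point to be mindful of is the order of the factors in the negative part: one must write $\bar\beta^{(-n)}_{j}\beta^{(-m)}_{i}$ rather than $\beta^{(-m)}_{i}\bar\beta^{(-n)}_{j}$, precisely so that after reassociation the middle factor is $\sum_{j}\bar\alpha^{(n)}_{j}\bar\beta^{(-n)}_{j}=1$ and the computation telescopes. Once the factors are arranged this way the claim drops out, and combined with the trivial partition of unity $1=1\cdot 1$ of type~$(0,0)$ it gives, by induction on~$|n|$, that a $\bZ$-graded ring admitting partitions of unity of types~$(1,-1)$ and~$(-1,1)$ admits one of every type~$(n,-n)$.
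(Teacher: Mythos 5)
Your argument is correct and is precisely the one the paper intends — the lemma is stated with a \qed{} and no written proof, so the authors also regard the reassociation $\sum_{i,j}(\alpha^{(m)}_{i}\bar\alpha^{(n)}_{j})(\bar\beta^{(-n)}_{j}\beta^{(-m)}_{i})=\sum_{i}\alpha^{(m)}_{i}\bigl(\sum_{j}\bar\alpha^{(n)}_{j}\bar\beta^{(-n)}_{j}\bigr)\beta^{(-m)}_{i}=1$ together with the degree check as immediate. You have also, correctly, restored the bars on the second partition in the displayed conclusion (the paper's statement drops them, which is a typographical slip), and your closing remark about induction on $|n|$ is exactly the content of Corollary~\ref{cor:pou-strong}.
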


\begin{corollary}
  \label{cor:pou-strong}
  Partitions of unity of types~$(1,-1)$ and~$(-1,1)$ exist within the
  $\bZ$\nbd-graded ring~$\R$ if and only if it is strongly
  $\bZ$\nbd-graded. \qed
\end{corollary}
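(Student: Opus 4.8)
The plan is to handle the two implications separately. The forward implication is immediate: if $\R$ is strongly $\bZ$\nbd-graded then, by definition, a partition of unity of type $(n,-n)$ exists for \emph{every} $n \in \bZ$, in particular for $n = 1$ and $n = -1$. So there is nothing to prove in that direction.

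For the reverse implication, suppose we are handed partitions of unity of types $(1,-1)$ and $(-1,1)$; the task is to manufacture one of type $(n,-n)$ for each $n \in \bZ$, since by the equivalence recorded in Section~\ref{sec:strongly_graded_rings} this is exactly the condition that all the multiplication maps $\pi_{n}$ of \eqref{eq:mult} be surjective. I would argue by induction on $|n|$. The base case $n = 0$ is covered by the trivial expression $1 = 1 \cdot 1$, with both factors the unit element $1 \in \RR_{0}$, which is a partition of unity of type $(0,0)$. For the inductive step with $n \geq 1$: assuming a partition of unity of type $(n-1, -(n-1))$ has been produced, apply Lemma~\ref{lem:more-pous} with $m = n-1$, combining it with the given partition of unity of type $(1,-1)$, to obtain one of type $(m+1, -m-1) = (n,-n)$. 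For $n \leq -1$ the argument is symmetric: combine an already\nbd-constructed partition of unity of type $(n+1, -(n+1))$ with the given one of type $(-1,1)$ via Lemma~\ref{lem:more-pous}, landing in type $\big((n+1) + (-1),\, -(n+1) + 1\big) = (n,-n)$.

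There is no genuine obstacle here; Lemma~\ref{lem:more-pous} does all the work, and the only points requiring a moment's care are the degenerate base case $n = 0$ and bookkeeping of signs, so that in the two inductive branches the types of the ingredients really do add up to the desired $(n,-n)$.
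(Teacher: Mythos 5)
Your proof is correct and is precisely the argument the paper is implicitly leaving to the reader (the corollary carries a bare \qed, signalling it is immediate from Lemma~\ref{lem:more-pous}). The forward direction is definitional, and your induction on $|n|$ via repeated application of Lemma~\ref{lem:more-pous}, with the sign bookkeeping you carry out, is the intended argument.
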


By direct calculation, similar to the proof of
Lemma~\ref{lem:pi_n-is-iso} above, one verifies:

\begin{lemma}
  \label{lem:mult_iso}
  Suppose that $ \RR = \R$ is a strongly $\bZ$\nbd-graded ring, and
  let $m \in \bZ$. The multiplication map
  \begin{displaymath}
    t^{-m} \Rpp \tensor_{\Rpp} \R \rTo \R \ , \quad x \tensor y
    \mapsto xy
  \end{displaymath}
  is an isomorphism of $\Rpp$-$\R$-bimodules, with inverse given by
  \begin{displaymath}
    z \mapsto \sum_{j} \alpha_{j}^{(-m)} \tensor \beta_{j}^{(m)} z
  \end{displaymath}
  for a partition of unity $1 = \sum_{j} \alpha_{j}^{(-m)}
  \beta_{j}^{(m)}$ of type~$(-m,m)$.\qed
\end{lemma}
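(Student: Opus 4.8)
The plan is to verify by direct calculation, in the spirit of Lemma~\ref{lem:pi_n-is-iso}, that the multiplication map $\mu \colon t^{-m}\Rpp \tensor_{\Rpp}\R \rTo \R$, $x\tensor y\mapsto xy$, and the stated map $\nu \colon \R \rTo t^{-m}\Rpp\tensor_{\Rpp}\R$, $z\mapsto \sum_j \alpha_j^{(-m)}\tensor\beta_j^{(m)}z$, are mutually inverse. First one records that $\mu$ is a well-defined homomorphism of $\Rpp$\nbd-$\R$\nbd-bimodules: it respects the defining relation of the tensor product since $(xp)y = x(py)$ for $p\in\Rpp$, it is plainly additive, left $\Rpp$\nbd-linear and right $\R$\nbd-linear, and it lands in~$\R$ because $t^{-m}\Rpp\subseteq\R$. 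The formula for $\nu$ defines a well-defined additive map once a partition of unity $1 = \sum_j \alpha_j^{(-m)}\beta_j^{(m)}$ of type~$(-m,m)$ is fixed, with $\alpha_j^{(-m)}\in\RR_{-m}\subseteq t^{-m}\Rpp$ and $\beta_j^{(m)}\in\RR_{m}\subseteq\R$; such a partition exists exactly because $\RR$ is strongly graded, by surjectivity of~$\pi_{-m}$ in~\eqref{eq:mult} (equivalently, by Corollary~\ref{cor:pou-strong} together with Lemma~\ref{lem:more-pous}). Since $\mu$ is already a bimodule map, it then suffices to check that $\mu$ and $\nu$ are two-sided inverses as maps of abelian groups; the bimodule\nbd-linearity of~$\nu$ and its independence of the chosen partition are then automatic, the latter because a bijection has a unique two-sided inverse.

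The composite $\mu\nu$ is at once the identity: from the defining relation of the partition of unity, together with $\alpha_j^{(-m)}\beta_j^{(m)}\in\RR_{0}$, one gets for every $z\in\R$
\begin{displaymath}
  \mu\nu(z) = \sum_j \alpha_j^{(-m)}\beta_j^{(m)}z = \Big(\sum_j \alpha_j^{(-m)}\beta_j^{(m)}\Big)z = z \ .
\end{displaymath}

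The composite $\nu\mu$ carries the only substantive point of the argument, which is a grading bookkeeping observation: for $x\in t^{-m}\Rpp = \bigoplus_{k\geq -m}\RR_{k}$ (see~\eqref{eq:t_n-R}) and $\beta_j^{(m)}\in\RR_{m}$ one has $\beta_j^{(m)}x\in\bigoplus_{k\geq 0}\RR_{k} = \Rpp$, so $\beta_j^{(m)}x$ may legitimately be moved across the tensor symbol, which is formed over~$\Rpp$. Thus, for $x\in t^{-m}\Rpp$ and $y\in\R$,
\begin{displaymath}
  \nu\mu(x\tensor y) = \sum_j \alpha_j^{(-m)}\tensor\beta_j^{(m)}xy
  = \sum_j \alpha_j^{(-m)}\big(\beta_j^{(m)}x\big)\tensor y
  = \Big(\sum_j\alpha_j^{(-m)}\beta_j^{(m)}\Big)x\tensor y = x\tensor y \ .
\end{displaymath}
I do not anticipate any genuine obstacle here; the entire content is the elementary fact that left multiplication by a degree\nbd-$m$ element carries $t^{-m}\Rpp$ into~$\Rpp$, which is exactly what makes the rearrangement of the tensor product valid --- the same mechanism by which surjectivity of the multiplication map supplied the inverse in Lemma~\ref{lem:pi_n-is-iso}.
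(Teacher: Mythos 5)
Your proof is correct and is precisely the ``direct calculation, similar to the proof of Lemma~\ref{lem:pi_n-is-iso}'' that the paper alludes to but omits. You have correctly isolated the one non-trivial point --- that $\beta_j^{(m)}x\in\Rpp$ when $x\in t^{-m}\Rpp$, licensing the move across the $\Rpp$\nbd-tensor --- which is exactly the mechanism the paper relies on.
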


Note that the inverse is independent from the choice of partition of
unity (since the multiplication map is). --- For later use, we
record an important categorical property of strongly $\bZ$\nbd-graded
rings:

\begin{lemma}
  \label{lem:incl_is_epi}
  Let $\RR = \R$ be a strongly $\bZ$\nbd-graded ring. The inclusion
  $\beta \colon \Rpp \rTo^{\subset} \R$ is an epimorphism in the
  category of (unital) rings.
\end{lemma}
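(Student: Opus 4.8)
The plan is to apply the standard criterion characterising ring epimorphisms: a unital ring homomorphism $f \colon A \rTo B$ is an epimorphism in the category of unital rings if and only if the multiplication map $\mu \colon B \tensor_{A} B \rTo B$ is an isomorphism (equivalently, if and only if $b \tensor 1 = 1 \tensor b$ in $B \tensor_{A} B$ for every $b \in B$). For $\beta$ this reduces matters to showing that the multiplication map $\mu \colon \R \tensor_{\Rpp} \R \rTo \R$ is an isomorphism.

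Surjectivity is clear, as $\mu(1 \tensor w) = w$ for all $w \in \R$. For injectivity I would exploit that $\R$ is the directed union of the sub\nbd-bimodules $t^{-m} \Rpp = \bigoplus_{k \geq -m} \RR_{k}$, $m \geq 0$, appearing in~\eqref{eq:t_n-R}. Given $\xi \in \R \tensor_{\Rpp} \R$, write it as a finite sum $\xi = \sum_{i} x_{i} \tensor y_{i}$; since each $x_{i}$ has only finitely many non\nbd-zero graded components, there is an $m \geq 0$ with $x_{i} \in t^{-m} \Rpp$ for all~$i$, so that $\xi$ lies in the image of the canonical map $\iota_{m} \colon t^{-m} \Rpp \tensor_{\Rpp} \R \rTo \R \tensor_{\Rpp} \R$, say $\xi = \iota_{m}(\eta)$. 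The composite $\mu \circ \iota_{m}$ is precisely the multiplication map $t^{-m} \Rpp \tensor_{\Rpp} \R \rTo \R$, which is an isomorphism by Lemma~\ref{lem:mult_iso}; hence $\mu(\xi) = 0$ implies $\eta = 0$ and therefore $\xi = \iota_{m}(\eta) = 0$. Thus $\mu$ is injective, hence bijective, hence an isomorphism of rings, and $\beta$ is an epimorphism.

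I do not expect a serious obstacle: essentially all the work sits in Lemma~\ref{lem:mult_iso}, which is where the strongly graded hypothesis enters, and the remaining inputs --- the characterisation of epimorphisms and the decomposition $\R = \bigcup_{m \geq 0} t^{-m} \Rpp$ --- are routine. The only point worth stating carefully is that the argument uses nothing about the maps $\iota_{m}$ beyond the fact that $\mu$ factors through the isomorphism $\mu \circ \iota_{m}$ of Lemma~\ref{lem:mult_iso}; in particular one never needs $\iota_{m}$ itself to be injective. One could phrase the same computation functorially, by noting that $- \tensor_{\Rpp} \R$ carries the directed union $\R = \bigcup_{m} t^{-m} \Rpp$ to a presentation of $\R \tensor_{\Rpp} \R$ as the directed colimit of the modules $t^{-m} \Rpp \tensor_{\Rpp} \R$, with $\mu$ realised as the colimit of the isomorphisms of Lemma~\ref{lem:mult_iso}, and conclude at once.
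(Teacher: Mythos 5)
Your proof is correct, but it takes a genuinely different route from the paper's. The paper verifies the epimorphism property by hand: given $f,g \colon \R \to S$ with $f\beta = g\beta$, it shows $f=g$ on a homogeneous element $x$ of negative degree~$k$ by inserting a partition of unity $1 = \sum_j \alpha_j^{(k)}\beta_j^{(-k)}$ and rewriting
$f(x) = \sum_j g(\alpha_j^{(k)})\, f(\beta_j^{(-k)}x)$, then collapsing back to $g(x)$ because $\beta_j^{(-k)}$ and $\beta_j^{(-k)}x$ both lie in $\Rpp$. You instead invoke the standard tensor criterion for ring epimorphisms and reduce to showing that the multiplication map $\mu \colon \R \tensor_{\Rpp} \R \to \R$ is an isomorphism, which you obtain from the filtration $\R = \bigcup_{m\geq 0} t^{-m}\Rpp$ together with Lemma~\ref{lem:mult_iso}. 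Both arguments ultimately rest on partitions of unity, but yours makes the dependence indirect, delegating the work to Lemma~\ref{lem:mult_iso}, whereas the paper's computation is self-contained and cites nothing. Your version does prove something slightly stronger in passing --- the module-theoretic fact $\R \tensor_{\Rpp} \R \iso \R$, which together with $\R$ being a direct limit of the finitely generated projective modules $t^{-m}\Rpp$ exhibits $\beta$ as a \emph{flat} epimorphism; the paper's version has the advantage of not presupposing the tensor criterion. Your remark that injectivity of the canonical maps $\iota_m$ is not needed is accurate (though it holds anyway, as $\mu\circ\iota_m$ being injective forces it), and the colimit phrasing in your final sentence is a clean way to package the same computation.
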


\begin{proof}
  Let $f,g \colon \R \rTo S$ be ring homomorphisms satisfying the
  equality $f\beta = g\beta$. We need to show $f=g$. For this, let
  $x \in \RR_{k}$ be homogeneous of degree $k \in \bZ$. If $k \geq 0$
  we have $f(x) = f\beta(x) = g\beta(x) = g(x)$.  Otherwise, choose a
  partition of unity $1 = \sum_{j} \alpha_{j}^{(k)} \beta_{j}^{(-k)}$
  of type~$(k,-k)$. Then $\beta_{j}^{(-k)}$ and $\beta_{j}^{(-k)} x$
  lie in~$\Rpp$. Thus $f(\beta_{j}^{(-k)} x) = g(\beta_{j}^{(-k)} x)$,
  and we calculate \goodbreak
  \begin{multline*}
    f(x)  = g(1) \cdot f(x) \\
    = \sum_{j} g (\alpha_{j}^{(k)} \beta_{j}^{(-k)}) \cdot f(x) =
    \sum_{j} g (\alpha_{j}^{(k)}) \cdot g(\beta_{j}^{(-k)}) \cdot
    f(x) \\
    = \sum_{j} g (\alpha_{j}^{(k)}) \cdot f(\beta_{j}^{(-k)}) \cdot
    f(x)
    = \sum_{j} g (\alpha_{j}^{(k)}) \cdot f(\beta_{j}^{(-k)}x) \\
    = \sum_{j} g (\alpha_{j}^{(k)}) \cdot g(\beta_{j}^{(-k)}x) =
    \sum_{j} g (\alpha_{j}^{(k)}\beta_{j}^{(-k)}x) = g(x) \ .
  \end{multline*}
\end{proof}

\subsection*{Finiteness properties of strongly graded rings}

The homogeneous components of strongly graded rings are finitely
generated projective modules over the degree\nbd-$0$ subring.

\begin{lemma}
  \label{lem:strong_finiteness}
  Suppose that $\RR = \R$ is a $\bZ$\nbd-graded ring that admits a
  partition of unity of type~$(1,-1)$. Then for all
  $n \geq 1$,\goodbreak
  \begin{itemize}
  \item $\RR_{n}$ is finitely generated projective as a \emph{right}
    $\RR_{0}$\nbd-module;
  \item $\RR_{-n}$ is finitely generated projective as a \emph{left}
    $\RR_{0}$\nbd-module.
  \end{itemize}
  Similarly, if $\RR = \R$ admits a partition of unity of
  type~$(1,-1)$, then for all $n \geq 1$,
  \begin{itemize}
  \item $\RR_{n}$ is finitely generated projective as a \emph{left}
    $\RR_{0}$\nbd-module;
  \item $\RR_{-n}$ is finitely generated projective as a \emph{right}
    $\RR_{0}$\nbd-module.
  \end{itemize}
\end{lemma}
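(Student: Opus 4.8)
The plan is to exhibit each homogeneous component as a direct summand of a finitely generated free $\RR_{0}$-module, by writing down an explicit splitting manufactured from a partition of unity; this is the ``direct calculation, similar to the proof of Lemma~\ref{lem:pi_n-is-iso}'' promised in the text.

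First I would iterate Lemma~\ref{lem:more-pous} to promote the given partition of unity of type~$(1,-1)$ to one of type~$(n,-n)$ for each $n \geq 1$; write it as $1 = \sum_{j=1}^{N} \alpha_{j} \beta_{j}$ with $\alpha_{j} \in \RR_{n}$ and $\beta_{j} \in \RR_{-n}$. To show that $\RR_{n}$ is finitely generated projective as a \emph{right} $\RR_{0}$-module, I would consider the right $\RR_{0}$-linear maps $p \colon \RR_{0}^{N} \rTo \RR_{n}$, $(x_{j})_{j} \mapsto \sum_{j} \alpha_{j} x_{j}$, and $s \colon \RR_{n} \rTo \RR_{0}^{N}$, $z \mapsto (\beta_{j} z)_{j}$; the second is well defined since $\beta_{j} \in \RR_{-n}$ and $z \in \RR_{n}$ force $\beta_{j} z \in \RR_{0}$. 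As $p \circ s(z) = \sum_{j} \alpha_{j} \beta_{j} z = z$, the map $s$ is a split monomorphism, so $\RR_{n}$ is a direct summand of~$\RR_{0}^{N}$, hence finitely generated projective. For $\RR_{-n}$ as a \emph{left} $\RR_{0}$-module one argues symmetrically with the left $\RR_{0}$-linear maps $(x_{j})_{j} \mapsto \sum_{j} x_{j} \beta_{j}$ and $w \mapsto (w \alpha_{j})_{j}$, whose composite sends $w$ to $w \sum_{j} \alpha_{j} \beta_{j} = w$.

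The second half of the statement follows in the same way once one starts instead from a partition of unity of type~$(-1,1)$ and promotes it to type~$(-n,n)$ via Lemma~\ref{lem:more-pous}; this interchanges the roles of $\RR_{n}$ and $\RR_{-n}$, and of ``left'' and ``right'', throughout. I do not expect a genuine obstacle: the only thing requiring attention is the bookkeeping of which side carries each module structure and the verification that each map written down lands in the advertised homogeneous component. Equivalently, one may phrase the conclusion by observing that $s \circ p$ is an idempotent endomorphism of $\RR_{0}^{N}$ with image isomorphic to the component in question.
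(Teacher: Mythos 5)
Your proof is correct and is essentially the paper's proof in a different guise: the paper phrases the construction via the dual basis lemma with dual basis $\big(\alpha_j,\, f_j\big)$ where $f_j(x)=\beta_j x$, while you package the same two maps as a split monomorphism $s\colon \RR_n\to\RR_0^N$ with retraction $p$. (Incidentally, you also correctly read the second half of the statement as requiring a partition of unity of type $(-1,1)$, which appears to be a typo in the paper's statement.)
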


\begin{proof}
  Let $n \geq 1$, and let
  $1 = \sum_{j} \alpha_{j}^{(n)} \beta_{j}^{(-n)}$ be a partition of
  unity of type~$(n,-n)$ (existence is guaranteed by
  Lemma~\ref{lem:more-pous}). Define
  \begin{displaymath}
    f_{j} \colon \RR_{n} \rTo \RR_{0} \ , \quad x \mapsto
    \beta_{j}^{(-1)} x \ .
  \end{displaymath}
  The maps $f_{j}$ are right $\RR_{0}$\nbd-linear, and for all $x \in
  \RR_{n}$ we calculate
  \begin{displaymath}
    \sum_{j} \alpha_{j}^{(1)} \cdot f_{j} (x) = \sum_{j}
    \alpha_{j}^{(n)} \beta_{j}^{(-n)} x = x
  \end{displaymath}
  so that $\big( \alpha_{j}^{(1)},\, f_{j} \big)$ is a dual basis
  for~$\RR_{n}$. It follows that $\RR_{n}$ is a finitely generated
  projective right $\RR_{0}$\nbd-module by the dual basis lemma. ---
  All the remaining claims are proved in a similar manner.
\end{proof}

\begin{corollary}
  \label{cor:strong-means-fgp}
  Suppose that $\RR = \R$ is a strongly $\bZ$\nbd-graded ring.
  \begin{enumerate}
  \item For all $n \in \bZ$, the homogeneous component $\RR_{n}$
    of~$\R$ is a finitely generated projective left
    $\RR_{0}$\nbd-module and a finitely generated projective right
    $\RR_{0}$\nbd-module; in fact, $\RR_{n}$ is an invertible
    $\RR_{0}$\nbd-bimodule.
  \item If $M$ is a projective (left or right) $\R$\nbd-module, then
    $M$ is a projective (left or right) $\RR_{0}$\nbd-module (with
    module structure given by restriction of scalars). Similarly, any
    projective left or right module over $\Rpp$ or~$\Rnn$ is a
    projective $\RR_{0}$\nbd-module.
  \item There exists an isomorphism
    $\RR_{-m} \tensor_{\RR_{0}} \Rpp \iso \Rp {-m}$ of finitely
    generated projective right $\Rpp$\nbd-modules, for every
    $m \in \bZ$. Similarly, there exists an isomorphism
    $\RR_{m} \tensor_{\RR_{0}} \Rnn \iso \Rn {m}$ of finitely
    generated projective right $\Rnn$\nbd-modules.
  \item For all $m \in \bZ$, the module $\Rp {-m}$ is an invertible
    $\Rpp$\nbd-bi\-module, and hence is finitely generated projective
    as a left and right $\Rpp$\nbd-module. Similarly, $\Rn {m}$ is an
    invertible $\Rnn$\nbd-bi\-module, and hence is finitely generated
    projective as a left and right $\Rnn$\nbd-module.
  \end{enumerate}
\end{corollary}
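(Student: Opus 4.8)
The plan is to establish the four assertions in order; in each case the work will reduce to an explicit manipulation of partitions of unity of the kind already carried out in Lemmas~\ref{lem:pi_n-is-iso} and~\ref{lem:mult_iso}. For~(1), the case $n = 0$ is trivial as $\RR_{0}$ is free of rank one over itself. For $n \geq 1$, Corollary~\ref{cor:pou-strong} supplies partitions of unity of types $(1,-1)$ and $(-1,1)$, so both halves of Lemma~\ref{lem:strong_finiteness} apply and give that $\RR_{n}$ and $\RR_{-n}$ are finitely generated projective as left and as right $\RR_{0}$\nbd-modules; since $n$ was arbitrary this settles all homogeneous components. Invertibility then follows from Lemma~\ref{lem:pi_n-is-iso}: the multiplication maps $\pi_{n} \colon \RR_{n} \tensor_{\RR_{0}} \RR_{-n} \rTo \RR_{0}$ and $\pi_{-n} \colon \RR_{-n} \tensor_{\RR_{0}} \RR_{n} \rTo \RR_{0}$ are isomorphisms of $\RR_{0}$\nbd-bimodules, exhibiting $\RR_{-n}$ as a two-sided inverse of~$\RR_{n}$.

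For~(2): a projective right $\R$\nbd-module $M$ is a retract of a free module $\bigoplus_{I} \R$; restricting scalars along $\RR_{0} \subseteq \R$ and using that $\R = \bigoplus_{k \in \bZ} \RR_{k}$ is, by~(1), a direct sum of finitely generated projective $\RR_{0}$\nbd-modules, the free module — hence also its retract~$M$ — is projective over~$\RR_{0}$. The identical argument works for left modules, and with $\Rpp = \bigoplus_{k \geq 0} \RR_{k}$ or $\Rnn = \bigoplus_{k \leq 0} \RR_{k}$ in place of~$\R$. For~(3): a degree count shows that multiplication defines a right $\Rpp$\nbd-linear map $\mu \colon \RR_{-m} \tensor_{\RR_{0}} \Rpp \rTo \Rp{-m}$ (note $\RR_{-m} \cdot \RR_{k} \subseteq \RR_{k-m}$ with $k - m \geq -m$ whenever $k \geq 0$), and that for a partition of unity $1 = \sum_{j} \alpha^{(-m)}_{j} \beta^{(m)}_{j}$ of type $(-m,m)$ the assignment $z \mapsto \sum_{j} \alpha^{(-m)}_{j} \tensor \beta^{(m)}_{j} z$ — which makes sense since $\beta^{(m)}_{j} z$ lies in~$\Rpp$ for $z \in \Rp{-m}$ — is a two-sided inverse of~$\mu$, by a computation formally identical to those in Lemmas~\ref{lem:pi_n-is-iso} and~\ref{lem:mult_iso}. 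As $\RR_{-m}$ is finitely generated projective over~$\RR_{0}$ by~(1), the base change $\RR_{-m} \tensor_{\RR_{0}} \Rpp \iso \Rp{-m}$ is finitely generated projective over~$\Rpp$; the statement about~$\Rn{m}$ is symmetric.

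For~(4): running the argument of~(3) with left modules and a partition of unity of type $(m,-m)$ shows in addition that $\Rp{-m}$ is finitely generated projective as a \emph{left} $\Rpp$\nbd-module. To see it is invertible I would take $\Rp{m}$ as the candidate inverse bimodule and check — again by a degree count together with a partition-of-unity calculation — that the two multiplication maps $\Rp{-m} \tensor_{\Rpp} \Rp{m} \rTo \Rpp$ and $\Rp{m} \tensor_{\Rpp} \Rp{-m} \rTo \Rpp$ are isomorphisms of $\Rpp$\nbd-bimodules. The claim for~$\Rnn$ follows symmetrically.

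I do not expect any genuine obstacle here: the corollary is essentially an exercise in bookkeeping with partitions of unity. The points needing care are keeping track of which tensor factor each homogeneous component occupies, checking that the relevant products land in the asserted half-line ranges of degrees, and — for~(4) — exhibiting an honest \emph{bimodule} inverse rather than settling for one-sided projectivity. One could streamline~(3) and~(4) by first recording that $\RR_{a} \tensor_{\RR_{0}} \RR_{b} \rTo \RR_{a+b}$ is an isomorphism for all $a, b \in \bZ$, but the direct approach is self-contained and uses nothing beyond what has already been proved.
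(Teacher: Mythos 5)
Your proposal is correct and follows essentially the same route as the paper: statement~(1) from Lemma~\ref{lem:strong_finiteness} via Corollary~\ref{cor:pou-strong}, with invertibility read off from $\pi_{n}$ and $\pi_{-n}$ being bimodule isomorphisms (Lemma~\ref{lem:pi_n-is-iso}); (2) by restriction of scalars along a direct-sum decomposition into the f.g.p.\ pieces supplied by~(1); (3) by exhibiting $z \mapsto \sum_{j} \alpha_{j}^{(-m)} \tensor \beta_{j}^{(m)} z$ as a two-sided inverse of the multiplication map, exactly the paper's calculation; and (4) by identifying $\Rp{m}$ as the inverse $\Rpp$-bimodule of~$\Rp{-m}$. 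The only difference is that you spell out the deductions the paper compresses into one sentence for~(1) and~(2), and you name the partition-of-unity types explicitly throughout, both of which are harmless elaborations.
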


\begin{proof}
  Statements~(1) and~(2) follow from Lemma~\ref{lem:pi_n-is-iso},
  Corollary~\ref{cor:pou-strong} and
  Lemma~\ref{lem:strong_finiteness}. To prove~(3) it is enough, in
  view of~(1), to establish the isomorphism. Let
  $1 = \sum_{j} \alpha_{j}^{(-m)} \beta_{j}^{(m)}$ be a partition of
  unity of type~$(-m,m)$. Then the multiplication map
  $\pi \colon \RR_{-m} \tensor_{\RR_{0}} \Rpp \rTo^{\iso} t^{-m}
  \Rpp$,
  sending $x \tensor y$ to~$xy$, has inverse given by
  $\rho \colon z \mapsto \sum_{j} \alpha_{j}^{(-m)} \tensor
  \beta_{j}^{(m)} z$.
  Indeed, by straightforward calculation,
  $\pi\rho(z) = \sum_{j} \alpha_{j}^{(-m)} \beta_{j}^{(m)} z = z$ and
  \begin{displaymath}
    \rho \pi (x \tensor y) = \sum_{j} \alpha_{j}^{(-m)} \tensor
  \beta_{j}^{(m)} xy = \sum_{j} \alpha_{j}^{(-m)} \beta_{j}^{(m)} x
  \tensor y = x \tensor y
  \end{displaymath}
  since $\beta_{j}^{(m)} x \in \RR_{0}$. --- The proof of~(4) is
  similar, using partitions of unity to show that $\Rp {m}$ is the
  inverse $\Rpp$\nbd-bimodule of $\Rp {-m}$.
\end{proof}

\section{Proto-null homotopies and proto-contractions}

Let $C$ and~$C'$ be chain complexes of right modules over the unital
ring~$K$, with differentials $d = d_{k} \colon C_{k} \rTo C_{k-1}$ and
$d'= d'_{k}$. A \textit{proto-contraction} of~$C$ consists of module
homomorphisms $s = s_{k} \colon C_{k} \rTo C_{k+1}$ such that
$ds+sd \colon C_{k} \rTo C_{k}$ is an automorphism of~$C_{k}$ for all
$k \in \bZ$. Let $g \colon C \rTo C'$ be a chain map; a
\textit{proto-null homotopy} consists of module homomorphisms
$t = t_{k} \colon C_{k} \rTo C'_{k+1}$ such that
$d't+td \colon C_{k} \rTo C'_{k}$ is an isomorphism for all
$k \in \bZ$.

\begin{lemma}
  \label{lem:proto}
  A chain complex~$C$ admits a proto-contraction if and only if it is
  contractible. A chain isomorphism $C \rTo C'$ admits a proto-null
  homotopy if and only if $C$ is contractible.
\end{lemma}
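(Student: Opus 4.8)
The plan is to reduce both equivalences to one observation: if $s = (s_{k})$ is a proto-contraction of a complex~$C$, then the map $h := ds + sd$ is not just a degreewise automorphism but a \emph{chain map}, because $d^{2} = 0$ gives
\[
  dh = d(ds + sd) = dsd = (ds + sd)d = hd \ .
\]
Being invertible in each degree, $h$ is then an automorphism of the chain complex~$C$, and its degreewise inverse $h\inv$ is automatically a chain map as well, so $dh\inv = h\inv d$. With this in hand the two ``trivial'' implications are immediate: a genuine chain contraction $s'$ of a contractible~$C$ satisfies $ds' + s'd = \id$, and $\id$ is a degreewise automorphism, so $s'$ is in particular a proto-contraction; and if $g \colon C \rTo C'$ is a chain isomorphism with $C$ (hence $C'$) contractible, one picks a chain contraction $t'$ of~$C'$ and checks, using $gd = d'g$, that $t := t'g$ satisfies $d't + td = (d't' + t'd')g = g$, a degreewise isomorphism; so $t$ is a proto-null homotopy of~$g$.

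For the converse of the first assertion I would start from a proto-contraction $s$, form the chain automorphism $h = ds + sd$, and set $s' := h\inv s$; then $dh\inv = h\inv d$ yields $ds' + s'd = h\inv(ds + sd) = h\inv h = \id$, so $s'$ is an honest chain contraction and $C$ is contractible. For the converse of the second assertion, I would take a chain isomorphism $g \colon C \rTo C'$ admitting a proto-null homotopy~$t$, and put $\phi := d't + td$. The same computation as above --- now using $d^{2} = 0 = (d')^{2}$ --- shows that $\phi$ is a chain map $C \rTo C'$, and it is a degreewise isomorphism by hypothesis. Setting $s := g\inv t$ and using $g\inv d' = dg\inv$ (which follows from $gd = d'g$) gives $ds + sd = g\inv(d't + td) = g\inv\phi$, a degreewise automorphism of~$C$; hence $s$ is a proto-contraction of~$C$, and the first part of the lemma then shows $C$ is contractible.

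There is no serious obstacle here; the only point one must get right --- and it is the conceptual content of the notion of a proto-contraction --- is the remark that $ds + sd$ always commutes with the differential. This is exactly what upgrades the degreewise invertibility hypothesis to invertibility \emph{as a chain map}, and hence makes the correcting factor $h\inv$ (respectively $g\inv$) available. Everything else is routine index bookkeeping, which I would suppress.
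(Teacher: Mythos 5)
Your proof is correct, and the key observation --- that $h = ds+sd$ commutes with $d$ because $dsd = d(ds+sd) = (ds+sd)d$, and therefore is a chain automorphism whose degreewise inverse is again a chain map --- is precisely the content of the Ranicki proof the paper cites for the first statement; setting $s' = h\inv s$ (or $s h\inv$) is the standard way to upgrade the proto-contraction to an honest one. Your treatment of the second statement, reducing to the first via $s := g\inv t$, is exactly the ``variation of the theme'' the paper alludes to.
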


\begin{proof}
  For the first statement, see \textsc{Ranicki} \cite[Proof of
  Proposition~9.12]{Ranicki-knot}, the second statement is a variation
  of the theme.
\end{proof}

Given a ring homomorphism $f \colon K \rTo S$, the family of
maps~$s_{k}$ is called an \textit{$f$-proto-contraction\/} if the maps
$s_{k} \tensor \id$ form a proto-contraction of the induced complex
$f_{*}(C) = C \tensor_{K} S$. Similarly, the family of maps~$t_{k}$ is
called an \textit{$f$-proto-null homotopy of~$g$\/} if $g \tensor S$
is a chain isomorphism, and if the maps $s_{k} \tensor \id$ form a
proto-null homotopy of~$g \tensor S$.

\medbreak

We are interested in proto-contractions for the following
reason. Suppose we are given $C$ and $f$ as before, and another ring
homomorphism $g \colon S \rTo T$.  If $f_{*}(C) = C \tensor_{K} S$ is
contractible then
$(gf)_{*}(C) = C \tensor_{K} T \iso C \tensor_{K} S \tensor_{S} T$ is
contractible as well, since taking tensor product preserves
homotopies. If, however, $(gf)_{*}(C)$ is contractible it is not
guaranteed that $f_{*}(C)$ is contractible. In favourable
circumstances, a contraction of $(gf)_{*}(C)$ gives rise to a sequence
of maps $s_{k} \colon C_{k} \rTo C_{k+1}$ which can be shown, thanks
to special properties of the maps $f$ and~$g$, to be an
$f$-proto-contraction.

\section{Remarks on non-commutative localisation}

Let $K$ denote an arbitrary unital, possibly non-commutative ring.
For the reader's convenience we collect some standard facts about
non-com\-muta\-tive localisation

\begin{proposition}
  \label{prop:on_localisation_1}
  Let $\Sigma$ be a set of homomorphisms of finitely generated
  projective $K$\nbd-modules, and let $f \colon K \rTo S$ be a
  $K$\nbd-ring. Write $\lambda_{\Sigma} \colon K \rTo \Sigma\inv K$
  for the non-commutative localisation of~$K$ with respect
  to~$\Sigma$.
  \begin{enumerate}
  \item \label{item:loc-injective} If $f$ is $\Sigma$-inverting and
    injective, then $\lambda_{\Sigma}$ is injective.
  \item \label{item:loc-is-epi} The non-commutative localisation
    $\lambda_{\Sigma} \colon K \rTo \Sigma\inv K$ is an epimorphism in
    the category of unital rings.
  \item \label{item:saturated} Suppose that $\Sigma$ is the set of all
    those square matrices~$M$ with entries in~$K$ such that $f(M)$ is
    invertible over~$S$; we consider a square matrix of size~$k$ as a
    map of finitely generated free modules
    $\mu \colon K^{k} \rTo K^{k}$ so that $f(M)$ represents the
    induced map $\mu \tensor S \colon S^{k} \rTo S^{k}$. Let $A$ be a
    square matrix with entries in~$K$. Then $A \in \Sigma$ if and only
    if $\lambda_{\Sigma}(A)$ is invertible over the
    ring~$\Sigma\inv K$.
  \end{enumerate}
\end{proposition}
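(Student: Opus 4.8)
The plan is to obtain all three assertions formally from the universal property of the non\nbd-commutative localisation, together with one extra observation: $\lambda_{\Sigma}$ is itself $\Sigma$\nbd-inverting, since by definition it is an object (indeed the initial object) of the category of $\Sigma$\nbd-inverting $K$\nbd-rings. Throughout I shall use the universal property in the form: every $\Sigma$\nbd-inverting $K$\nbd-ring $f \colon K \rTo S$ admits a \emph{unique} ring homomorphism $\bar f \colon \Sigma\inv K \rTo S$ with $\bar f \circ \lambda_{\Sigma} = f$.

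For part~(1) I would argue as follows: as $f$ is $\Sigma$\nbd-inverting it factors as $f = \bar f \circ \lambda_{\Sigma}$, and a composite $\bar f \circ \lambda_{\Sigma}$ can only be injective if $\lambda_{\Sigma}$ is; hence $\lambda_{\Sigma}$ is injective. For part~(2), let $g, h \colon \Sigma\inv K \rTo T$ be ring homomorphisms with $g \lambda_{\Sigma} = h \lambda_{\Sigma} =: f'$. For $\sigma \in \Sigma$ the map $\sigma \tensor_{K} \Sigma\inv K$ is an isomorphism, hence so is $\sigma \tensor_{K} T \iso (\sigma \tensor_{K} \Sigma\inv K) \tensor_{\Sigma\inv K} T$ by base change along $g$, and equally along $h$; so $f'$ is $\Sigma$\nbd-inverting. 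Both $g$ and $h$ are then ring homomorphisms $\Sigma\inv K \rTo T$ whose composite with $\lambda_{\Sigma}$ is $f'$, so the uniqueness clause of the universal property forces $g = h$; thus $\lambda_{\Sigma}$ is an epimorphism of unital rings.

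For part~(3): here $\Sigma$ consists only of square matrices, and for such a $\Sigma$ the property of being $\Sigma$\nbd-inverting amounts to carrying every $M \in \Sigma$ to an invertible matrix over~$S$ — which holds by the very definition of $\Sigma$ in this part — so $f$ is $\Sigma$\nbd-inverting and we again have $\bar f \colon \Sigma\inv K \rTo S$ with $\bar f \lambda_{\Sigma} = f$. If $\lambda_{\Sigma}(A)$ is invertible over $\Sigma\inv K$, then $f(A) = \bar f\big(\lambda_{\Sigma}(A)\big)$ is invertible over~$S$ (a ring homomorphism applied entrywise sends an invertible matrix to an invertible matrix), so $A \in \Sigma$. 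Conversely, if $A \in \Sigma$ has size~$k$ and represents $\alpha \colon K^{k} \rTo K^{k}$, then $\alpha \tensor_{K} \Sigma\inv K$ is an isomorphism of free $\Sigma\inv K$\nbd-modules because $\lambda_{\Sigma}$ is $\Sigma$\nbd-inverting; with respect to the standard bases this map is represented by the matrix $\lambda_{\Sigma}(A)$, and an invertible $\Sigma\inv K$\nbd-linear endomorphism of $(\Sigma\inv K)^{k}$ is precisely an invertible matrix, so $\lambda_{\Sigma}(A)$ is invertible over $\Sigma\inv K$.

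I do not anticipate a genuine obstacle: each clause is a formal consequence of the universal property. The two places that need a moment's care are the base\nbd-change identity $\sigma \tensor_{K} T \iso (\sigma \tensor_{K} \Sigma\inv K) \tensor_{\Sigma\inv K} T$ invoked in part~(2), and, in part~(3), the routine dictionary between invertibility of the matrix $\lambda_{\Sigma}(A)$ and invertibility of the induced endomorphism of free modules. Both amount to bookkeeping that is already built into the way $\Sigma$\nbd-inverting and $\Sigma\inv K$ are set up in the introduction.
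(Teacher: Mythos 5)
Your proof is correct and follows essentially the same route as the paper: all three parts are derived from the universal property of the non-commutative localisation, with (1) via the factorisation $f = \bar f \lambda_{\Sigma}$, (2) via noting that the common composition $g\lambda_{\Sigma} = h\lambda_{\Sigma}$ is $\Sigma$-inverting and invoking uniqueness of the factorisation, and (3) via the two-directional matrix argument. The only difference is that you spell out the base-change justification in (2), which the paper leaves implicit.
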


\begin{proof}
  \ref{item:loc-injective}~As $f$ is $\Sigma$\nbd-inverting%
  , it factors as %!
  %there is a factorisation 
  $K \rTo^{\lambda_{\Sigma}} \Sigma\inv K \rTo S$%
  %of~$f$
  . This forces
  $\lambda_{\Sigma}$ to be injective if $f$ is.

  \ref{item:loc-is-epi}~Suppose we have two ring homomorphisms
  $\alpha, \beta \colon \Sigma\inv K \rTo T$ with
  $\alpha \lambda_{\Sigma} = \beta \lambda_{\Sigma}$. This common
  composition is certainly $\Sigma$\nbd-inverting, so factorises
  \textit{uniquely} through~$\lambda_{\Sigma}$. This means precisely
  that $\alpha = \beta$, as required.

  \ref{item:saturated}~Since the map $f$ is $\Sigma$\nbd-invertible,
  it factors as
  $K \rTo^{\lambda_{\Sigma}} \Sigma\inv K \rTo^{\bar f} S$. If $A$ is
  a square matrix in~$\Sigma$ then $\lambda_{\Sigma}(A)$ is invertible
  in~$\Sigma\inv K$, by definition of non-commutative localisation. If
  the square matrix~$A$ with entries in~$K$ is such that
  $\lambda_{\Sigma} (A)$ is invertible, then
  $\bar f \lambda_{\Sigma}(A) = f(A)$ is invertible over~$S$ so that
  $A \in \Sigma$ by the specific choice of~$\Sigma$.
\end{proof}

We will have occasion to use the following construction of pushout
squares:

\begin{proposition}
  \label{prop:pushout_by_loc}
  Let $\Sigma$ be a set of homomorphisms of finitely generated
  projective $K$\nbd-modules, and let $f \colon K \rTo S$ be a ring
  homomorphism. The square in Fig.~\ref{fig:po_square} is a pushout in
  the category of unital rings,
  \begin{figure}[ht]
    \centering
    \begin{diagram}[small]
      K & \rTo^{\lambda_{\Sigma}} & \Sigma\inv K \\ %
      \dTo<{f} & \po & \dTo<{\bar f} \\ %
      S & \rTo[l>=4em]^{\lambda_{f_{*}(\Sigma)}} & f_{*}(\Sigma)\inv
      S %
    \end{diagram}
    \caption{A pushout square in the category of unital rings}
    \label{fig:po_square}
  \end{figure}
  where $f_{*}(\Sigma)$ denotes the set of induced maps
  $\sigma \tensor S \colon P \tensor_{K} S \rTo Q \tensor_{K} S$ with
  $\sigma \colon P \rTo Q$ an element of\/~$\Sigma$. The ring
  homomorphism $\bar f$ is obtained from the universal property
  of~$\lambda_{\Sigma}$ as the composition
  $\lambda_{f_{*}(\Sigma)} \circ f$ is $\Sigma$\nbd-inverting.  --- In
  other words, given ring homomorphisms $\beta \colon S \rTo T$ and
  $\alpha \colon \Sigma\inv K \rTo T$ such that
  $\alpha \circ \lambda_{\Sigma} = \beta \circ f$ there exists a
  uniquely determined ring homomorphism
  $\upsilon \colon f(\Sigma)\inv S \rTo T$ with
  $\beta = \upsilon \circ \lambda_{f(\Sigma)}$ and
  $\alpha = \upsilon \circ \bar f$, cf.~Fig.~\ref{fig:po}.
\end{proposition}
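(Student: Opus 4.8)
The plan is to verify the universal property of the pushout directly, leaning on the universal properties of the two non\nbd-commutative localisations $\lambda_{\Sigma}$ and $\lambda_{f_{*}(\Sigma)}$ together with the fact that $\lambda_{\Sigma}$ is a ring epimorphism (part~\ref{item:loc-is-epi} of Proposition~\ref{prop:on_localisation_1}). First I would record that the square commutes by construction: for $\sigma \colon P \rTo Q$ in~$\Sigma$ one has
\[
  \sigma \tensor_{K} f_{*}(\Sigma)\inv S \;\iso\; (\sigma \tensor_{K} S) \tensor_{S} f_{*}(\Sigma)\inv S \ ,
\]
and $\sigma \tensor_{K} S$ lies in $f_{*}(\Sigma)$, hence becomes invertible after applying $- \tensor_{S} f_{*}(\Sigma)\inv S$; so $\lambda_{f_{*}(\Sigma)} \circ f$ is $\Sigma$\nbd-inverting, and the universal property of $\lambda_{\Sigma}$ produces the asserted $\bar f$ with $\bar f \circ \lambda_{\Sigma} = \lambda_{f_{*}(\Sigma)} \circ f$.

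Now suppose $\beta \colon S \rTo T$ and $\alpha \colon \Sigma\inv K \rTo T$ satisfy $\alpha \circ \lambda_{\Sigma} = \beta \circ f$. The key step is to show that $\beta$ is $f_{*}(\Sigma)$\nbd-inverting. Given $\sigma \colon P \rTo Q$ in~$\Sigma$, the induced map $(\sigma \tensor_{K} S) \tensor_{S} T$ (with $T$ an $S$\nbd-module via~$\beta$) is canonically isomorphic to $\sigma \tensor_{K} T$ with $T$ viewed as a $K$\nbd-module via $\beta \circ f$; but $\beta \circ f = \alpha \circ \lambda_{\Sigma}$, so this $K$\nbd-module structure is the restriction along $\lambda_{\Sigma}$ of the $\Sigma\inv K$\nbd-module structure on~$T$ given by~$\alpha$, whence $\sigma \tensor_{K} T \iso (\sigma \tensor_{K} \Sigma\inv K) \tensor_{\Sigma\inv K} T$. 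Since $\lambda_{\Sigma}$ is $\Sigma$\nbd-inverting, $\sigma \tensor_{K} \Sigma\inv K$ is an isomorphism, and tensoring an isomorphism with anything yields an isomorphism. Thus $\beta$ inverts every element of $f_{*}(\Sigma)$, and the universal property of $\lambda_{f_{*}(\Sigma)}$ supplies a unique ring homomorphism $\upsilon \colon f_{*}(\Sigma)\inv S \rTo T$ with $\upsilon \circ \lambda_{f_{*}(\Sigma)} = \beta$.

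It then remains to check $\upsilon \circ \bar f = \alpha$ and to observe uniqueness. Composing $\upsilon \circ \bar f$ with $\lambda_{\Sigma}$ gives $\upsilon \circ \bar f \circ \lambda_{\Sigma} = \upsilon \circ \lambda_{f_{*}(\Sigma)} \circ f = \beta \circ f = \alpha \circ \lambda_{\Sigma}$, and since $\lambda_{\Sigma}$ is an epimorphism of rings we conclude $\upsilon \circ \bar f = \alpha$. Uniqueness of $\upsilon$ is already contained in the universal property of $\lambda_{f_{*}(\Sigma)}$: any ring homomorphism with $\upsilon \circ \lambda_{f_{*}(\Sigma)} = \beta$ is unique, so no separate argument is needed.

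I do not expect a genuine obstacle here; the only point requiring care is the bookkeeping of base change for tensor products over non\nbd-commutative rings in the second paragraph — keeping track of which ring one tensors over and along which homomorphism the module structures are taken — but this reduces entirely to associativity of the tensor product together with the two universal properties and the epimorphism property of~$\lambda_{\Sigma}$.
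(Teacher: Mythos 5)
Your proof is correct and follows essentially the same route as the paper: you show $\beta$ is $f_{*}(\Sigma)$\nbd-inverting by transporting the inversion from $\lambda_{\Sigma}$ through the relation $\beta \circ f = \alpha \circ \lambda_{\Sigma}$, obtain $\upsilon$ from the universal property of $\lambda_{f_{*}(\Sigma)}$, and use the epimorphism property of $\lambda_{\Sigma}$ to force $\upsilon \circ \bar f = \alpha$. The only cosmetic difference is that you spell out the base-change associativity of tensor products explicitly where the paper compresses the same computation into the functor notation $h_{*} = {-}\tensor_{A} B$, writing $\beta_{*} f_{*}(\sigma) = \alpha_{*}(\lambda_{\Sigma})_{*}(\sigma)$.
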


\begin{figure}[ht]
  \centering
  \begin{diagram}[small]
    K & \rTo^{\lambda_{\Sigma}} & \Sigma\inv K && \\ %
    \dTo<{f} && \dTo<{\bar f} & \rdTo(4,4)>{\alpha} & \\ %
    S & \rTo[l>=4em]^{\lambda_{f_{*}(\Sigma)}} & f_{*}(\Sigma)\inv S
    && \\
    & \rdTo(6,2)<{\beta} && \rdDashto(4,2)>{\exists! \upsilon\qquad\quad} & \\
    &&&&&& T
  \end{diagram}
  \caption{Universal property of pushout square}
  \label{fig:po}
\end{figure}

\begin{proof}
  As for notation, given any ring homomorphism $h \colon A \rTo B$ we
  let $h_{*}$ stand for the functor $\,\hbox{-}\, \tensor_{A} B$. ---
  To prove the Proposition we verify that the square has the universal
  property of a pushout, see Fig.~\ref{fig:po}. Let
  $\alpha\colon \Sigma\inv K \rTo T$ and $\beta \colon S \rTo T$ be
  ring homomorphisms such that $\alpha \lambda_{\Sigma} = \beta
  f$. Given a map $\sigma \colon P \rTo Q$ in~$\Sigma$ we know that
  \begin{displaymath}
    \beta_{*} f_{*} (\sigma) = (\beta f)_{*} (\sigma) = (\alpha
    \lambda_{\Sigma})_{*} (\sigma) = \alpha_{*}
    (\lambda_{\Sigma}){}_{*} (\sigma) \ ;
  \end{displaymath}
  as $(\lambda_{\Sigma})_{*}(\sigma)$ is invertible so
  is~$\beta_{*} f_{*} (\sigma)$. Hence the map $\beta$ is
  $f_{*}(\Sigma)$\nbd-in\-verting, and consequently factorises
  uniquely as $\beta = \upsilon \lambda_{f_{*}(\Sigma)}$, for some ring
  homomorphism $\upsilon \colon f_{*}(\Sigma)\inv S \rTo T$. From the
  chain of equalities
  \begin{displaymath}
    \upsilon \bar f \lambda_{\Sigma} = \upsilon \lambda_{f_{*}(\Sigma)} f
    = \beta f = \alpha \lambda_{\Sigma}
  \end{displaymath}
  we conclude that $\alpha = \upsilon \bar f$ since $\lambda_{\Sigma}$
  is an epimorphism by
  Proposition~\ref{prop:on_localisation_1}~\ref{item:loc-is-epi}.
\end{proof}

The following purely category-theoretic lemma will be applied, in the
proof of Proposition~\ref{prop:hom-unit}, in the context of strongly
graded rings and non-commutative localisation.

\begin{lemma}
  \label{lem:cat-thy}
  Suppose that we are given a commutative pushout square
  \begin{diagram}[small]
    A & \rTo^{\alpha} & B \\
    \dTo<{\beta} & \po & \dTo<{\delta} \\
    C & \rTo^{\gamma} & D
  \end{diagram}
  (in any category) with $\beta$ an epimorphism. Suppose further that
  there exists $\iota \colon C \rTo B$ with
  $\iota\beta = \alpha$. Then $\delta\iota = \gamma$, and
  $\delta$ is an isomorphism.
\end{lemma}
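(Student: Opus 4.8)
The plan is to verify the claimed identities and then deduce that $\delta$ is an isomorphism directly from the universal property of the pushout, using the epimorphism hypothesis on $\beta$ twice --- once to pin down the mediating map and once to conclude it is a genuine inverse.

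First I would show $\delta\iota = \gamma$. We have $\delta\iota\beta = \delta\alpha = \gamma\beta$ by commutativity of the square together with $\iota\beta = \alpha$. Since $\beta$ is an epimorphism, cancelling it on the right gives $\delta\iota = \gamma$, as claimed.

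Next I would construct a candidate inverse for $\delta$ using the pushout property. The pair of maps $\mathrm{id}_B \colon B \rTo B$ and $\iota \colon C \rTo B$ satisfies the compatibility condition $\mathrm{id}_B \circ \alpha = \alpha = \iota \circ \beta$, so by the universal property of the pushout square there is a unique map $\epsilon \colon D \rTo B$ with $\epsilon\delta = \mathrm{id}_B$ and $\epsilon\gamma = \iota$. This already shows $\delta$ is a split monomorphism; it remains to see that $\epsilon$ is also a left inverse, i.e. $\delta\epsilon = \mathrm{id}_D$. For this I would again invoke uniqueness in the pushout: both $\delta\epsilon$ and $\mathrm{id}_D$ are maps $D \rTo D$, and to see they coincide it suffices (by the pushout property applied to the cocone $\delta \colon B \rTo D$, $\gamma \colon C \rTo D$) to check that they agree after precomposition with $\delta$ and with $\gamma$. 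On the $B$-side, $\delta\epsilon\delta = \delta \circ \mathrm{id}_B = \delta = \mathrm{id}_D \circ \delta$. On the $C$-side, $\delta\epsilon\gamma = \delta\iota = \gamma = \mathrm{id}_D \circ \gamma$, using the identity $\delta\iota = \gamma$ established in the previous step. Hence $\delta\epsilon = \mathrm{id}_D$, and $\delta$ is an isomorphism with inverse $\epsilon$.

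I do not anticipate a serious obstacle here: the argument is a standard diagram chase, and the only subtlety worth flagging is that the epimorphism hypothesis on $\beta$ is genuinely used (to get $\delta\iota = \gamma$ from $\delta\iota\beta = \gamma\beta$), while the two-sidedness of the inverse comes for free from the uniqueness clause of the pushout rather than needing a second cancellation. One could alternatively phrase the whole thing by noting that $\iota$ exhibits $B$ as a pushout of $\alpha$ along $\beta$ in a trivial way and then appealing to uniqueness of pushouts up to canonical isomorphism, but the explicit construction of $\epsilon$ above is cleaner to write down and self-contained.
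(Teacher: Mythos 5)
Your proof is correct and follows essentially the same route as the paper: you derive $\delta\iota = \gamma$ by right-cancelling the epimorphism $\beta$, then use the pushout's universal property to produce a one-sided inverse $\epsilon$ (the paper calls it $\phi$), and finally obtain $\delta\epsilon = \id_D$ by the uniqueness clause of that same universal property. The only cosmetic difference is that you spell out the two precompositions $\delta\epsilon\delta = \delta$ and $\delta\epsilon\gamma = \gamma$ explicitly, where the paper phrases the same check as the diagram in Fig.~\ref{fig:used} being completable by both $\id_D$ and $\delta\phi$.
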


\begin{proof}
  First, since $\delta \iota \beta = \delta\alpha = \gamma\beta$, and
  since $\beta$ is an epimorphism, we have $\delta\iota=\gamma$. Next,
  by the universal property of pushouts there exists a (uniquely
  determined) morphism $\phi \colon D \rTo B$ with
  $\phi\delta = \id_{B}$ and $\phi\gamma = \iota$. The commutative
  diagram of Fig.~\ref{fig:used} can be completed along the dotted
  arrow by both $\id_{D}$ and~$\delta\phi$; by uniqueness, this means
  $\delta\phi = \id_{D}$.
\end{proof}

\begin{figure}[ht]
    \centering
    \begin{diagram}[small,PS]
      A & \rTo^{\alpha} & B && \\
      \dTo<{\beta} & \po & \dTo<{\delta} & \rdTo(2,4)>{\delta} & \\
      C & \rTo^{\gamma} & D && \\
      & \rdTo(4,2)<{\delta\iota = \gamma} && \rdDashto & \\
      &&&& D
    \end{diagram}
    \caption{Pushout diagram used in proof of Lemma~\ref{lem:cat-thy}}
    \label{fig:used}
  \end{figure}

\part{$\bN$-graded rings and complexes contractible over~$\RR_{0}$}

For this part we assume that $\RR = \R$ is an arbitrary $\bZ$\nbd-graded
ring; in fact, we are only interested in the subring
$\Rpp = \bigoplus_{k=0}^{\infty} \RR_{k}$ which is, in effect, an
arbitrary $\bN$\nbd-graded ring.

\section{Complexes contractible over~$\RR_{0}$}

We characterise complexes~$C$ of $\Rpp$\nbd-modules such that
$C \tensor_{\Rpp} \RR_{0}$ is contractible, where the tensor product
is taken \textit{via} the ``constant coefficient'' ring homomorphism
$\tr^{0} \colon t \mapsto 0$ of~\eqref{eq:tr0}.

\subsection*{The map~$\zeta$}

Let $M$ be an $\Rpp$\nbd-module.  Using the notation
from~\eqref{eq:t_n-R}, we write $\zeta_{M} = \zeta$ for the obvious
map of $\Rpp$-modules
\begin{equation}
  \label{eq:zeta}
  \zeta_{M} = \zeta \colon M \tensor_{\Rpp} \Rp {1} \rTo M
  \tensor_{\Rpp} \Rp {0} = M \ , \quad m \tensor x \mapsto mx
\end{equation}
induced by the inclusion map $\Rp{1} \rTo \Rp{0}$. The map~$\zeta$ is
to be thought of as a substitute for the action of the
indeterminate~$t$. More precisely, if $\Rpp = K[t]$ is a polynomial
ring, then $\Rp{1} = t K[t]$ and the composition
\begin{displaymath}
  M = M \tensor_{K[t]} K[t] \rTo^{\iso}_{\tau} M \tensor_{K[t]} \big(t K[t]
  \big) \rTo^{\zeta} M \tensor_{K[t]} K[t] = M \ ,
\end{displaymath}
where $\tau(m \tensor r) = m \tensor tr$, is given by $m \mapsto mt$;
that is, up to the isomorphism~$\tau$ the map~$\zeta$ coincides with
the action of the indeterminate.

\subsection*{Invertible matrices over~$\Psp$}

We write an element $z \in \Psp$ as a formal power series:
$z = \sum_{p \geq 0} z_{p} t^{p}$. The usual proof shows that
\textit{$z$~is a unit in~$\Psp$ if and only if $z_{0} = \tr^{0}(z)$ is
  a unit in~$\RR_{0}$}, cf.~\eqref{eq:tr_one}.

A square matrix $M$ with entries in~$\Psp$ can be written as a formal
power series $M = \sum_{p \geq 0} M_{p} t^{p}$ with matrices $M_{p}$
having entries in~$\RR_{p}$; again, the usual proof shows that
\textit{the matrix~$M$ is invertible over~$\Psp$ if and only if
  $M_{0} = \tr^{0}(M)$ is invertible over~$\RR_{0}$}.

\begin{notation}
  \label{not:tilde_Omega_plus}
  We let $\tilde \Omega_{+}$ denote the set of all square matrices~$M$
  with entries in~$\Rpp$ such that $\tr^{0} (M)$ is an invertible
  matrix over~$\RR_{0}$, that is, such that $M$ is invertible
  over~$\Psp$.
\end{notation}

We apply Proposition~\ref{prop:on_localisation_1}~\ref{item:saturated}
to the $\Rpp$\nbd-ring $f \colon \Rpp \rTo^{\subset} \Psp$:

\begin{lemma}
  \label{lem:invertible_in_tilde_loc}
  A square matrix~$M$ with entries in the $\bN$\nbd-graded ring~$\Rpp$
  becomes invertible in $\tilde \Omega_{+}\inv \Rpp$ if and only if
  $\tr^{0}(M)$ is invertible over~$\RR_{0}$.\qed
\end{lemma}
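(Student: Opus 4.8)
The statement is designed to fall out of Proposition~\ref{prop:on_localisation_1}~\ref{item:saturated} once we feed it the right $\Rpp$\nbd-ring. The plan is therefore to take $K = \Rpp$, $S = \Psp$, and $f \colon \Rpp \rTo^{\subset} \Psp$ the inclusion (a unital ring homomorphism, since $\Rpp = \bigoplus_{k \geq 0} \RR_{k}$ sits inside $\Psp = \RRR\powers{t}$ as the subring of power series with only finitely many non-zero coefficients), and to identify $\tilde \Omega_{+}$ with the ``saturated'' set $\Sigma$ attached to this $f$.

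First I would recall, from the discussion immediately preceding Notation~\ref{not:tilde_Omega_plus}, that a square matrix $M = \sum_{p \geq 0} M_{p} t^{p}$ with entries in~$\Psp$ is invertible over~$\Psp$ if and only if its constant term $M_{0} = \tr^{0}(M)$ is invertible over~$\RR_{0}$. Restricting attention to matrices with entries in~$\Rpp$, this says precisely that the set $\Sigma$ of Proposition~\ref{prop:on_localisation_1}~\ref{item:saturated} for our chosen $f$ --- namely the set of square matrices~$A$ over~$\Rpp$ such that $f(A)$ is invertible over~$\Psp$ --- is exactly the set of square matrices~$A$ over~$\Rpp$ with $\tr^{0}(A)$ invertible over~$\RR_{0}$. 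By Notation~\ref{not:tilde_Omega_plus} this is the definition of $\tilde \Omega_{+}$, so $\Sigma = \tilde \Omega_{+}$ and hence $\Sigma\inv K = \tilde \Omega_{+}\inv \Rpp$ with $\lambda_{\Sigma} = \lambda_{\tilde \Omega_{+}}$.

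Then I would simply quote Proposition~\ref{prop:on_localisation_1}~\ref{item:saturated}: for a square matrix $A$ with entries in $\Rpp$ one has $A \in \tilde \Omega_{+}$ if and only if $\lambda_{\tilde \Omega_{+}}(A)$ is invertible over $\tilde \Omega_{+}\inv \Rpp$; combining this with the identification of $\tilde \Omega_{+}$ in the previous step yields that $A$ becomes invertible in $\tilde \Omega_{+}\inv \Rpp$ exactly when $\tr^{0}(A)$ is invertible over~$\RR_{0}$.

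There is no genuine obstacle here: all the real content is already packaged in Proposition~\ref{prop:on_localisation_1}~\ref{item:saturated} and in the elementary fact that invertibility of a power-series matrix over~$\Psp$ is detected by its constant coefficient. The only point worth flagging is that, unlike part~\ref{item:loc-injective} of that Proposition, this argument does not require $f$ to be injective --- so no hypothesis on the $\bZ$\nbd-graded ring~$\RR$ (in particular, no strong-grading assumption) is needed; $\Rpp$ is here an arbitrary $\bN$\nbd-graded ring.
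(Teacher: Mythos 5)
Your proposal is exactly the paper's argument: the paper presents the lemma with a bare \qed precisely because it is the specialisation of Proposition~\ref{prop:on_localisation_1}~\ref{item:saturated} to the inclusion $\Rpp \hookrightarrow \Psp$, together with the observation preceding Notation~\ref{not:tilde_Omega_plus} that invertibility over~$\Psp$ is detected by the constant coefficient. Nothing to add; your remark that no strong-grading hypothesis is needed is correct and consistent with the paper's setting.
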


\subsection*{The localisation $\tilde\Omega_{+}\inv \Rpp$}
We consider an element $A^{+} \in \tilde\Omega_{+}$ of size~$k$ as an
endomorphism $A^{+} \colon \Rpp^{k} \rTo \Rpp^{k}$ of the finitely
generated free $\Rpp$\nbd-module $\Rpp^{k}$. The non-commutative
localisation
\begin{displaymath}
  \lambda = \lambda_{\tilde\Omega_{+}} \colon \Rpp \rTo
  \tilde\Omega_{+}\inv \Rpp
\end{displaymath}
can be used to characterise the $\RR_{0}$\nbd-contractible
complexes~$C^{+}$ as follows, generalising known results for
polynomial rings (\textsc{Ranicki}
\cite[Proposition~10.13]{Ranicki-knot}):

\begin{theorem}
  \label{thm:R0-contractibility}
  Let $\Rpp = \bigoplus_{k=0}^{\infty} \RR_{k}$ be an arbitrary
  $\bN$\nbd-graded ring, and let $C^{+}$ be a bounded complex of
  finitely generated free $\Rpp$\nbd-modules. The following statements
  are equivalent:
  \begin{enumerate}
  \item The complex $C^{+} \tensor_{\Rpp} \RR_{0}$ is
    contractible, the tensor product being taken with respect to the
    ring map $\tr^{0} \colon \Rpp \rTo \RR_{0}$, $t \mapsto 0$.
  \item The induced complex $C^{+} \tensor_{\Rpp} \Psp$ is
    contractible.
  \item The induced complex
    $C^{+} \tensor_{\Rpp} \tilde \Omega_{+}\inv \Rpp$ is contractible.
  \item The map
    $\zeta \colon C \tensor_{\Rpp} t^{1} \Rpp \rTo C \tensor_{\Rpp}
    t^{0} \Rpp$
    from~\eqref{eq:zeta} is a quasi-isomorphism.
  \end{enumerate}
\end{theorem}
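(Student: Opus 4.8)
The plan is to prove the cycle of implications $(1)\Rightarrow(2)$, $(1)\Rightarrow(3)$, $(2)\Rightarrow(1)$, $(3)\Rightarrow(1)$, together with the separate equivalence $(1)\Leftrightarrow(4)$; this renders all four statements equivalent. Two facts carry the argument: contractibility is detected by proto-contractions (Lemma~\ref{lem:proto}), and a square matrix~$M$ over~$\Rpp$ with $\tr^{0}(M)$ invertible over~$\RR_{0}$ lies in~$\tilde\Omega_{+}$, hence becomes invertible over~$\Psp$ as well as over~$\tilde\Omega_{+}\inv\Rpp$ (Notation~\ref{not:tilde_Omega_plus} and Lemma~\ref{lem:invertible_in_tilde_loc}).

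For $(1)\Rightarrow(2)$ and $(1)\Rightarrow(3)$ I argue simultaneously. A contraction of $C^{+}\tensor_{\Rpp}\RR_{0}$ is a family of $\RR_{0}$\nbd-linear maps $\bar s_{k}$ with $d\bar s+\bar s d=\id$. Since each $C^{+}_{k}$ is a finitely generated free $\Rpp$\nbd-module and $\tr^{0}$ is surjective, I can lift each $\bar s_{k}$ to an $\Rpp$\nbd-linear map $s_{k}\colon C^{+}_{k}\rTo C^{+}_{k+1}$. The resulting ``defect'' $\rho_{k}:=d s_{k}+s_{k-1}d\colon C^{+}_{k}\rTo C^{+}_{k}$ is a square matrix over~$\Rpp$ with $\tr^{0}(\rho_{k})=\id$, so $\rho_{k}\in\tilde\Omega_{+}$ and therefore $\rho_{k}$ becomes invertible over~$\Psp$ and over~$\tilde\Omega_{+}\inv\Rpp$. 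Thus the maps $s_{k}\tensor\id$ form a proto-contraction of $C^{+}\tensor_{\Rpp}\Psp$ (respectively of $C^{+}\tensor_{\Rpp}\tilde\Omega_{+}\inv\Rpp$), and Lemma~\ref{lem:proto} gives the asserted contractibility. The reverse implications $(2)\Rightarrow(1)$ and $(3)\Rightarrow(1)$ are formal: tensor products preserve chain homotopies, and $\RR_{0}$ receives ring homomorphisms from $\Psp$ (namely $\tr^{0}$) and from $\tilde\Omega_{+}\inv\Rpp$ (since $\tr^{0}\colon\Rpp\rTo\RR_{0}$ is $\tilde\Omega_{+}$\nbd-inverting by the definition of~$\tilde\Omega_{+}$, hence factors through $\lambda_{\tilde\Omega_{+}}$), so in both cases $C^{+}\tensor_{\Rpp}\RR_{0}$ is obtained by base change from the complex in~$(2)$ or~$(3)$.

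For $(1)\Leftrightarrow(4)$ I use the short exact sequence of left $\Rpp$\nbd-modules $0\rTo t^{1}\Rpp\rTo t^{0}\Rpp=\Rpp\rTo \RR_{0}\rTo 0$. Applying $C^{+}\tensor_{\Rpp}-$ keeps it exact, since the modules of~$C^{+}$ are free and hence flat; the result is a short exact sequence of chain complexes $0\rTo C^{+}\tensor_{\Rpp}t^{1}\Rpp\rTo^{\zeta}C^{+}\rTo C^{+}\tensor_{\Rpp}\RR_{0}\rTo 0$, whose first map is precisely the map~$\zeta$ of~\eqref{eq:zeta} induced by the inclusion $t^{1}\Rpp\rTo^{\subset}t^{0}\Rpp$. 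The associated long exact homology sequence shows that $\zeta$ is a quasi-isomorphism if and only if $C^{+}\tensor_{\Rpp}\RR_{0}$ is acyclic; and a bounded complex of finitely generated projective (here: free) $\RR_{0}$\nbd-modules is acyclic exactly when it is contractible. Hence $(4)\Leftrightarrow(1)$.

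The implications $(1)\Rightarrow(2)$ and $(1)\Rightarrow(3)$ are where the content lies, and the expected obstacle --- that contractibility over the ``large'' rings~$\Psp$, $\tilde\Omega_{+}\inv\Rpp$ cannot obviously be pushed back down --- is circumvented rather than confronted: one never lifts a contraction \emph{from} the large rings, but instead builds a proto-contraction over them out of a contraction over the \emph{small} ring~$\RR_{0}$, using that $\tilde\Omega_{+}$ is calibrated so that the defect $ds+sd$ of an arbitrary $\Rpp$\nbd-lift is automatically invertible in the relevant localisations, with no convergence or completeness input. The remaining points to check are routine: that $C^{+}\tensor_{\Rpp}\Psp$ and $C^{+}\tensor_{\Rpp}\tilde\Omega_{+}\inv\Rpp$ are again bounded complexes of finitely generated free modules (so that Lemma~\ref{lem:proto} is applicable), and the identification of the inclusion $C^{+}\tensor_{\Rpp}t^{1}\Rpp\rTo^{\subset}C^{+}$ with the map~$\zeta$.
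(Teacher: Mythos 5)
Your proposal is correct and follows essentially the same route as the paper: you prove the easy implications via the factorisation of the ring map $\tr^{0}$ (the paper chains $(3)\Rightarrow(2)\Rightarrow(1)$, you go to $(1)$ separately from both, which is an inessential reorganisation), you prove $(1)\Rightarrow(3)$ (and simultaneously $(1)\Rightarrow(2)$) by the very same proto-contraction argument — note that your ``lift'' of $\bar s_{k}$ can simply be the matrix $S^{+}_{k}$ with entries in $\RR_{0}\subset\Rpp$, which is the paper's canonical choice — and $(1)\Leftrightarrow(4)$ by the identical short exact sequence. One minor remark: Lemma~\ref{lem:proto} imposes no boundedness or finite-generation hypotheses, so the closing worry in your last paragraph is unnecessary.
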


\begin{proof}
  (3) \impl~(2) \impl~(1): This follows from the factorisation
  \begin{displaymath}
    \Rpp \rTo^{\subseteq} \tilde \Omega_{+}\inv \Rpp \rTo \Psp
    \rTo[l>=5em]^{\tr^{0} \colon t \mapsto 0} \RR_{0}
  \end{displaymath}
  of the ring homomorphism $\tr^{0} \colon \Rpp \rTo \RR_{0}$.

  (1) \impl~(3): We equip the finitely generated free
  modules~$C^{+}_{n}$ with arbitrary finite bases; denote the number
  of elements of the basis for~$C^{+}_{n}$ by~$r_{n}$ so that
  $C^{+}_{n}$ is identified with $\Rpp^{r_{n}}$. The differentials
  $d^{+}_{n} \colon C^{+}_{n} \rTo C^{+}_{n-1}$ are thus represented
  by matrices~$D^{+}_{n}$ of size $r_{n-1} \times r_{n}$ with entries
  in~$\Rpp$. The differentials $\tr^{0}(d^{+}_{n})$ in the induced
  complex $C^{+} \tensor_{\Rpp} \RR_{0}$ are then represented by the
  matrices~$\tr^{0}(D^{+}_{n})$, identifying
  $C^{+}_{n} \tensor_{\Rpp} \RR_{0}$ with $\RR_{0}^{r_{n}}$. By
  hypothesis there exists a contracting homotopy consisting of a
  family of $\RR_{0}$-linear maps
  \begin{displaymath} %!
    \sigma^{+}_{n} \colon C^{+}_{n} \tensor_{\Rpp} \RR_{0} \rTo
    C^{+}_{n+1} \tensor_{\Rpp} \RR_{0}
  \end{displaymath}
  such that
  % $\sigma^{+}_{n} \colon C^{+}_{n} \tensor_{\Rpp} \RR_{0} \rTo
  % C^{+}_{n+1} \tensor_{\Rpp} \RR_{0}$ such that
  \begin{displaymath}
     \tr^{0}(d^{+}_{n+1}) \circ \sigma^{+}_{n} + \sigma^{+}_{n-1}
     \circ \tr^{0}(d^{+}_{n}) = \id \ .
  \end{displaymath}
  % The bases yield preferred bases for the free $\RR_{0}$\nbd-modules
  % $C^{+}_{n} \tensor_{\Rpp} \RR_{0}$ so that 
  The map $\sigma^{+}_{n}$ is represented by a matrix~$S^{+}_{n}$ of
  size $r_{n+1} \times r_{n}$ with entries in~$\RR_{0}$. The matrices
  satisfy the relation
  \begin{displaymath}
    \tr^{0} \Big( D^{+}_{n+1} \circ S^{+}_{n} + S^{+}_{n-1} \circ
    D^{+}_{n} \Big) = \tr^{0} \big( D^{+}_{n+1}  \big) \circ S^{+}_{n}
    + S^{+}_{n-1} \circ \tr^{0} \big( D^{+}_{n} \big) = I_{r_{n}} \ ,
  \end{displaymath}
  a unit matrix of size~$r_{n}$. This implies, by
  Lemma~\ref{lem:invertible_in_tilde_loc}, that the matrix
  \begin{displaymath}
    D^{+}_{n+1} \circ S^{+}_{n} + S^{+}_{n-1} \circ D^{+}_{n}
  \end{displaymath}
  becomes invertible over $\tilde \Omega_{+}\inv \Rpp$. Thus
  the~$S_{n}^{+}$ define a
  $\lambda_{\tilde\Omega_{+}}$-proto-con\-trac\-tion of~$C^{+}$. With
  Lemma~\ref{lem:proto} we conclude that
  $C^{+} \tensor_{\Rpp} \tilde \Omega_{+}\inv \Rpp$ is contractible as
  advertised.

  (1) \eqv~(4): From the short exact sequence
  \begin{multline*}
    0 \rTo C^{+} \tensor_{\Rpp} \Rp {1} \rTo^{\zeta} C^{+}
    \tensor_{\Rpp} \Rp {0} \\ \rTo C^{+} \tensor_{\Rpp} \RR_{0} \rTo 0
  \end{multline*}
  we infer that the canonical map
  $\cone(\zeta) \rTo C^{+} \tensor_{\Rpp} \RR_{0}$ is a
  quasi-iso\-mor\-phism. Thus $\zeta$ is a quasi-isomorphism if and
  only if $\cone(\zeta)$ is acyclic if and only if
  $C^{+} \tensor_{\Rpp} \RR_{0}$ is acyclic; as the latter complex
  consists of projective $\RR_{0}$\nbd-modules, this is equivalent
  with $C^{+} \tensor_{\Rpp} \RR_{0}$ being contractible.
\end{proof}

\begin{theorem}[Universal property of~$\tilde \Omega_{+}\inv \Rpp$]
  \label{thm:universal-tilde}
  Let $\Rpp$ be an arbitrary $\bN$\nbd-graded ring. The localisation
  $\lambda \colon \Rpp \rTo \tilde \Omega_{+}\inv \Rpp$ is the
  universal $\Rpp$\nbd-ring making $\RR_{0}$\nbd-contractible chain
  complexes contractible. That is, suppose that $f \colon \Rpp \rTo S$
  is an $\Rpp$\nbd-ring such that for every bounded complex of
  finitely generated free $\Rpp$\nbd-modules~$C^{+}$, contractibility
  of $C^{+} \tensor_{\Rpp} \RR_{0}$ implies contractibility of
  $C^{+} \tensor_{\Rpp} S$. Then there is a factorisation
  \begin{displaymath}
    \Rpp \rTo^{\lambda} \tilde \Omega_{+}\inv \Rpp \rTo^{\eta} S
  \end{displaymath}
  of~$f$, with a uniquely determined ring homomorphism~$\eta$.
\end{theorem}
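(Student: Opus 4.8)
The plan is to derive the factorisation from the universal property of the non-commutative localisation $\lambda_{\tilde\Omega_{+}}$, for which it suffices to show that the given $\Rpp$-ring $f \colon \Rpp \rTo S$ is $\tilde\Omega_{+}$-inverting; the uniqueness of $\eta$ is then automatic from the universal property. So the whole matter reduces to the following claim: if $M \in \tilde\Omega_{+}$, that is, $\tr^{0}(M)$ is invertible over $\RR_{0}$, then $f(M)$ is invertible over $S$.

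To prove this claim I would exhibit, for each $M \in \tilde\Omega_{+}$ of size $k$, a specific bounded complex $C^{+}$ of finitely generated free $\Rpp$-modules whose contractibility over $S$ forces $f(M)$ to be invertible, and whose reduction modulo $t$ is contractible over $\RR_{0}$ by construction. The natural candidate is the two-term complex $C^{+} = \big( \Rpp^{k} \rTo^{M} \Rpp^{k} \big)$ concentrated in degrees $1$ and $0$. Then $C^{+} \tensor_{\Rpp} \RR_{0}$ is the complex $\RR_{0}^{k} \rTo^{\tr^{0}(M)} \RR_{0}^{k}$, which is contractible precisely because $\tr^{0}(M)$ is invertible (the inverse matrix gives an explicit contracting homotopy). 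By the hypothesis on $f$, the complex $C^{+} \tensor_{\Rpp} S = \big( S^{k} \rTo^{f(M)} S^{k} \big)$ is therefore contractible; but a two-term complex $S^{k} \rTo^{N} S^{k}$ is contractible if and only if $N$ is invertible over $S$ (a contracting homotopy in degree $0$ is a left inverse, in degree $1$ a right inverse, and one checks these agree). Hence $f(M)$ is invertible over $S$, as required.

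With the claim in hand, $f$ is $\tilde\Omega_{+}$-inverting, so by the defining universal property of $\lambda = \lambda_{\tilde\Omega_{+}} \colon \Rpp \rTo \tilde\Omega_{+}\inv\Rpp$ there is a unique ring homomorphism $\eta \colon \tilde\Omega_{+}\inv\Rpp \rTo S$ with $\eta \circ \lambda = f$. This is exactly the asserted factorisation, and uniqueness of $\eta$ is part of the universal property (alternatively, it follows from Proposition~\ref{prop:on_localisation_1}~\ref{item:loc-is-epi} since $\lambda$ is an epimorphism).

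I expect the only genuine point requiring care is the elementary lemma that a two-term complex of free modules $S^{k} \rTo^{N} S^{k}$ is contractible if and only if $N$ is invertible — in particular checking that the two one-sided inverses coming from the contracting homotopy coincide, and keeping track of left-versus-right module conventions and the order of composition when passing between maps and matrices. Everything else is a direct appeal to already-established machinery: the characterisation of $\tilde\Omega_{+}$ via $\tr^{0}$ (Notation~\ref{not:tilde_Omega_plus}), the identification of contractibility over $\RR_{0}$ of the model complex, and the universal property of non-commutative localisation.
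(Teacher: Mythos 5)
Your proposal is correct and follows essentially the same route as the paper: reduce to showing $f$ is $\tilde\Omega_{+}$-inverting, then for each $M\in\tilde\Omega_{+}$ test $f$ against the two-term complex $\Rpp^{k}\rTo^{M}\Rpp^{k}$, whose contraction over $\RR_{0}$ comes from the invertibility of $\tr^{0}(M)$ and whose contraction over~$S$ forces $f(M)$ to be invertible. The only cosmetic difference is that the paper routes the $\RR_{0}$-contractibility of that complex through Theorem~\ref{thm:R0-contractibility}, whereas you observe it directly from the definition of $\tilde\Omega_{+}$; both are fine.
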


\begin{proof}
  It was shown in Theorem~\ref{thm:R0-contractibility} that the
  $\Rpp$\nbd-ring $\tilde \Omega_{+}\inv \Rpp$ makes
  $\RR_{0}$\nbd-contractible chain complexes contractible. Thus it
  is enough to verify that $f$ is $\tilde \Omega_{+}$\nbd-inverting;
  the universal property of non-commutative localisation then yields
  the desired factorisation and its uniqueness. Consider the element
  $A^{+} \in \tilde \Omega_{+}$ as a chain complex
  \begin{displaymath}
    C^{+} = \big( \Rpp^{k} \rTo^{A^{+}} \Rpp^{k} \big) \ .
  \end{displaymath}
  As $A^{+}$ becomes invertible over~$\tilde \Omega_{+}\inv \Rpp$, the
  complex $C^{+} \tensor_{\Rpp} \tilde \Omega_{+}\inv \Rpp$ is
  contractible, hence so is $C^{+} \tensor_{\Rpp} \RR_{0}$ by
  Theorem~\ref{thm:R0-contractibility}. This makes
  $C^{+} \tensor_{\Rpp} S$ contractible, by hypothesis on~$f$, whence
  $A^{+}$ becomes invertible in~$S$ as required.
\end{proof}

\part{Strongly graded rings and finite domination}

We now turn to the theory of $\RR_{0}$\nbd-finite domination of
$\Rpp$\nbd-module complexes. We characterise finite domination
\textit{via} \textsc{Novikov} homology (Theorem~\ref{thm:R0-fd-R+})
and \textit{via} a non-commutative localisation of~$\Rpp$
(Theorem~\ref{thm:non-com-fd+}). \textit{We assume throughout that
  $\RR = \R$ is a strongly $\bZ$\nbd-graded ring.}

\section{Algebraic half-tori and the \textsc{Mather} trick}

\subsection*{Algebraic half-tori and the \textsc{Mather} trick}

Let $1 = \sum_{j} \alpha^{(1)}_{j} \beta^{(-1)}_{j}$ be a partition of
unity of type $(1,-1)$ in $\RR = \R$. Given an arbitrary
$\Rpp$-module~$M$, let $\mu = \mu_{M}$ denote the map
\begin{equation}
  \label{eq:mu}
  \mu \colon M \tensor_{\RR_{0}} \Rp 1 \rTo M \tensor_{\RR_{0}} \Rp 0
  \ , \quad m \tensor x \mapsto \sum_{j} m \alpha^{(1)}_{j} \tensor \beta^{(-1)}_{j}
  x \ .
\end{equation}
The map $\mu$ is $\RR_{0}$\nbd-balanced (hence well-defined) and
independent of the choice of partition of unity since it can be
written as the composition
\begin{multline*} %!
  M \tensor_{\RR_{0}} \Rp 1 \iso M \tensor_{\RR_{0}} \RR_{0}
  \tensor_{\RR_{0}} \Rp 1 \\ \iso M \tensor_{\RR_{0}} R_{1}
  \tensor_{\RR_{0}} R_{-1} \tensor_{\RR_{0}} \Rp 1
  \rTo M \tensor_{\RR_{0}} \Rp 0
\end{multline*}
where the second isomorphism is induced by
$\pi_{1}\inv \colon \RR_{0} \rTo^{\iso} \RR_{1} \tensor_{\RR_{0}}
\RR_{-1}$,
cf.~Lemma~\ref{lem:pi_n-is-iso}, and the last arrow is induced by
the multiplication maps $M \tensor_{\RR_{0}} \RR_{1} \rTo M$ and
$\RR_{-1} \tensor_{\RR_{0}} \Rp 1 \rTo \Rp 0$.

As a matter of notation, we also introduce the inclusion map
\begin{displaymath}
  \iota \colon M \tensor_{\RR_{0}} \Rp 1 \rTo M \tensor_{\RR_{0}} \Rp 0
  \ , \quad m \tensor x \mapsto m \tensor x \ .
\end{displaymath}
Moreover, it is convenient at this point to choose, once and for all,
additional partitions of unity
\begin{displaymath}
  1 = \sum_{j_{n}} \alpha^{(n)}_{j_{n}} \beta^{(-n)}_{j_{n}}
\end{displaymath}
of type~$(n,-n)$, for all $n \geq 0$ ($n \neq 1$). These exist in view
of our standing assumption for this part, that the ring $\RR = \R$ is
strongly graded.

\begin{lemma}[Canonical resolution]
  \label{lem:hal-torus-sequence}
  Suppose that $\RR = \R$ is a strongly $\bZ$\nbd-graded ring. Let $M$
  be a right $\Rpp$\nbd-module. There is a short exact sequence of
  $\Rpp$\nbd-modules
  \begin{equation}
    \label{eq:ses}
    0 \rTo M \tensor_{\RR_{0}} \Rp 1 \rTo[l>=3em]^{\iota - \mu} M
    \tensor_{\RR_{0}} \Rp 0 \rTo^{\pi} M \rTo 0 \ ,
  \end{equation}
  where $\mu$ is as in~\eqref{eq:mu},
  $\iota (m \tensor x) = m \tensor x$ and $\pi(m \tensor x) = mx$.
\end{lemma}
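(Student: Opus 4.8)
The plan is to establish exactness of \eqref{eq:ses} at each of its three non-trivial spots, treating $M \tensor_{\RR_0} \Rp 1$, $M \tensor_{\RR_0} \Rp 0$, and $M$ in turn. Exactness at $M$ is immediate: the multiplication map $\pi \colon M \tensor_{\RR_0} \Rpp \rTo M$ is surjective because $m \mapsto m \tensor 1 \mapsto m$, and $\pi \circ (\iota - \mu)$ lands in the image of $\pi$ regardless, so there is nothing to check beyond surjectivity there. The content of the lemma is therefore (i) that $\iota - \mu$ is injective, and (ii) that $\ker \pi = \mathrm{im}(\iota - \mu)$.

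For the inclusion $\mathrm{im}(\iota - \mu) \subseteq \ker \pi$, I would compute $\pi(\iota - \mu)(m \tensor x) = mx - \sum_j (m\alpha_j^{(1)})(\beta_j^{(-1)} x) = mx - m\big(\sum_j \alpha_j^{(1)} \beta_j^{(-1)}\big) x = mx - mx = 0$, using that $\sum_j \alpha_j^{(1)}\beta_j^{(-1)} = 1$ and that the $\Rpp$-module action on $M$ factors the grouping (here $\beta_j^{(-1)} x \in \Rpp$ since $x \in \Rp 1$ has degrees $\geq 1$, so $\beta_j^{(-1)} x$ has degrees $\geq 0$). The reverse inclusion $\ker \pi \subseteq \mathrm{im}(\iota - \mu)$ together with injectivity of $\iota - \mu$ is the heart of the matter. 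The natural strategy is to filter $M \tensor_{\RR_0} \Rp 0 = \bigoplus_{k \geq 0} M \tensor_{\RR_0} \RR_k$ by degree in the $\Rpp$-variable and argue by a telescoping / triangular argument: the operator $\mu$ raises the $t$-degree by one (it maps $M \tensor_{\RR_0} \RR_k$ into $M \tensor_{\RR_0} \RR_{k+1}$ after the shift, suitably interpreted), so $\iota - \mu$ is ``upper triangular with identity on the diagonal'' with respect to this filtration. Concretely, given $\xi = \sum_{k \geq 1} \xi_k$ in $M \tensor_{\RR_0} \Rp 1$ with $\xi_k \in M \tensor_{\RR_0} \RR_k$ and $(\iota - \mu)(\xi) = 0$, comparing lowest-degree terms forces $\xi_1 = 0$, then inductively $\xi_k = 0$ for all $k$, giving injectivity; and given $\omega \in \ker\pi$ one solves $(\iota - \mu)(\xi) = \omega$ degree by degree, the triangular structure guaranteeing a (necessarily finite, since $\omega$ has bounded degree and $\ker\pi \cap (M \tensor_{\RR_0}\RR_0) = 0$) solution. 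Alternatively — and this is probably cleaner to write — one recognises $\iota - \mu$ as $\mathrm{id} - N$ for a locally nilpotent operator $N$ on the relevant finitely-supported pieces, so that $\sum_{i \geq 0} N^i$ furnishes a two-sided inverse onto its image, immediately yielding both injectivity and the description of the image.

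I would also make explicit the identification being used implicitly throughout: by Corollary~\ref{cor:strong-means-fgp}(3) we have $M \tensor_{\RR_0} \Rp{-m} \iso$ the degree-$\geq -m$ part, and more relevantly the maps $\iota$ and $\mu$ of \eqref{eq:mu} are exactly the ``shift'' and ``multiplication-by-$t$'' maps in disguise, so that the sequence \eqref{eq:ses} is the module-theoretic incarnation of the standard resolution $0 \to M[t] \xrightarrow{1 - t} M[t] \to M \to 0$ when $\Rpp = \RR_0[t]$; the strongly-graded hypothesis, via the partition of unity of type $(1,-1)$ and Lemma~\ref{lem:pi_n-is-iso}, is precisely what lets one define $\mu$ and run the nilpotence argument in the general graded setting.

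The main obstacle is bookkeeping with the grading: one must be careful that $\iota$, $\mu$, and $\pi$ are genuinely $\Rpp$-linear (not merely $\RR_0$-linear) — $\Rpp$-linearity of $\mu$ is what the paragraph after \eqref{eq:mu} already addresses by writing $\mu$ as a composite of canonical maps, so I would simply invoke that — and that the filtration/nilpotence argument respects the right $\Rpp$-action. I expect no genuine difficulty beyond this, since the structure is entirely parallel to the classical polynomial case; the work is in phrasing the triangularity cleanly enough that injectivity and exactness at the middle term fall out of a single computation rather than two separate inductions.
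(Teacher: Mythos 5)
Your approach --- reduce to an $\RR_{0}$-linear splitting and exploit the grading to set up a triangular/Neumann-series argument --- is essentially the paper's: the map $\rho$ the paper constructs is precisely the finite Neumann series you allude to, written in closed form, and the paper then verifies $\rho\circ(\iota-\mu)=\id$ and $\sigma\circ\pi+(\iota-\mu)\circ\rho=\id$ directly.

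There is, however, a direction error in your sketch. The map $\mu$ \emph{lowers} the grading degree by one: for $x\in\RR_{k}$ with $k\geq 1$, the second tensor factor of $\sum_{j} m\alpha_{j}^{(1)}\tensor\beta_{j}^{(-1)}x$ lies in $\RR_{k-1}$. Consequently the injectivity induction has to run from the \emph{top} degree downward. If $\xi=\sum_{k=1}^{N}\xi_{k}$ satisfies $(\iota-\mu)(\xi)=0$, the degree-$N$ component of the output is $\iota(\xi_{N})=\xi_{N}$, which forces $\xi_{N}=0$, then $\xi_{N-1}=\mu(\xi_{N})=0$, and so on. Your claim that comparing lowest-degree terms forces $\xi_{1}=0$ is not correct: the degree-$0$ component of $(\iota-\mu)(\xi)$ is $-\mu(\xi_{1})$, and $\mu$ restricted to $M\tensor_{\RR_{0}}\RR_{1}$ is, under $M\tensor_{\RR_{0}}\RR_{0}\iso M$, just the action map $m\tensor x\mapsto mx$, which need not be injective --- it vanishes identically when $M=\RR_{0}$ is made an $\Rpp$-module via $\tr^{0}$.

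Relatedly, exactness at the middle term is not automatic from the triangular structure. Solving $(\iota-\mu)(\xi)=\omega$ from the top down produces $\xi_{N},\dots,\xi_{1}$ freely, but the remaining degree-$0$ equation $-\mu(\xi_{1})=\omega_{0}$ is then a genuine constraint. Verifying it requires computing that $\mu$ iterated $i$ times on $m\tensor x$ with $x\in\RR_{i}$ returns $mx\tensor 1$ --- this is where Lemma~\ref{lem:more-pous} and the independence of the maps from the chosen partitions of unity enter, just as in the paper --- and then invoking $\pi(\omega)=0$. In other words, the hypothesis $\omega\in\ker\pi$ is used precisely to close off the degree-$0$ end of the recursion. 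The paper's identity $\sigma\pi+(\iota-\mu)\rho=\id$ packages exactly this verification, so you would need to reproduce an equivalent calculation rather than appeal to triangularity alone.
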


\begin{proof}
  This is similar to the proof of Proposition~3.2 in~\cite{MR3704930}.
  Since $\sum_{j} \alpha^{(1)}_{j} \beta^{(-1)}_{j} = 1$ we have
  $\pi \iota = \pi \mu$ and hence $\pi (\iota - \mu) = 0$. It is thus
  enough to show that the sequence is split exact when considered as a
  sequence of $\RR_{0}$\nbd-modules.

  To begin with, the map $\sigma(m) = m \tensor 1$ is certainly an
  $\RR_{0}$-linear section of~$\pi$. Next, we define the
  $\RR_{0}$\nbd-linear map
  \begin{displaymath}
    \rho \colon  M \tensor_{\RR_{0}} \Rp 0 \rTo M \tensor_{\RR_{0}}
    \Rp 1
  \end{displaymath}
  on elements of the form $m \tensor x_{d}$, with $x_{d} \in \RR_{d}$,
  by the formula
  \begin{displaymath}
    \rho (m \tensor x_{d}) = %
    \sum_{k=0}^{d-1} \sum_{j_{k}} m \alpha_{j_{k}}^{(k)} \tensor
    \beta_{j_{k}}^{(-k)} x_{d} \ .
  \end{displaymath}
  We note the particular cases
  \begin{align*}
    \rho (m \tensor x_{0}) & = 0 \ , \\
    \rho (m \tensor x_{1}) & = m \tensor x_{1} \ , \\
    \rho (m \tensor x_{2}) & = m \tensor x_{2} + \sum_{j} m \alpha^{(1)}_{j}
                             \tensor \beta^{(-1)}_{j} x_{2} \ .
  \end{align*}
  \textit{The summands
    $s_{k} = \sum_{j_{k}} m \alpha_{j_{k}}^{(k)} \tensor
    \beta_{j_{k}}^{(-k)} x_{d}$,
    and hence the map~$\rho$, do not depend on the particular choice
    of partition of unity.} This is because $s_{k}$ is the image of $m
  \tensor x_d$ under the the composition
  \begin{multline*}
    M \tensor_{\RR_{0}} \RR_{d} \iso M \tensor_{\RR_{0}} \RR_{0}
    \tensor_{\RR_{0}} \RR_{d} \\
    \rTo^{\pi_{k}\inv}_{\iso} M \tensor_{\RR_{0}} \RR_{k}
    \tensor_{\RR_{0}} \RR_{-k} \tensor_{\RR_{0}} \RR_{d} \rTo^{\sigma}
    M \tensor_{\RR_{0}} \RR_{-k+d} \ ,
  \end{multline*}
  where $\sigma(m \tensor a \tensor b \tensor x) = ma \tensor bx$, and
  $\pi_{k}\inv$ does not depend on choices by
  Lemma~\ref{lem:pi_n-is-iso}.

  We have $\rho \circ (\iota - \mu) = \id$ since, for an element
  $x_{d} \in \RR_{d}$, we calculate
  \begin{multline*}
    \rho \circ (\iota - \mu) (m \tensor x_{d}) = %
    \rho (m \tensor x_{d}) - \sum_{j} \rho (m \alpha^{(1)}_{j} \tensor
    \beta^{(-1)}_{j} x_{d}) \\ %
    = \sum_{k=0}^{d-1} \sum_{j_{k}} m \alpha_{j_{k}}^{(k)} \tensor
    \beta_{j_{k}}^{(-k)} x_{d} %
    - \sum_{k=0}^{d-2} \sum_{j_{k}} \sum_{j} m \alpha^{(1)}_{j}
    \alpha_{j_{k}}^{(k)} \tensor \beta_{j_{k}}^{(-k)} \beta^{(-1)}_{j} x_{d}
    \\ %
    \underset{(\diamond)}{=} \sum_{k=0}^{d-1} \sum_{j_{k}} m
    \alpha_{j_{k}}^{(k)} \tensor \beta_{j_{k}}^{(-k)} x_{d} %
    - \sum_{k=0}^{d-2} \sum_{j_{k+1}} m \alpha_{j_{k+1}}^{(k+1)} \tensor
    \beta_{j_{k+1}}^{(-k-1)} x_{d} \\ %
    = \sum_{j_{0}} m \alpha_{j_{0}}^{(0)} \tensor
    \beta_{j_{0}}^{(0)} x_{d} = \sum_{j_{0}} m \alpha_{j_{0}}^{(0)}
    \beta_{j_{0}}^{(0)} \tensor x_{d} = m \tensor x_{d} \ ;
  \end{multline*}
  the equality labelled~$(\diamond)$ makes use of
  Lemma~\ref{lem:more-pous}, and of the fact that summands of the
  form~$s_{k+1}$ do not depend on choice of the partition of unity
  involved. --- It remains to verify the equality
  $\sigma \circ \pi + (\iota - \mu) \circ \rho = \id$. For this, let
  $x \in \RR_{d}$ and $m \in M$, and calculate:
  \begin{multline*}
    (\iota - \mu) \circ \rho (m \tensor x_{d}) = %
    (\iota - \mu) \Big( \sum_{k=0}^{d-1} \sum_{j_{k}} m
    \alpha_{j_{k}}^{(k)} \tensor \beta_{j_{k}}^{(-k)} x_{d} \Big) \\ %
    =  \sum_{k=0}^{d-1} \sum_{j_{k}} m \alpha_{j_{k}}^{(k)} \tensor
    \beta_{j_{k}}^{(-k)} x_{d} %
    - \sum_{j} \sum_{k=0}^{d-1} \sum_{j_{k}} m \alpha_{j_{k}}^{(k)}
    \alpha^{(1)}_{j} \tensor \beta^{(-1)}_{j} \beta_{j_{k}}^{(-k)} x_{d} \\ %
    =  \sum_{k=0}^{d-1} \sum_{j_{k}} m \alpha_{j_{k}}^{(k)} \tensor
    \beta_{j_{k}}^{(-k)} x_{d} %
    - \sum_{k=0}^{d-1} \sum_{j_{k+1}} m \alpha_{j_{k+1}}^{(k+1)}
    \tensor \beta_{j_{k+1}}^{(-k-1)} x_{d} \\ %
    =  \sum_{j_{0}} m \alpha_{j_{0}}^{(0)} \tensor
    \beta_{j_{0}}^{(-0)} x_{d} %
    - \sum_{j_{d}} m \alpha_{j_{d}}^{(d)}
    \tensor \beta_{j_{d}}^{(-d)} x_{d} \\ %
    \qquad\qquad =  \sum_{j_{0}} m \alpha_{j_{0}}^{(0)} \beta_{j_{0}}^{(-0)}
    \tensor x_{d} %
    - \sum_{j_{d}} m \alpha_{j_{d}}^{(d)} \beta_{j_{d}}^{(-d)} x_{d}
    \tensor  1\\ %
    = m \tensor x_{d} - mx_{d} \tensor 1 = (\id - \sigma \circ \pi) (m
    \tensor x_{d}) \ . \qquad
  \end{multline*}
  This finishes the proof.
\end{proof}

\begin{definition}
  \label{def:half-torus}
  Let $C^{+}$ be a complex of $\Rpp$\nbd-modules.  The mapping cone
  $\htorusp (C^{+})$ of the map $\iota - \mu$,
  \begin{displaymath}
    \htorusp (C^{+}) = \cone \Big( C^{+} \tensor_{\RR_{0}} \Rp 1
    \rTo[l>=3em]^{\iota - \mu} C^{+} \tensor_{\RR_{0}} \Rp 0 \Big) \ ,
  \end{displaymath}
  is called the \textit{algebraic half-torus of~$C^{+}$}.
\end{definition}

\begin{corollary}
  \label{cor:C+-is-half-torus}
  Let $C^{+}$ be a complex of $\Rpp$\nbd-modules. The canonical map
  \begin{displaymath}
    \htorusp (C^{+}) = \cone \big( C^{+} \tensor_{\RR_{0}} \Rp 1
    \rTo[l>=3em]^{\iota - \mu} C^{+} \tensor_{\RR_{0}} \Rp 0 \big)
    \rTo C^{+}
  \end{displaymath}
  induced by the short exact sequence~\eqref{eq:ses} is a
  quasi-isomorphism. If $C^{+}$ is bounded below and consists of
  projective $\Rpp$\nbd-modules, the map is a homotopy equivalence of
  $\Rpp$\nbd-module complexes.
\end{corollary}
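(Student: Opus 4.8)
The plan is to derive both statements from the short exact sequence of Lemma~\ref{lem:hal-torus-sequence}, applied to the terms of~$C^{+}$ one degree at a time. First I would note that the maps $\iota$, $\mu$ and $\pi$ occurring there are natural in the $\Rpp$\nbd-module argument: for $\iota$ and $\pi$ this is immediate, and for $\mu$ it follows from the formula~\eqref{eq:mu} together with right $\Rpp$\nbd-linearity, since $g(m \alpha^{(1)}_{j}) = g(m)\alpha^{(1)}_{j}$ whenever $g$ is $\Rpp$\nbd-linear (because $\alpha^{(1)}_{j} \in \RR_{1} \subseteq \Rpp$). Letting $g$ range over the differentials of $C^{+}$, this naturality makes $\iota - \mu$ and $\pi$ into chain maps, so Lemma~\ref{lem:hal-torus-sequence} yields a short exact sequence of $\Rpp$\nbd-module complexes
\begin{equation*}
  0 \rTo C^{+} \tensor_{\RR_{0}} \Rp 1 \rTo[l>=3em]^{\iota - \mu} C^{+}
  \tensor_{\RR_{0}} \Rp 0 \rTo^{\pi} C^{+} \rTo 0 \ .
\end{equation*}

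For the first assertion, recall that $\htorusp(C^{+})$ is by definition $\cone(\iota - \mu)$ and that the canonical map to~$C^{+}$ sends a cone element $(a,b)$, with $b$ in the $\Rp 0$\nbd-summand, to $\pi(b)$; this is a chain map because $\pi$ is one and $\pi(\iota - \mu) = 0$, and it is surjective in every degree. Since $\iota - \mu$ is a degreewise injection, the kernel of this chain map in degree~$n$ consists of $(C^{+} \tensor_{\RR_{0}} \Rp 1)_{n-1}$ together with the image of the degree-$n$ component of $\iota - \mu$, and this kernel subcomplex is isomorphic to the mapping cone of the identity map of $C^{+} \tensor_{\RR_{0}} \Rp 1$, which is contractible. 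The long exact homology sequence of $0 \rTo \ker \rTo \htorusp(C^{+}) \rTo C^{+} \rTo 0$ then shows that $\htorusp(C^{+}) \rTo C^{+}$ is a quasi-isomorphism. (This is the familiar fact that the mapping cone of an injection of chain complexes is quasi-isomorphic to the cokernel.)

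For the second assertion I would show that, under the stated hypotheses, $\htorusp(C^{+})$ is itself a bounded-below complex of projective $\Rpp$\nbd-modules; the claim then follows from the classical fact that a quasi-isomorphism between bounded-below complexes of projectives is a chain homotopy equivalence, its mapping cone being a bounded-below acyclic complex of projectives and hence contractible. Boundedness below of $\htorusp(C^{+})$ is inherited from~$C^{+}$. For projectivity, note that by Corollary~\ref{cor:strong-means-fgp}(2) each $C^{+}_{n}$ is projective also as an $\RR_{0}$\nbd-module, so $C^{+}_{n} \tensor_{\RR_{0}} \Rp 0 = C^{+}_{n} \tensor_{\RR_{0}} \Rpp$ is projective over $\Rpp$ by base change along the inclusion $\RR_{0} \rTo^{\subset} \Rpp$; and using the isomorphism $\RR_{1} \tensor_{\RR_{0}} \Rpp \iso \Rp 1$ of right $\Rpp$\nbd-modules from Corollary~\ref{cor:strong-means-fgp}(3) (with $m = -1$) together with associativity of the tensor product, $C^{+}_{n} \tensor_{\RR_{0}} \Rp 1 \iso (C^{+}_{n} \tensor_{\RR_{0}} \RR_{1}) \tensor_{\RR_{0}} \Rpp$, which is again a base change to $\Rpp$ of a projective $\RR_{0}$\nbd-module, since $\RR_{1}$ is a finitely generated projective (indeed invertible) $\RR_{0}$\nbd-bimodule by Corollary~\ref{cor:strong-means-fgp}(1).

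I expect the one genuinely non-formal point to be this last one: the terms of the algebraic half-torus are formed by tensoring over $\RR_{0}$, not over $\Rpp$, so their projectivity over $\Rpp$ has to be established by hand, and this is exactly where the structural results of Corollary~\ref{cor:strong-means-fgp} for strongly graded rings — invertibility of $\RR_{1}$ and the identification $\RR_{1} \tensor_{\RR_{0}} \Rpp \iso \Rp 1$ — enter in an essential way. The remaining ingredients are a direct appeal to Lemma~\ref{lem:hal-torus-sequence} and routine homological algebra.
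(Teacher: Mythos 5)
Your proof is correct and takes essentially the same approach the paper intends when it cites ``standard homological algebra and Lemma~\ref{lem:hal-torus-sequence}'': you spell out the naturality needed to promote~\eqref{eq:ses} to a short exact sequence of complexes, the cone-of-an-injection argument for the quasi-isomorphism, and the projectivity of the half-torus terms over~$\Rpp$ so that the quasi-isomorphism between bounded-below complexes of projectives upgrades to a homotopy equivalence. A small streamlining is available at the last step: Corollary~\ref{cor:strong-means-fgp}(4) already records that $\Rp 1$ is finitely generated projective as a right $\Rpp$\nbd-module, so since $C^{+}_{n}$ is projective over~$\RR_{0}$ (a direct summand of a free $\RR_{0}$\nbd-module), the term $C^{+}_{n} \tensor_{\RR_{0}} \Rp 1$ is a direct summand of a direct sum of copies of~$\Rp 1$ and hence projective over~$\Rpp$ without the detour through $\RR_{1} \tensor_{\RR_{0}} \Rpp \iso \Rp 1$; but your route is equally valid.
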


\begin{proof}
  This is a direct consequence of standard homological algebra and
  Lemma~\ref{lem:hal-torus-sequence} above.
\end{proof}

The following result, though technical, is central to the theory of
finite domination. By the previous Corollary we can replace any
complex~$C^{+}$ of $\Rpp$\nbd-modules by an algebraic half-torus, up
to quasi-isomorphism; the \textsc{Mather} trick is the observation
that we can further replace the complex~$C^{+}$ within the mapping
cone of the half-torus construction by an $\RR_{0}$\nbd-module complex
homotopy equivalent to~$C^{+}$.

\begin{proposition}[The algebraic \textsc{Mather} trick for algebraic half-tori]
  \label{prop:Mather_trick}
  Let $\RR = \R$ be a strongly $\bZ$\nbd-graded ring, let $C^{+}$ be a
  complex of $\Rpp$\nbd-modules, and let $D$ be a complex
  of~$\RR_{0}$\nbd-modules. Let $\alpha \colon C^{+} \rTo D$ and
  $\beta \colon D \rTo C^{+}$ be mutually inverse chain homotopy
  equivalences of $\RR_{0}$\nbd-module complexes with
  $H \colon \id \simeq \alpha \beta$ a specified homotopy.
  Write~$\psi$ for the $\Rpp$\nbd-module complex map
  \begin{displaymath}
    \psi = (\alpha \tensor \id) \circ (\iota - \mu) \circ (\beta
    \tensor \id) \colon \quad D \tensor_{\RR_{0}} \Rp 1
    \rTo D \tensor_{\RR_{0}} \Rp 0 \ .
  \end{displaymath}
  Then the
  square diagram~\eqref{eq:diag} in Fig.~\ref{fig:Mather} commutes up
  to a preferred homotopy~$J$ induced by~$H$, given by the formula
  \begin{displaymath}
    J = (\alpha \tensor \id) \circ (\iota - \mu) \circ (H \tensor \id)
    \colon (\alpha \tensor \id) \circ (\iota - \mu) \simeq \psi \circ
    (\alpha \tensor \id) \ .
  \end{displaymath}
  The homotopy~$J$ induces a preferred chain map
  \begin{displaymath}
    \Xi \colon \htorusp(C^{+}) = \cone (\iota - \mu) \rTo \cone(\psi)
  \end{displaymath}
  which is a quasi-isomorphism. If both $C^{+}$ and~$D$ are bounded
  below complexes of projective $\RR_{0}$\nbd-modules, the map
  $\htorusp(C^{+}) \rTo \cone(\psi)$ is a homotopy equivalence of
  $\Rpp$\nbd-module complexes.
\end{proposition}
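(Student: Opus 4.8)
The plan is to read the statement as the standard principle that a homotopy\nbd-commutative square produces a canonical map between the mapping cones of its two horizontal arrows, which inherits (quasi-)isomorphism properties from the vertical arrows; here the vertical arrows are obtained by tensoring the $\RR_{0}$\nbd-homotopy equivalences $\alpha,\beta$ up to~$\Rpp$ along the $\RR_{0}$\nbd-$\Rpp$\nbd-bimodules $\Rp 0$ and~$\Rp 1$. First I would verify the displayed formula for~$J$. Since $(\beta \tensor \id) \circ (\alpha \tensor \id) = (\alpha\beta) \tensor \id$, where $\alpha\beta$ denotes the composite $C^{+} \rTo^{\alpha} D \rTo^{\beta} C^{+}$, one has
\begin{displaymath}
  (\alpha \tensor \id) \circ (\iota - \mu) - \psi \circ (\alpha \tensor \id)
  = (\alpha \tensor \id) \circ (\iota - \mu) \circ \big( (\id - \alpha\beta) \tensor \id \big) \ ;
\end{displaymath}
substituting the relation $\id - \alpha\beta = \partial H + H\partial$ carried by the specified homotopy~$H$ (tensored with $\id_{\Rp 1}$), and using that $\partial$ commutes with the chain maps $\iota - \mu$ and $\alpha \tensor \id$, one obtains precisely $\partial J + J \partial$ with $J = (\alpha \tensor \id) \circ (\iota - \mu) \circ (H \tensor \id)$. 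This is a routine computation and establishes that the square~\eqref{eq:diag} in Fig.~\ref{fig:Mather} commutes up to~$J$. Given~$J$, the induced map on cones is the lower triangular matrix
\begin{displaymath}
  \Xi = \begin{pmatrix} \alpha \tensor \id & 0 \\ \pm J & \alpha \tensor \id \end{pmatrix} \colon \quad \htorusp(C^{+}) = \cone(\iota - \mu) \rTo \cone(\psi) \ ,
\end{displaymath}
and a second routine check, using that $\alpha \tensor \id$ is a chain map together with the homotopy identity just established, shows that $\Xi$ commutes with the cone differentials. Since $H$ is part of the given data, $J$ and hence~$\Xi$ are canonical, which is the sense in which they are ``preferred''.

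Next I would show that $\Xi$ is a quasi-isomorphism. The mapping cone of any chain map $g \colon X \rTo Y$ sits in a short exact sequence $0 \rTo Y \rTo \cone(g) \rTo X[1] \rTo 0$ that is split in each degree and natural in~$g$; applying this to $g = \iota - \mu$ and to $g = \psi$ exhibits $\Xi$ as a morphism of degreewise-split short exact sequences with outer vertical maps $\alpha \tensor \id_{\Rp 0}$ and the shift of $\alpha \tensor \id_{\Rp 1}$. Because $\alpha \colon C^{+} \rTo D$ is a chain homotopy equivalence of $\RR_{0}$\nbd-module complexes with inverse~$\beta$, and because tensoring over~$\RR_{0}$ with the bimodules $\Rp 0$ and~$\Rp 1$ preserves compositions and chain homotopies, both outer vertical maps are chain homotopy equivalences of $\Rpp$\nbd-module complexes, in particular quasi-isomorphisms. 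Passing to the long exact homology sequences of the two rows and applying the five lemma shows that $\Xi$ is a quasi-isomorphism.

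Finally, under the extra hypotheses I would upgrade ``quasi-isomorphism'' to ``chain homotopy equivalence of $\Rpp$\nbd-module complexes'' by the standard fact that a quasi-isomorphism between bounded below complexes of projective modules is a chain homotopy equivalence; the work is to check that $\htorusp(C^{+})$ and $\cone(\psi)$ are bounded below complexes of projective $\Rpp$\nbd-modules. Boundedness below is immediate, as $C^{+}$ and~$D$ are bounded below and a mapping cone of a map of bounded below complexes is bounded below. For the projectivity, the crucial input is that $\Rp 0 = \Rpp$ and~$\Rp 1$ are finitely generated projective right $\Rpp$\nbd-modules (Corollary~\ref{cor:strong-means-fgp}(4))---and this is exactly where strong gradedness is used. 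Writing a projective right $\RR_{0}$\nbd-module~$P$ as a direct summand of a free module~$\RR_{0}^{(I)}$, the $\Rpp$\nbd-modules $P \tensor_{\RR_{0}} \Rp 0$ and $P \tensor_{\RR_{0}} \Rp 1$ are direct summands of coproducts of copies of~$\Rpp$ and of~$\Rp 1$ respectively, hence are projective over~$\Rpp$; applying this degreewise to $C^{+}$ and~$D$ and recalling that a mapping cone is a degreewise direct sum shows that $\htorusp(C^{+})$ and $\cone(\psi)$ are bounded below complexes of projective $\Rpp$\nbd-modules. Therefore $\Xi$, being a quasi-isomorphism of such complexes, is a chain homotopy equivalence of $\Rpp$\nbd-module complexes, as claimed. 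The one step that is not pure formality---the \emph{main obstacle}, such as it is---is this change-of-rings projectivity bookkeeping: ensuring that it is the strongly-graded input of Corollary~\ref{cor:strong-means-fgp} that licenses the passage from $\RR_{0}$\nbd-projectivity of $C^{+}$ and~$D$ to $\Rpp$\nbd-projectivity of the algebraic half-tori. The remaining ingredients (the mapping cone construction, the five lemma, and the homological algebra of projective complexes) are entirely standard.
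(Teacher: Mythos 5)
Your proposal is correct and follows essentially the same route as the paper: form the lower-triangular chain map $\Xi$ between mapping cones induced by the homotopy~$J$, and deduce that $\Xi$ is a quasi-isomorphism from the fact that its diagonal entries $\alpha \tensor \id$ are homotopy equivalences. You additionally spell out the paper's unstated final step --- that $\htorusp(C^{+})$ and $\cone(\psi)$ are bounded below complexes of projective $\Rpp$\nbd-modules, so a quasi-isomorphism between them is automatically an $\Rpp$\nbd-homotopy equivalence --- correctly identifying Corollary~\ref{cor:strong-means-fgp}(4) as the point where strong gradedness makes $\Rp 1$ projective over~$\Rpp$.
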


\begin{figure}[ht]
  \centering
  \begin{diagram}[small,LaTeXeqno]
    \label{eq:diag}
    C^{+} \tensor_{\RR_{0}} \Rp 1 & \rTo[l>=4em]^{\iota - \mu} & C^{+}
    \tensor_{\RR_{0}} \Rp 0 \\
    \dTo<{\alpha \tensor \id} && \dTo<{\alpha \tensor \id} \\
    D \tensor_{\RR_{0}} \Rp 1 & \rTo^{\psi} & D \tensor_{\RR_{0}} \Rp
    0
  \end{diagram}
  \caption{The \textsc{Mather} trick square}
  \label{fig:Mather}
\end{figure}

\begin{proof}
  By construction, $J$ is a homotopy from
  $(\alpha \tensor \id) \circ (\iota - \mu)$
  to~$\psi \circ (\alpha \tensor \id)$.
  Hence we obtain a chain map of the mapping cones of the horizontal
  maps in the diagram,%
  \begin{displaymath}
    \alpha_{*} = \begin{pmatrix} \alpha \tensor \id & 0 \\ J & \alpha
      \tensor \id \end{pmatrix} \colon
    \htorusp (C^{+}) = \cone (\iota - \mu) \rTo \cone (\psi) \ ;
  \end{displaymath}
  this map is a quasi-isomorphism since $\alpha$ is a homotopy
  equivalence (so the induced map on homology will be represented by a
  lower triangular matrix with isomorphisms on the main diagonal).
\end{proof}

\begin{corollary}
  \label{cor:fd-implies-fd}
  If $C^{+}$ is an $\RR_{0}$\nbd-finitely dominated bounded below
  chain complex of projective $\Rpp$\nbd-modules, then $C^{+}$ is
  $\Rpp$\nbd-finitely dominated, that is, $C^{+}$~is homotopy
  equivalent to a bounded complex of \textit{finitely generated}
  projective $\Rpp$\nbd-modules.
\end{corollary}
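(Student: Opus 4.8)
The plan is to combine three earlier results: the canonical resolution of Corollary~\ref{cor:C+-is-half-torus}, the \textsc{Mather} trick of Proposition~\ref{prop:Mather_trick}, and the finiteness statements for strongly graded rings collected in Corollary~\ref{cor:strong-means-fgp}. The idea is to replace $C^{+}$, up to $\Rpp$\nbd-homotopy equivalence, first by its algebraic half-torus and then by a half-torus built from a \emph{finite} $\RR_{0}$\nbd-model of~$C^{+}$, whose terms will automatically be finitely generated projective over~$\Rpp$.

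First I would note that, by Corollary~\ref{cor:strong-means-fgp}~(2), the bounded below complex $C^{+}$ of projective $\Rpp$\nbd-modules is also a bounded below complex of projective $\RR_{0}$\nbd-modules. Since $C^{+}$ is $\RR_{0}$\nbd-finitely dominated, the standard argument splitting the homotopy idempotent that arises from a domination (\cite[Proposition~3.2]{Ranicki-finiteness}) produces a \emph{bounded} complex $D$ of finitely generated projective $\RR_{0}$\nbd-modules together with mutually inverse chain homotopy equivalences $\alpha \colon C^{+} \rTo D$ and $\beta \colon D \rTo C^{+}$ of $\RR_{0}$\nbd-module complexes and a specified homotopy~$H$. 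In particular $D$ is itself bounded below and consists of projective $\RR_{0}$\nbd-modules.

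Next I would feed $\alpha,\beta,H$ into the \textsc{Mather} trick. Since both $C^{+}$ and $D$ are bounded below complexes of projective $\RR_{0}$\nbd-modules, Proposition~\ref{prop:Mather_trick} yields a homotopy equivalence of $\Rpp$\nbd-module complexes $\htorusp(C^{+}) \simeq \cone(\psi)$, where $\psi \colon D \tensor_{\RR_{0}} \Rp 1 \rTo D \tensor_{\RR_{0}} \Rp 0$. On the other hand Corollary~\ref{cor:C+-is-half-torus} gives a homotopy equivalence $C^{+} \simeq \htorusp(C^{+})$ of $\Rpp$\nbd-module complexes, again using that $C^{+}$ is bounded below and projective over~$\Rpp$. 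Composing, $C^{+}$ is homotopy equivalent, as an $\Rpp$\nbd-module complex, to $\cone(\psi)$. It then remains to identify the latter. Since $D$ is bounded, $\cone(\psi)$ is bounded; and for each $n$ the module $D_{n}$ is a retract of some $\RR_{0}^{\,k}$, so that $D_{n} \tensor_{\RR_{0}} \Rp 1$ is a retract of $(\Rp 1)^{k}$ and $D_{n} \tensor_{\RR_{0}} \Rp 0 = D_{n} \tensor_{\RR_{0}} \Rpp$ is a retract of $\Rpp^{\,k}$. By Corollary~\ref{cor:strong-means-fgp}~(4), taking $m=-1$ and $m=0$ respectively, the modules $\Rp 1$ and $\Rpp$ are finitely generated projective right $\Rpp$\nbd-modules, hence so are these two retracts. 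As each term of $\cone(\psi)$ is a finite direct sum of modules of these two shapes, $\cone(\psi)$ is a bounded complex of finitely generated projective $\Rpp$\nbd-modules, and consequently $C^{+}$ is $\Rpp$\nbd-finitely dominated.

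I expect the genuine content to be conceptual rather than computational: although $C^{+}$ may consist of \emph{infinitely} generated projective $\Rpp$\nbd-modules and need not be bounded above, the \textsc{Mather} trick lets one replace $C^{+}$ inside the half-torus cone by the bounded, finitely generated $\RR_{0}$\nbd-complex~$D$, after which the terms of the cone are finitely generated projective over~$\Rpp$ precisely because $\Rp 1$ has this property over~$\Rpp$ — and that is exactly where strong gradedness is indispensable. The only step requiring routine care is verifying that the bounded-below and projectivity hypotheses are in force, so that the quasi-isomorphisms of Corollary~\ref{cor:C+-is-half-torus} and Proposition~\ref{prop:Mather_trick} may be invoked as genuine $\Rpp$\nbd-homotopy equivalences.
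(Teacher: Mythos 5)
Your proposal is correct and takes essentially the same route as the paper's proof: obtain a bounded complex $D$ of finitely generated projective $\RR_{0}$\nbd-modules homotopy equivalent to $C^{+}$ over $\RR_{0}$, then apply Corollary~\ref{cor:C+-is-half-torus} and Proposition~\ref{prop:Mather_trick} to conclude $C^{+} \simeq \cone(\psi)$ as $\Rpp$\nbd-complexes, with $\cone(\psi)$ bounded and finitely generated projective over $\Rpp$. You spell out a few steps that the paper leaves implicit (the appeal to Corollary~\ref{cor:strong-means-fgp}~(2) to verify the projectivity hypothesis of Proposition~\ref{prop:Mather_trick}, and the retraction argument showing the terms of $\cone(\psi)$ are finitely generated projective over $\Rpp$ via Corollary~\ref{cor:strong-means-fgp}~(4)), but the argument is the same.
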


\begin{proof}
  As $C^{+}$ is $\RR_{0}$\nbd-finitely dominated we can choose an
  $\RR_{0}$\nbd-linear chain homotopy equivalence
  $\alpha \colon C^{+} \rTo D$ from~$C^{+}$ to a bounded complex~$D$
  of finitely generated projective $\RR_{0}$\nbd-modules. By
  Corollary~\ref{cor:C+-is-half-torus} and
  Proposition~\ref{prop:Mather_trick} there are quasi-isomorphisms
  \begin{equation}
    \label{eq:C+_is_cone_psi}
    C^{+} \lTo^{\simeq} \htorusp (C^{+}) \rTo^{\simeq} \cone (\psi)
  \end{equation}
  with
  $\psi \colon D \tensor_{\RR_{0}} \Rp 1 \rTo D \tensor_{\RR_{0}} \Rp
  0$
  as defined in Proposition~\ref{prop:Mather_trick} a map of bounded
  complexes of finitely generated projective $\Rpp$\nbd-modules. It
  follows that $C^{+}$ is quasi-isomorphic, hence homotopy equivalent,
  to a bounded complex of finitely generated projective
  $\Rpp$\nbd-modules as claimed.
\end{proof}

\section{Finite domination and homotopy finiteness}

It is an interesting question whether in the situation of
Corollary~\ref{cor:fd-implies-fd} the complex~$C^{+}$ is actually
$\Rpp$\nbd-\textit{homotopy finite}, that is, homotopy equivalent to a
bounded complex of finitely generated \textit{free}
$\Rpp$\nbd-modules. In general this turns out to be false; however,
when working with~$\R$ instead of~$\Rpp$ the analogous question has a
positive answer. --- As before, let $\RR = \R$ be a strongly
$\bZ$\nbd-graded ring.

\begin{proposition}
  Suppose that $C$ is a bounded complex of finitely generated
  projective $\R$\nbd-modules. If $C$ is $\RR_{0}$\nbd-finitely
  dominated, then $C$~is $\R$\nbd-homotopy finite, \ie, $C$~is
  homotopy equivalent to a bounded complex of finitely generated free
  $\R$\nbd-modules.
\end{proposition}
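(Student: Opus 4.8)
The plan is to show that the class $[C] = \sum_{i} (-1)^{i} [C_{i}]$ of~$C$ vanishes in the reduced group $\tilde K_{0}(\R)$. This suffices: a bounded complex~$E$ of finitely generated projective $\R$\nbd-modules whose class in $\tilde K_{0}(\R)$ is zero --- i.e.\ with $[E] \in \bZ \cdot [\R] \subseteq K_{0}(\R)$ --- is homotopy equivalent to a bounded complex of finitely generated \emph{free} $\R$\nbd-modules; for the non-obvious direction one absorbs projective complements into neighbouring modules so as to make every module of~$E$ except the bottom one free, observes that the bottom module is then stably free, and adjoins a contractible elementary complex $\R^{a} \rTo^{\id} \R^{a}$ to make it free as well (cf.\ \cite[Proposition~3.2]{Ranicki-finiteness}). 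It therefore suffices to exhibit a bounded complex of finitely generated projective $\R$\nbd-modules, homotopy equivalent to~$C$, which is the mapping cone of a \emph{self-map}: the cone of an endomorphism $\psi \colon E \rTo E$ of a bounded complex~$E$ has Euler characteristic $\sum_{n}(-1)^{n}\big([E_{n-1}] + [E_{n}]\big) = -[E] + [E] = 0$. I would produce such a cone by means of the full algebraic mapping torus over~$\R$, which stands to the present statement as the algebraic half-torus~$\htorusp$ of Definition~\ref{def:half-torus} stands to Corollary~\ref{cor:fd-implies-fd}.

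Concretely: by Corollary~\ref{cor:strong-means-fgp}~(2) the restriction of~$C$ along $\RR_{0} \rTo^{\subset} \R$ is a bounded complex of projective $\RR_{0}$\nbd-modules, and it is $\RR_{0}$\nbd-finitely dominated by hypothesis, so it admits an $\RR_{0}$\nbd-linear homotopy equivalence $\alpha \colon C \rTo D$ onto a bounded complex~$D$ of finitely generated projective $\RR_{0}$\nbd-modules, with homotopy inverse~$\beta$ and a chosen homotopy $H \colon \id_{D} \simeq \alpha\beta$. Next I would set up the full torus: repeating the construction of Lemma~\ref{lem:hal-torus-sequence} with $\Rp 1$ replaced everywhere by $\Rp 0 = \R$ (permissible since $t\R = \R$) and the inclusion~$\iota$ replaced by~$\id$, one obtains for every $\R$\nbd-module complex~$X$ a short exact sequence of $\R$\nbd-module complexes
\begin{equation*}
  0 \rTo X \tensor_{\RR_{0}} \R \rTo[l>=3em]^{\id - \nu} X
  \tensor_{\RR_{0}} \R \rTo^{\pi} X \rTo 0 \ ,
\end{equation*}
with $\nu(x \tensor y) = \sum_{j} x\alpha^{(1)}_{j} \tensor \beta^{(-1)}_{j} y$ and $\pi(x \tensor y) = xy$, all tensor products being taken over the restriction of~$X$ to~$\RR_{0}$. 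Writing $\torus(X) = \cone(\id - \nu)$, the argument of Corollary~\ref{cor:C+-is-half-torus} shows that the canonical map $\torus(X) \rTo X$ is a quasi-isomorphism, and a homotopy equivalence if $X$ is a bounded below complex of projective $\R$\nbd-modules. Finally, feeding $\alpha$, $\beta$ and~$H$ into the proof of Proposition~\ref{prop:Mather_trick}, again \emph{mutatis mutandis}, I would obtain a homotopy equivalence of $\R$\nbd-module complexes
\begin{equation*}
  \torus(C) \rTo^{\simeq} \cone(\psi) \ , \qquad \psi = (\alpha
  \tensor \id) \circ (\id - \nu) \circ (\beta \tensor \id) \colon D
  \tensor_{\RR_{0}} \R \rTo D \tensor_{\RR_{0}} \R \ ,
\end{equation*}
using that $C \tensor_{\RR_{0}} \R$ and $E := D \tensor_{\RR_{0}} \R$ are bounded below complexes of projective, respectively finitely generated projective, $\R$\nbd-modules (Corollary~\ref{cor:strong-means-fgp}).

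Composing with the homotopy equivalence $\torus(C) \rTo^{\simeq} C$ then yields $C \simeq \cone(\psi)$, a bounded complex of finitely generated projective $\R$\nbd-modules with $[\cone(\psi)] = -[E] + [E] = 0$ in $K_{0}(\R)$; by the criterion recorded above, $\cone(\psi)$ --- and hence~$C$ --- is homotopy equivalent to a bounded complex of finitely generated free $\R$\nbd-modules. Beyond the (routine) verification of the $\R$\nbd-versions of Lemma~\ref{lem:hal-torus-sequence}, Corollary~\ref{cor:C+-is-half-torus} and Proposition~\ref{prop:Mather_trick}, I expect the main point --- rather than an obstacle --- to be recognising \emph{why} the move from~$\Rpp$ to~$\R$ is what makes the argument succeed: over~$\Rpp$ the half-torus of Corollary~\ref{cor:fd-implies-fd} only identifies~$C^{+}$ with $\cone\big(\psi \colon D \tensor_{\RR_{0}} \Rp 1 \to D \tensor_{\RR_{0}} \Rp 0\big)$, whose Euler characteristic $[D \tensor_{\RR_{0}} \Rp 0] - [D \tensor_{\RR_{0}} \Rp 1]$ need not vanish in $\tilde K_{0}(\Rpp)$ because $\Rp 1$ is in general a non-trivial invertible $\Rpp$\nbd-bimodule --- precisely the mechanism behind the counterexample announced in the abstract. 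Inverting~$t$ collapses $\Rp 1$ onto $\Rp 0 = \R$, so the two summands coincide and cancel.
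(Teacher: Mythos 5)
Your approach is essentially the same as the paper's: both realise $C$, up to $\R$\nbd-linear homotopy equivalence, as the mapping cone of a self-map of $D \tensor_{\RR_{0}} \R$, so that the finiteness obstruction $[C] \in \tilde K_{0}(\R)$ vanishes. The only real difference is that the paper simply cites the Mather trick for algebraic tori from \cite[Lemma~3.7]{MR3704930}, whereas you reconstruct the torus $\torus(X) = \cone(\id - \nu)$ and its Mather trick from the half-torus machinery already in the paper (which is legitimate, though the splitting of the torus short exact sequence requires a two-sided truncation over negative \emph{and} positive degrees rather than a verbatim copy of Lemma~\ref{lem:hal-torus-sequence}), and you also spell out the standard argument that vanishing of $[C]$ in $\tilde K_{0}(\R)$ forces homotopy finiteness, which the paper leaves implicit.
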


\begin{proof}
  Let $D$ be a bounded chain complex of finitely generated projective
  $\RR_{0}$\nbd-modules chain homotopy equivalent to~$C$. Then by the
  \textsc{Mather} trick for algebraic tori
  \cite[Lemma~3.7]{MR3704930}, $C$~is homotopy equivalent, as an
  $\R$\nbd-module complex, to the mapping cone of a certain self map
  of the induced complex $D \tensor_{\RR_{0}} \R$. Hence the
  finiteness obstruction of~$C$ in $\tilde K_{0} \big( \R \big)$
  vanishes whence $C$~is $\R$\nbd-homotopy finite.
\end{proof}

The analogous statement holds over a \textit{polynomial}
ring~$\RR_{0}[t]$ with a central indeterminate~$t$: \textit{An
  $\RR_{0}$\nbd-finitely dominated, bounded $\RR_{0}[t]$\nbd-module
  complex~$C^{+}$ of finitely generated projective modules is
  $\RR_{0}[t]$\nbd-homotopy finite.}  For $C^{+} \simeq \cone(\psi)$
as in~\eqref{eq:C+_is_cone_psi} and~\eqref{eq:diag}, and the
finiteness obstruction of the mapping cone vanishes since
$t\RR_{0}[t] \iso \RR_{0}[t]$. --- In general, however, this line of
reasoning fails when working over~$\Rpp$.

\begin{example}
  \textit{There exist a strongly $\bZ$\nbd-graded ring $\RR = \R$
    together with a bounded complex~$C^{+}$ of finitely generated
    projective $\Rpp$\nbd-modules such that $C^{+}$ is
    $\RR_{0}$\nbd-finitely dominated but not homotopy equivalent to a
    bounded complex of finitely generated free $\Rpp$\nbd-modules.}
  Specifically\footnote{I am indebted to R.~Hazrat for bringing this
    example to my attention.}, let $K$ be a field and let $\RR = \R$
  be the \textsc{Leavitt} $K$\nbd-algebra of type~$(1,1)$, that is,
  the (non-commutative) $K$\nbd-algebra on generators $A$, $B$, $C$,
  $D$ subject to the relations
  \begin{displaymath}
     AB+CD=1 \ , \quad BA=DC=1 \ , \quad BC=DA=0 \ ;
  \end{displaymath}
  we declare that $A$ and~$C$ have degree~$-1$, while $B$ and~$D$ are
  given degree~$1$. This is a $\bZ$\nbd-graded ring since all
  relations are homogeneous of degree~$0$. It is strongly graded by
  Corollary~\ref{cor:pou-strong} as the relations $AB+CD=1$ and $BA=1$
  provide partitions of unity of types~$(-1,1)$ and~$(1,-1)$,
  respectively. It is known that $\RR_{0}$ can be identified with an
  increasing union $\bigcup_{n \geq 0} \mathbf{Mat}_{2^n}(K)$ of
  matrix algebras, using the block-diagonal embeddings
  $x \mapsto \left( \begin{smallmatrix} x&0 \\ 0 & x \end{smallmatrix}
  \right)$.
  It follows that $\RR_{0}$ has IBN, and since the projection map
  $\Rpp = \bigoplus_{k \geq 0} \RR_{k} \rTo \RR_{0}$ is a ring
  homomorphism, so does $\Rpp$. --- The $\Rpp$\nbd-module $Q = \Rp{1}$
  is finitely generated projective by
  Corollary~\ref{cor:strong-means-fgp}~(3), and the map
  \begin{equation}
    \label{eq:map_iso}
    \Rpp \rTo^{\iso} Q \oplus Q \ , \quad r \mapsto (Br,\, Dr)
  \end{equation}
  is an isomorphism of $\Rpp$\nbd-modules with inverse
  $(x,y) \mapsto Ax + Cy$. In addition, $Q$~is not stably free: if
  $Q \oplus \Rpp^{m} \iso \Rpp^{n}$, then by~\eqref{eq:map_iso} also
  \begin{displaymath}
    \Rpp^{2n} \iso \big( Q \oplus \Rpp^{m} \big) \oplus \big( Q \oplus
    \Rpp^{m} \big) \iso \Rpp^{2m+1} \ ;
  \end{displaymath}
  as $\Rpp$ has IBN, the inequality $2n \neq 2m+1$ renders this
  impossible. The class of~$Q$ in $\tilde K_{0} \big( \Rpp \big)$ is
  thus non-zero, and has in fact order~$2$ in view of the
  isomorphism~\eqref{eq:map_iso} . Thus the inclusion map
  $Q \rTo \Rpp$, considered as a chain complex~$C^{+}$, is an example
  of a bounded complex of finitely generated projective
  $\Rpp$\nbd-modules not homotopy equivalent to a bounded complex of
  finitely generated free $\Rpp$\nbd-modules. On the other hand, the
  complex~$C^{+}$ is certainly $\RR_{0}$\nbd-finitely dominated since
  the cokernel of the inclusion map $Q = \Rp {1} \rTo \Rpp$ is
  isomorphic to~$\RR_{0}$.
\end{example}

\section{$R_{0}$-finite domination of $\Rpp$-module complexes}

We now develop a homological criterion to detect if a chain
complex~$C^{+}$ of $\Rpp$\nbd-modules is $\RR_{0}$\nbd-finitely
dominated when considered as a complex of $\RR_{0}$\nbd-modules
\textit{via} restriction of scalars. This happens if and only if
$C^{+}$~has trivial \textsc{Novikov} homology in the sense that the
induced chain complex $C^{+} \tensor_{\Rpp} \Novm$ is acyclic.

\begin{theorem}
  \label{thm:R0-fd-R+}
  Let $\RR = \R$ be a strongly $\bZ$\nbd-graded ring, and let $C^{+}$ be
  a bounded chain complex of finitely generated projective
  $\Rpp$-modules. The following statements are equivalent:
  \begin{enumerate}
  \item The complex $C^{+}$ is $\RR_{0}$-finitely dominated.
  \item The complex $C^{+} \tensor_{\Rpp} \Novm$ is contractible (\ie,
    $C^{+}$ has trivial \textsc{Novikov} homology).
  \end{enumerate}
\end{theorem}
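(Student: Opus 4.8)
The plan is to prove the two implications separately, with the half-torus/Mather-trick machinery of the previous section carrying the load for $(1)\Rightarrow(2)$ and a more delicate argument — assembling a finite domination from a Novikov contraction — needed for $(2)\Rightarrow(1)$.

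For $(1)\Rightarrow(2)$, I would start from Corollary~\ref{cor:C+-is-half-torus} and Proposition~\ref{prop:Mather_trick}: if $C^{+}$ is $\RR_{0}$\nbd-finitely dominated, choose an $\RR_{0}$\nbd-linear homotopy equivalence $\alpha\colon C^{+}\rTo D$ with $D$ a bounded complex of finitely generated projective $\RR_{0}$\nbd-modules, so that $C^{+}\simeq\cone(\psi)$ over~$\Rpp$, where $\psi\colon D\tensor_{\RR_{0}}\Rp 1\rTo D\tensor_{\RR_{0}}\Rp 0$. Now tensor with $\Novm$ over~$\Rpp$. Using Corollary~\ref{cor:strong-means-fgp}, one has $\Rp 1\tensor_{\Rpp}\Novm\iso\Novm$ and $\Rp 0\tensor_{\Rpp}\Novm=\Novm$, and under these identifications $\iota\tensor\id$ becomes multiplication by~$t$ (up to the canonical isomorphism, as in the discussion of $\zeta$ and~$\tau$), while $\mu\tensor\id$ becomes an $\RR_{0}$\nbd-isomorphism followed by multiplication by~$t$ of the form given in~\eqref{eq:mu}. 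The key computation is that $\iota-\mu$, after tensoring with $\Novm$, becomes an isomorphism: one writes $(\iota-\mu)\tensor\id = \iota\tensor\id - \mu\tensor\id$ as a map $D\tensor_{\RR_{0}}\Novm\rTo D\tensor_{\RR_{0}}\Novm$ and exhibits an explicit inverse by the geometric-series trick available in the Novikov ring $\Novm$ (negative powers of~$t$ are allowed, and $\mu$ strictly raises the $t$\nbd-degree so $\sum_{k\geq 0}(\text{shift})^{k}$ converges in $\Novm$). Hence $\cone(\psi)\tensor_{\Rpp}\Novm\iso\cone\big((\iota-\mu)\tensor\id\big)$ is the cone of an isomorphism, hence contractible; since $C^{+}\tensor_{\Rpp}\Novm\simeq\cone(\psi)\tensor_{\Rpp}\Novm$, we are done.

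For $(2)\Rightarrow(1)$, the idea is to run the half-torus identification in reverse. Again $C^{+}\simeq\htorusp(C^{+})=\cone(\iota-\mu)$ over~$\Rpp$ by Corollary~\ref{cor:C+-is-half-torus}, with $\iota-\mu\colon C^{+}\tensor_{\RR_{0}}\Rp 1\rTo C^{+}\tensor_{\RR_{0}}\Rp 0$; since $C^{+}$ is a bounded complex of finitely generated projective $\Rpp$\nbd-modules, by Corollary~\ref{cor:strong-means-fgp}~(2) it is a complex of projective $\RR_{0}$\nbd-modules, and both $C^{+}\tensor_{\RR_{0}}\Rp 1$ and $C^{+}\tensor_{\RR_{0}}\Rp 0$ are bounded below complexes of projective $\RR_{0}$\nbd-modules. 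The hypothesis says $C^{+}\tensor_{\Rpp}\Novm$ is acyclic. I would now use this to produce an $\RR_{0}$\nbd-finite domination of $C^{+}$ by the standard Ranicki-type splitting: the acyclicity of the Novikov complex, combined with a complementary statement about $C^{+}\tensor_{\Rpp}\R$ (or a direct analysis of how $\Novm$ and the companion ring $\Novp$ cover~$\R$ — more precisely, the Mayer--Vietoris / "Novikov covers $\R$" principle that $\R\rTo\Novp\oplus\Novm$ has cone related to $\R$ itself up to a shift), lets one conclude that $C^{+}$, as an $\RR_{0}$\nbd-module complex, is quasi-isomorphic to a bounded complex of finitely generated projective $\RR_{0}$\nbd-modules. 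Concretely, I expect the cleanest route is: show $C^{+}\tensor_{\Rpp}\Novm$ acyclic forces the map $\iota-\mu$ to become a quasi-isomorphism after $\tensor_{\RR_{0}}\Rnn$ (or its completion), and then feed this into the algebraic-torus domination results of \cite{MR3704930} to extract a bounded finitely-dominated model over~$\RR_{0}$.

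The main obstacle will be the direction $(2)\Rightarrow(1)$: unlike $(1)\Rightarrow(2)$, which is a direct homotopy-invariance computation, here I must \emph{construct} a finite domination from the vanishing of a single Novikov homology group, and the honest argument almost certainly requires the companion Novikov ring $\Novp$ and a Mayer--Vietoris square relating $\Rpp$, $\Psp$, $\Novm$ (and $\R$) so that acyclicity of $C^{+}\tensor_{\Rpp}\Novm$ together with the automatic contractibility of $C^{+}\tensor_{\Rpp}\Psp$-type pieces yields a bounded projective $\RR_{0}$\nbd-model. Verifying that this square is homotopy cartesian on $C^{+}$ — and that the resulting object is genuinely a \emph{bounded, finitely generated projective} $\RR_{0}$\nbd-complex rather than merely $\RR_{0}$\nbd-quasi-isomorphic to something with good finiteness — is the delicate point, and is presumably where the strong-grading hypothesis (via Corollary~\ref{cor:strong-means-fgp} and Lemma~\ref{lem:mult_iso}) is used most essentially.
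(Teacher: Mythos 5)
Your plan for $(1)\Rightarrow(2)$ tracks the paper's opening moves (half-torus, Mather trick, $C^{+}\simeq\cone(\psi)$), but the claim that $\psi\tensor_{\Rpp}\Novm$ has an \emph{explicit inverse} by a geometric series is not correct as stated. After the identifications you describe, $\psi\tensor_{\Rpp}\Novm$ sends $x\tensor\sum r_{j}t^{j}$ to $\alpha\beta(x)\tensor\sum r_{j}t^{j}$ minus a term that strictly lowers the $t$\nbd-degree; the ``diagonal part'' of the map is therefore $\alpha\beta\tensor\id$, and $\alpha\beta$ is only chain homotopic to the identity on $D$, not an isomorphism. So there is no literal $\sum_{k\geq 0}(\text{shift})^{k}$ inverse and $\psi\tensor\Novm$ need not be an isomorphism at all. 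What is true --- and what the paper proves --- is that $\cone(\psi)\tensor_{\Rpp}\Novm$ is \emph{acyclic}: one rewrites it as the right-truncated totalisation of a double complex $Z_{p,q}$ whose $p$\nbd-th column is a shifted suspension of $\cone(\alpha\beta)\tensor_{\RR_{0}}\RR_{p}$, hence acyclic, and invokes \cite[Proposition~1.2]{MR2978500}. Your ``geometric-series'' intuition is the right picture, but the honest version is a filtration/column-acyclicity argument, not an invertibility statement; you should phrase it that way, and you must keep track of the fact that the identity is replaced by $\alpha\beta$.

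For $(2)\Rightarrow(1)$ your proposal is substantially different from the paper's argument and, as you yourself concede, not yet a proof. The paper does \emph{not} invoke the companion Novikov ring $\Novp$ nor a Mayer--Vietoris square, and it does not feed anything back into the algebraic-torus results of \cite{MR3704930}. Instead it is an elementary splitting argument: stabilise $C^{+}$ to a bounded complex $A^{+}=B^{+}\oplus C^{+}$ of finitely generated \emph{free} $\Rpp$\nbd-modules with $B^{+}$ having zero differentials; choose integers $d_{k}$ so that the matrix differentials $D_{k}$ also define a complex $S$ with $S_{k}=t^{d_{k}}\Psm^{r_{k}}$; use the degreewise short exact sequence
\begin{displaymath}
  0 \rTo t^{d}\Psm \oplus \Rpp \rTo \Novm \rTo \bigoplus_{j=d+1}^{-1}\RR_{j} \rTo 0
\end{displaymath}
to obtain a short exact sequence of $\RR_{0}$\nbd-complexes $0\rTo S\oplus A^{+}\rTo N\rTo P\rTo 0$ with $N=A^{+}\tensor_{\Rpp}\Novm$ and $P$ a bounded complex of finitely generated projective $\RR_{0}$\nbd-modules (using strong grading via Corollary~\ref{cor:strong-means-fgp}). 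Then $P$ is quasi-isomorphic to the cone of the inclusion; using the hypothesis that $C^{+}\tensor_{\Rpp}\Novm$ is contractible one replaces $N$ by $B^{+}\tensor_{\Rpp}\Novm$, which makes the inclusion zero on the $C^{+}$\nbd-summand, so that $C^{+}$ is a direct summand of the cone. Hence $C^{+}$ is a retract of $P$ up to homotopy, i.e.\ $\RR_{0}$\nbd-finitely dominated. You should try to carry out the splitting/retraction argument directly; the two-sided Novikov Mayer--Vietoris you gesture at is a genuinely different (and in this one-sided situation considerably heavier) route, and nothing in your sketch shows that your proposed homotopy cartesian square actually produces a \emph{bounded, finitely generated projective} $\RR_{0}$\nbd-model rather than something merely quasi-isomorphic to one.
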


\begin{proof}
  (1)~$\Rightarrow$~(2): As $C^{+}$ is $\RR_{0}$\nbd-finitely
  dominated, we find a bounded complex~$D$ of finitely generated
  projective $\RR_{0}$\nbd-modules, and mutually inverse
  $\RR_{0}$\nbd-linear chain homotopy equivalences
  $\alpha \colon C^{+} \rTo D$ and $\beta \colon D \rTo C^{+}$. Let
  $\psi$ be as in Proposition~\ref{prop:Mather_trick}; together with
  Corollary~\ref{cor:C+-is-half-torus}, the \textsc{Mather} trick
  asserts that the $\Rpp$\nbd-module complexes $C^{+}$ and
  $\cone(\psi)$ are quasi-iso\-mor\-phic and thus are chain homotopy
  equivalent (as both are bounded below and consist of projective
  $\Rpp$\nbd-modules). Thus $C^{+} \tensor_{\Rpp} \Novm$ is homotopy
  equivalent to $\cone(\psi) \tensor_{\Rpp} \Novm$. The latter complex
  in turn is isomorphic to the mapping cone of the chain map
  \begin{displaymath} %!
    D \tensor_{\RR_{0}} \Novm \rTo D \tensor_{\RR_{0}} \Novm
  \end{displaymath}
  sending the element $x \tensor \sum_{i \leq k} r_{i} t^{i}$ to
  \begin{displaymath}
    \alpha\beta(x) \tensor \sum_{j \leq k} r_{j} t^{j} - \sum_{j}
    \alpha \big( \beta(x) \alpha_{j}^{(1)} \big) \tensor \sum_{j \leq
      k} \beta_{j}^{(-1)} r_{j} t^{j-1} \ ,
  \end{displaymath}
  where we write elements of $\Novm$ as formal \textsc{Laurent} series
  in~$t\inv$; note that in the target of the map,
  $\beta_{j}^{(-1)} r_{j}$ is the coefficient of~$t^{j-1}$ as
  $\beta_{j}^{(-1)}$ has degree~$-1$.

  As $D$ consists of finitely presented $\RR_{0}$\nbd-modules, we can
  identify both the tensor products
  \begin{gather*}
    D \tensor_{\RR_{0}} \Rpp \tensor_{\Rpp} \Novm = D
    \tensor_{\RR_{0}} \Novm \\ %
    \intertext{and} %
    D \tensor_{\RR_{0}} t^{1} \Rpp \tensor_{\Rpp} \Novm = D
    \tensor_{\RR_{0}} \Novm %
  \end{gather*}
  with the twisted right-truncated power of~$D$
  \cite[Proposition~3.13]{MR3704930}, that is,
  \begin{displaymath}
    D \tensor_{\RR_{0}} \Novm \iso \prod_{n \leq 0} \big( D
    \tensor_{\RR_{0}} \RR_{n} \big) \,\oplus\, \bigoplus_{n > 0} \big(
    D \tensor_{\RR_{0}} \RR_{n} \big) \ .
  \end{displaymath}
  Thus we rewrite $\cone(\psi) \tensor_{\Rpp} \Novm$ as the
  right-truncated totalisation \cite[Definition~1.1]{MR2978500} of a
  double complex
  \begin{displaymath}
    Z_{p,q} = (D_{p+q+1} \tensor_{\RR_{0}} \RR_{p}) \, \oplus \,
    (D_{p+q} \tensor_{\RR_{0}} \RR_{p}) %
  \end{displaymath}
  with vertical differential $d^v \colon Z_{p,q} \rTo Z_{p, q-1}$ and
  horizontal differential $d^h \colon Z_{p,q} \rTo Z_{p-1, q}$ given
  by the formul\ae{}
  \begin{gather*}
    d^v (x \tensor a, y \tensor b ) = \big( -d(x) \tensor a, \alpha
    \beta (x)
    \tensor a + d(y) \tensor b \big) \ , \\
    d^h (x \tensor a, y \tensor b) = \Big( -\sum \alpha \big( \beta(y)
    \alpha_{j}^{(1)} \big) \tensor \beta_{j}^{(-1)} b, 0 \Big) \ .
  \end{gather*}
  The symbol ``$d$'', without any decorations, refers to the
  differential of the chain complex~$D$. The columns are acyclic since
  $Z_{p, *}$ is a shift suspension of
  $\cone(\alpha\beta) \tensor_{\RR_{0}} \RR_{p}$ and the chain map
  $\alpha\beta$ is homotopic to an identity map. It follows that
  $C^{+} \tensor_{\Rpp} \Novm \simeq \cone(\psi) \tensor_{\Rpp} \Novm$
  is acyclic \cite[Proposition~1.2]{MR2978500}, and hence
  contractible.

  \medbreak

  (2)~$\Rightarrow$~(1): As $C^{+}$ consists of finitely generated
  projective $\Rpp$\nbd-mod\-ules, there exists another bounded
  complex~$B^{+}$ with zero differentials, consisting of finitely
  generated projective $\Rpp$\nbd-modules, such that
  $A^{+} = B^{+} \oplus C^{+}$ is a bounded complex of finitely
  generated free $\Rpp$\nbd-modules. We equip $A^{+}_{k}$ with a basis
  with $r_{k}$ elements, and identify $A^{+}_{k}$ with~$\Rpp^{r_{k}}$
  henceforth. The differential $d_{k} \colon A^{+}_{k} \rTo
  A^{+}_{k-1}$ is thus represented by a matrix~$D_{k}$ with entries
  in~$\Rpp$.

  Suppose, for ease of notation, that $C^{+}$ is concentrated in chain
  levels between~$0$ and~$m$. We can choose integers
  \begin{displaymath}
    d_{m} \leq d_{m-1} \leq \ldots \leq d_{0} = -1
  \end{displaymath}
  so that $D_{k}$ defines a map
  $d^{-}_{k} \colon t^{d_{k}} \Psm^{r_{k}} \rTo t^{d_{k-1}}
  \Psm^{r_{k-1}}$;
  we only need to ensure that no entry of~$D_{k}$ has a non-zero
  homogeneous component of degree exceeding $d_{k-1}-d_{k}$. We let
  $S$ denote the chain complex thus defined, with $S_{k} = t^{d_{k}}
  \Psm^{r_{k}}$ and differentials~$D_{k}$. Similarly, we let $N$ denote
  the chain complex with $N_{k} = \Novm^{r_{k}}$ and
  differentials~$D_{k}$. Note that $S$ is a subcomplex of~$N$.

  For any $d \leq -1$ there is a short exact sequence of
  $\RR_{0}$\nbd-modules
  \begin{equation}
    \label{seq:1}
    0 \rTo t^{d} \Psm \oplus \Rpp
    \rTo[l>=3em]^{(-1 \ 1)} \Novm \rTo \bigoplus_{j=d+1}^{-1} \RR_{j}
    \rTo 0 \ %
  \end{equation}
  with last term a finitely generated projective
  $\RR_{0}$\nbd-module by Corollary~\ref{cor:strong-means-fgp}, as
  $\R$~is strongly graded. It follows that there is a short exact
  sequence of $\RR_{0}$\nbd-module complexes
  \begin{equation}
    \label{seq:2}
    0 \rTo S \oplus A^{+} \rTo[l>=4em]^{(-1 \ 1)}_{\beta} N
    \rTo P \rTo 0 
  \end{equation}
  with $P$ a bounded complex of finitely generated projective
  $\RR_{0}$\nbd-modules. In chain degree~$k$ this sequence is actually
  just the $r_{k}$\nbd-fold direct sum of~\eqref{seq:1} with itself,
  for $d = d_{k}$.

  From the sequence~\eqref{seq:2} we infer that the map from the
  mapping cone of~$\beta$ to~$P$ is a quasi-isomorphism. Now recall
  $A^{+} = B^{+} \oplus C^{+}$ and observe the consequent splitting
  \begin{equation}
    \label{eq:2}
    N  = A^{+} \tensor_{\Rpp} \Novm = B^{+} \tensor_{\Rpp} \Novm \ %
    \oplus \ C^{+} \tensor_{\Rpp} \Novm \ . 
  \end{equation}
  By hypothesis $C^{+} \tensor_{\Rpp} \Novm$ is contractible; thus $N$
  is quasi-isomorphic, \textit{via} the projection map, to
  $B^{+} \tensor_{\Rpp} \Novm$. As taking mapping cones is homotopy
  invariant, we can replace~$N$ by the latter complex and conclude
  that $P$ is quasi-isomorphic to the mapping cone of the map
  \begin{displaymath}
    \gamma \colon S \oplus A^{+} = S \oplus B^{+} \oplus C^{+} %
    \rTo[l>=4em]^{(-1 \ 1 \ 0)} B^{+} \tensor_{\Rpp} \Novm \ .
  \end{displaymath}
  As $\gamma$ is the zero map on the $C^{+}$\nbd-summand, the mapping
  cone of~$\gamma$ contains~$C^{+}$ as a direct summand.  Hence in the
  derived category of the ring~$\RR_{0}$, the complex $C^{+}$ is a
  retract of~$P$. Since both complexes are bounded and consist of
  projective $\RR_{0}$\nbd-modules, we conclude that $C^{+}$ is a
  retract up to homotopy of~$P$ whence $C^{+}$ is
  $\RR_{0}$\nbd-finitely dominated as claimed.
\end{proof}

\section{$\Rpp$-\textsc{Fredholm} matrices}
\label{sec:rpp-Fredhold-matrices}

Let $\RR = \R$ be a $\bZ$-graded ring, and let $A^{+}$ be a non-zero
square matrix of size~$k$ with entries in~$\R$. For suitable
$m \in \bZ$, multiplication by~$A^{+}$ defines an $\Rpp$\nbd-module
homomorphism
\begin{equation*}
%  \label{eq:suitable}
  A^{+} = \mu (A^{+}, m) \colon \Rpp^{k} \rTo \big( t^{-m} \Rpp
  \big)^{k} \ , \quad x \mapsto A^{+} \cdot x \ ;
\end{equation*}
``suitable'' means, in fact, that $-m$ is not larger than the minimal
degree of non-zero homogeneous components of entries
of~$A^{+}$. Suppose now that in addition to such~$m$ we fix an integer
$n>m$ so that the map $\mu(A^{+}, n)$ is defined as well.

\begin{lemma}
  \label{lem:m-n-no-matter}
  There is an isomorphism of $\RR_{0}$\nbd-modules
  \begin{displaymath}
    \coker \mu(A^{+}, n) \iso \coker \mu(A^{+}, m) \oplus
    \bigoplus_{k=-n}^{-m-1} \RR_{k} \ .
  \end{displaymath}
\end{lemma}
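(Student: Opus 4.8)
The plan is to exploit the fact that multiplication by $A^{+}$ has \emph{the same image} whether we record it as a map into $\big(t^{-n}\Rpp\big)^{k}$ or into $\big(t^{-m}\Rpp\big)^{k}$, and then to peel off the finitely many homogeneous degrees by which the two codomains differ. First I would unwind the hypotheses: ``suitable $m$'' means that no entry of $A^{+}$ has a non-zero homogeneous component of degree below $-m$, which is exactly the statement $A^{+}\cdot\Rpp^{k}\subseteq\big(t^{-m}\Rpp\big)^{k}$; since $n>m$ this in turn gives $A^{+}\cdot\Rpp^{k}\subseteq\big(t^{-m}\Rpp\big)^{k}\subseteq\big(t^{-n}\Rpp\big)^{k}$. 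Hence the image of $\mu(A^{+},n)$ inside $\big(t^{-n}\Rpp\big)^{k}$ is precisely the $\Rpp$-submodule $A^{+}\Rpp^{k}$ --- the very submodule that is the image of $\mu(A^{+},m)$ inside $\big(t^{-m}\Rpp\big)^{k}$ --- so that
\begin{displaymath}
  \coker\mu(A^{+},n)=\big(t^{-n}\Rpp\big)^{k}\big/A^{+}\Rpp^{k}
  \qquad\text{and}\qquad
  \coker\mu(A^{+},m)=\big(t^{-m}\Rpp\big)^{k}\big/A^{+}\Rpp^{k}\ .
\end{displaymath}

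Next I would record the decomposition of \emph{$\RR_{0}$-modules}
\begin{displaymath}
  \big(t^{-n}\Rpp\big)^{k}=\big(t^{-m}\Rpp\big)^{k}\oplus C\ ,
  \qquad C:=\Big(\bigoplus_{j=-n}^{-m-1}\RR_{j}\Big)^{k}\ ,
\end{displaymath}
which is immediate from $t^{-p}\Rpp=\bigoplus_{j\geq-p}\RR_{j}$; here $C$ is spanned by the homogeneous components of degrees $-n,\dots,-m-1$ and is the direct sum displayed in the statement, taken with the evident $k$\nbd-fold multiplicity ($k$ being the size of $A^{+}$). Since $A^{+}\Rpp^{k}$ is contained in the summand $\big(t^{-m}\Rpp\big)^{k}$, the third isomorphism theorem produces a short exact sequence of $\RR_{0}$-modules
\begin{displaymath}
  0\longrightarrow\coker\mu(A^{+},m)\longrightarrow\coker\mu(A^{+},n)\longrightarrow C\longrightarrow 0\ ,
\end{displaymath}
and this sequence is $\RR_{0}$-split: the obvious $\RR_{0}$-linear section $C\hookrightarrow\big(t^{-n}\Rpp\big)^{k}$ of the projection onto $C$ descends, modulo $A^{+}\Rpp^{k}$, to a section of the last map. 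Splitting the sequence yields $\coker\mu(A^{+},n)\iso\coker\mu(A^{+},m)\oplus C$, which is the assertion. (Alternatively, and perhaps more transparently, one can argue directly from the elementary fact that $Z\subseteq Y\subseteq Y\oplus C$ implies $(Y\oplus C)/Z\iso(Y/Z)\oplus C$, with $Z=A^{+}\Rpp^{k}$ and $Y=\big(t^{-m}\Rpp\big)^{k}$.)

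I do not expect a genuine obstacle here; the statement is essentially a bookkeeping observation. The two points deserving care are: (i) the claim that the image of multiplication by $A^{+}$ is independent of which truncated codomain one records it into --- this is exactly what the constraints on $m$ and $n$ provide; and (ii) the fact that the isomorphism is asserted only at the level of $\RR_{0}$-modules, since the complement $C$ is \emph{not} an $\Rpp$-submodule of $\big(t^{-n}\Rpp\big)^{k}$ (positive-degree elements of $\Rpp$ carry its homogeneous pieces up into $\big(t^{-m}\Rpp\big)^{k}$), so there is no analogous $\Rpp$-module splitting in general.
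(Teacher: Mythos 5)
Your proposal is correct and takes essentially the same approach as the paper: both exploit the $\RR_{0}$\nbd-linear splitting $(t^{-n}\Rpp)^{k} = (t^{-m}\Rpp)^{k} \oplus C$ together with the observation that the image of multiplication by $A^{+}$ lands in the first summand, the paper packaging this as a direct sum of exact sequences where you invoke the third isomorphism theorem. One small plus for your write-up: you correctly record the complement as $C = \bigl(\bigoplus_{j=-n}^{-m-1}\RR_{j}\bigr)^{k}$ with the $k$\nbd-fold multiplicity, whereas the paper's statement and proof write $\bigoplus_{k=-n}^{-m-1}\RR_{k}$, overloading the symbol~$k$ and silently dropping the power~$k$; this is harmless for Corollary~\ref{cor:m-n-no-matter}, where only finite generation and projectivity of the complement matter, but your version is the precise one.
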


\begin{proof}
  The direct sum of the exact sequence of $\RR_{0}$\nbd-modules
  \begin{displaymath}
    \Rpp^{k} \rTo[l>=4em]^{\mu (A^{+}, m)} %
    \big( t^{-m} \Rpp \big)^{k} \rTo %
    \coker \mu (A^{+}, m) \rTo 0
  \end{displaymath}
  with the exact sequence
  \begin{displaymath}
    0 \rTo \bigoplus_{k=-n}^{-m-1} \RR_{k} \rTo[l>=3em]^{=}
    \bigoplus_{k=-n}^{-m-1} \RR_{k}  \rTo 0
  \end{displaymath}
  yields a new exact sequence, which is precisely the sequence
  \begin{displaymath}
    \Rpp^{k} \rTo[l>=4em]^{\mu (A^{+}, n)} %
    \big( t^{-n} \Rpp \big)^{k} \rTo %
    \coker \mu (A^{+}, m) \oplus \bigoplus_{k=-n}^{-m-1} \RR_{k} \rTo
    0 \ .
  \end{displaymath}
  Hence
  $\coker \mu (A^{+}, n) \iso \coker \mu (A^{+}, m) \oplus
  \bigoplus_{k=-n}^{-m-1} \RR_{k}$ as $\RR_{0}$\nbd-modules.
\end{proof}

\begin{corollary}
  \label{cor:m-n-no-matter}
  Suppose that $\RR = \R$ is \emph{strongly} $\bZ$\nbd-graded. In the
  situation of Lemma~\ref{lem:m-n-no-matter}, the module
  $\coker \mu(A^{+}, n)$ is a finitely generated projective
  $\RR_{0}$\nbd-module if and only if $\coker \mu(A^{+}, m)$ is.
\end{corollary}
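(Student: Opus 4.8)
The plan is to deduce the corollary directly from Lemma~\ref{lem:m-n-no-matter}, which exhibits an isomorphism of $\RR_{0}$\nbd-modules
\begin{displaymath}
  \coker \mu(A^{+}, n) \iso \coker \mu(A^{+}, m) \oplus
  \bigoplus_{k=-n}^{-m-1} \RR_{k} \ .
\end{displaymath}
The only extra input needed is control over the ``error term'' $\bigoplus_{k=-n}^{-m-1} \RR_{k}$. First I would observe that, since $\RR = \R$ is strongly $\bZ$\nbd-graded, Corollary~\ref{cor:strong-means-fgp}~(1) tells us that each homogeneous component $\RR_{k}$ is a finitely generated projective $\RR_{0}$\nbd-module; hence the finite direct sum $\bigoplus_{k=-n}^{-m-1} \RR_{k}$ is a finitely generated projective $\RR_{0}$\nbd-module as well.

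Next I would invoke the standard fact that in the category of $\RR_{0}$\nbd-modules, a direct summand of a finitely generated projective module is again finitely generated projective, and conversely a direct sum of two modules is finitely generated projective if and only if each summand is. Applying this to the displayed isomorphism: if $\coker \mu(A^{+}, m)$ is finitely generated projective, then so is the right-hand side (being a direct sum of two such modules), hence so is $\coker \mu(A^{+}, n)$; conversely, if $\coker \mu(A^{+}, n)$ is finitely generated projective, then the right-hand side is, and therefore its direct summand $\coker \mu(A^{+}, m)$ is finitely generated projective too. This is the entire argument.

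There is no real obstacle here; the corollary is a formal consequence of the lemma plus the finiteness statement from Corollary~\ref{cor:strong-means-fgp}. The only point that requires a moment's care is making sure the ``error term'' is genuinely finitely generated projective rather than merely finitely generated, and this is exactly where the \emph{strong} grading hypothesis enters (it is not used in Lemma~\ref{lem:m-n-no-matter} itself, only in the corollary) --- without it one could only say $\RR_{k}$ is an $\RR_{0}$\nbd-module with no finiteness control. I would phrase the proof in two or three sentences, citing Corollary~\ref{cor:strong-means-fgp}~(1) for projectivity of the $\RR_{k}$ and Lemma~\ref{lem:m-n-no-matter} for the splitting, and then reading off the equivalence from the fact that the class of finitely generated projective modules is closed under passing to direct summands and under finite direct sums.
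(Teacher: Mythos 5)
Your argument is exactly the paper's: the proof given there cites precisely Lemma~\ref{lem:m-n-no-matter} and Corollary~\ref{cor:strong-means-fgp}~(1), and your expansion (strong grading makes the error term $\bigoplus_{k=-n}^{-m-1}\RR_{k}$ finitely generated projective, and the class of finitely generated projective $\RR_{0}$\nbd-modules is closed under finite direct sums and direct summands) correctly fills in the details the paper leaves implicit.
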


\begin{proof}
  This is a consequence of Corollary~\ref{cor:strong-means-fgp}~(1)
  and Lemma~\ref{lem:m-n-no-matter}.
\end{proof}

\begin{proposition}
  \label{prop:Fredholm_matrices+}
  Suppose that $\RR = \R$ is a strongly $\bZ$\nbd-graded ring. Let
  $A^{+}$ be a $k \times k$-matrix with entries in~$\R$, and let
  $m \in \bZ$ be ``suitable'' in the sense that multiplication
  by~$A^{+}$ yields a map of finitely generated projective
  $\Rpp$\nbd-modules
  $A^{+} = \mu(A^{+}, m) \colon \Rpp^{k} \rTo \Rp{-m}^{k}$,
  $x \mapsto A^{+} \cdot x$ (see discussion above) which we may
  consider as a chain complex
  % $\Rpp^{k} \rTo^{A^{+}} \Rp{-m}^{k}$ 
  concentrated in chain degrees~1 and~$0$. The following statements
  are equivalent:\goodbreak
  \begin{enumerate}
  \item The chain complex $A^{+}$ is \fd.
  \item The induced chain complex $A^{+} \tensor_{\Rpp} \Novm$ is
    contractible.
  \item The map~$A^{+}$ is invertible over~$\Novm$, that is, the
    map
    \begin{displaymath}
      \Novm^{k} \rTo \Novm^{k} \ , \quad x \mapsto A^{+} \cdot x
    \end{displaymath}
    is an isomorphism.
  \item The matrix~$A^{+}$ is invertible in the ring of all square
    matrices of size~$k$ with entries in~$\Novm$.
  \item The map $\mu(A^{+},m)$ is injective, and $\coker \mu(A^{+},m)$
    is a finitely generated projective $R_{0}$-module.
  \end{enumerate}
  Moreover, the validity of these statements does not depend on the
  specific choice of a suitable $m \in \bZ$.
\end{proposition}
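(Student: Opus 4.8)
The plan is to prove the cycle $(5)\Rightarrow(1)\Leftrightarrow(2)\Leftrightarrow(3)\Leftrightarrow(4)\Rightarrow(5)$, with all the difficulty concentrated in the implication $(3)\Rightarrow(5)$. To begin, observe that $A^{+}$ is a bounded complex of finitely generated projective $\Rpp$\nbd-modules, since $\Rpp^{k}$ is free and $\Rp{-m}^{k}$ is finitely generated projective by Corollary~\ref{cor:strong-means-fgp}~(4); hence $(1)\Leftrightarrow(2)$ is precisely Theorem~\ref{thm:R0-fd-R+} applied to the complex $C^{+}=A^{+}$. The equivalence $(3)\Leftrightarrow(4)$ is the tautology that an endomorphism of the finitely generated free module $\Novm^{k}$ is bijective exactly when the representing matrix lies in $M_{k}(\Novm)^{\times}$. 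For $(2)\Leftrightarrow(3)$ I would first establish, exactly as in Lemma~\ref{lem:mult_iso}, an isomorphism $\Rp{-m}\tensor_{\Rpp}\Novm\iso\Novm$, $x\tensor z\mapsto xz$, with inverse $z\mapsto\sum_{j}\alpha^{(-m)}_{j}\tensor\beta^{(m)}_{j}z$ for a partition of unity of type $(-m,m)$ (using that multiplying by a homogeneous element of degree~$m$ preserves~$\Novm$); under this identification $A^{+}\tensor_{\Rpp}\Novm$ becomes the two-term complex $\Novm^{k}\to\Novm^{k}$, $x\mapsto A^{+}x$, concentrated in degrees $1$ and $0$, which is contractible if and only if its differential is an isomorphism. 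Finally $(5)\Rightarrow(1)$ is easy: condition $(5)$ provides a short exact sequence $0\to\Rpp^{k}\to\Rp{-m}^{k}\to Q\to0$ of $\Rpp$\nbd-modules with $Q=\coker\mu(A^{+},m)$ finitely generated projective over~$\RR_{0}$, which splits over~$\RR_{0}$ because $Q$ is $\RR_{0}$\nbd-projective, so that $A^{+}$ is homotopy equivalent over~$\RR_{0}$ to the single module~$Q$ and in particular $\RR_{0}$\nbd-finitely dominated.

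It remains to prove $(3)\Rightarrow(5)$. Injectivity of $\mu(A^{+},m)$ is immediate, as it is the restriction of the isomorphism $A^{+}\colon\Novm^{k}\to\Novm^{k}$ along the compatible inclusions $\Rpp^{k}\subseteq\Novm^{k}\supseteq\Rp{-m}^{k}$. For the cokernel $Q=\coker\mu(A^{+},m)$ the key point is that, as an $\RR_{0}$\nbd-bimodule, $\Novm$ splits as $\Rpp\oplus\prod_{j<0}\RR_{j}$, and equally as $\Rp{-m}\oplus\prod_{j<-m}\RR_{j}$ (split off the homogeneous components of nonnegative degree, resp.\ of degree~$\geq-m$); hence the inclusions $\Rpp^{k}\hookrightarrow\Novm^{k}$ and $\Rp{-m}^{k}\hookrightarrow\Novm^{k}$ are \emph{split} monomorphisms of $\RR_{0}$\nbd-bimodules. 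Consequently they remain injective after applying $-\tensor_{\RR_{0}}M$ for any $\RR_{0}$\nbd-module~$M$, and since composing $\mu(A^{+},m)\tensor_{\RR_{0}}M$ with the inclusion $\Rp{-m}^{k}\tensor_{\RR_{0}}M\hookrightarrow\Novm^{k}\tensor_{\RR_{0}}M$ yields the restriction of the isomorphism $A^{+}\tensor\id_{M}$ of $\Novm^{k}\tensor_{\RR_{0}}M$, the map $\mu(A^{+},m)\tensor_{\RR_{0}}M$ is injective for all~$M$. The long exact $\mathrm{Tor}$ sequence of $0\to\Rpp^{k}\to\Rp{-m}^{k}\to Q\to0$ therefore gives $\mathrm{Tor}^{\RR_{0}}_{1}(Q,M)=0$ for every~$M$, so $Q$ is flat over~$\RR_{0}$.

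To finish $(3)\Rightarrow(5)$, recall that $(3)\Rightarrow(1)$ (already established) makes $A^{+}$ $\RR_{0}$\nbd-finitely dominated; being a bounded complex of projective $\RR_{0}$\nbd-modules, $A^{+}$ is then homotopy equivalent to a bounded complex of finitely generated projective $\RR_{0}$\nbd-modules, and — as $\mu(A^{+},m)$ is injective — $A^{+}$ is quasi-isomorphic to~$Q$, so $Q$ is a perfect $\RR_{0}$\nbd-complex and in particular finitely presented as an $\RR_{0}$\nbd-module. A finitely presented flat module being projective, $Q$ is finitely generated projective over~$\RR_{0}$, as required. For the closing assertion, conditions $(2)$, $(3)$, $(4)$ refer (under the identification above) only to the self-map $x\mapsto A^{+}x$ of $\Novm^{k}$ and so do not involve the choice of~$m$; since $(1)$ and $(5)$ are equivalent to them, neither do they — alternatively, for $(5)$ one may apply Lemma~\ref{lem:m-n-no-matter} and Corollary~\ref{cor:m-n-no-matter} directly. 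I expect the main obstacle to be the flatness step in $(3)\Rightarrow(5)$: this is the only place where strong gradedness enters beyond what already feeds into Theorem~\ref{thm:R0-fd-R+}, and one must take care that the relevant summand decompositions of $\Novm$ are of $\RR_{0}$\nbd-\emph{bi}modules, so that tensoring on either side preserves the splittings.
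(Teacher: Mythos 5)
Your proof is correct, and for the easy implications $(1)\Leftrightarrow(2)$, $(3)\Leftrightarrow(4)$, $(2)\Leftrightarrow(3)$, $(5)\Rightarrow(1)$ it matches the paper's argument essentially step for step. The genuine divergence is in $(3)\Rightarrow(5)$, and specifically in how $\coker\mu(A^{+},m)$ is shown to be finitely generated projective. The paper's route is explicit and self-contained: it sandwiches $\mu(A^{+},m)$ in a commutative ladder of two three-term diagrams $X\to Y\leftarrow Z$ (with $X$ a truncated power-series module $t^{q}\Psm$, $Y=\Novm$, $Z$ the polynomial piece), applies the elementary four-term exact sequence for such diagrams to get exact rows with kernel terms $0$ and $P=\bigoplus_{-m}^{q}\RR_{j}$, and reads off from the snake lemma that $P\iso\coker(\text{middle map})$, of which $\coker\mu(A^{+},m)$ is visibly a direct summand. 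You instead argue conceptually: the $\RR_{0}$\nbd-bimodule splitting of $\Novm$ by degree makes $\mu(A^{+},m)\tensor_{\RR_{0}}M$ injective for every left module $M$, so $\coker\mu(A^{+},m)$ is flat by the $\mathrm{Tor}$ long exact sequence; then $(3)\Rightarrow(1)$ (already in hand) together with \textsc{Ranicki}'s characterisation makes $\coker\mu(A^{+},m)$ a perfect $\RR_{0}$\nbd-complex concentrated in degree $0$, hence finitely presented, and finitely presented flat implies projective.

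Both arguments are sound, so this is a matter of trade-offs rather than a gap. The paper's snake-lemma proof does not invoke $(1)$ or $(2)$ at all, so $(3)\Rightarrow(5)$ is entirely self-contained, and it produces the cokernel as a named direct summand of an explicit invertible bimodule sum $\bigoplus_{-m}^{q}\RR_{j}$ -- a concreteness that is reused elsewhere in the paper's toolkit. Your flatness route is shorter to state and more conceptual, but it is not self-contained (it needs $(3)\Rightarrow(1)$, hence Theorem~\ref{thm:R0-fd-R+}), and it leans on the standard but nontrivial fact that a perfect complex with homology concentrated in degree $0$ has finitely presented $H_{0}$ (one must truncate the representing complex of f.g.\ projectives below degree $0$ by splitting off acyclic pieces -- worth saying explicitly), as well as on the finitely-presented-flat-implies-projective lemma. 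If you want the argument to stand on its own, spelling out those two facts would tighten it; as it is, the structure is correct.
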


\begin{definition}
  \label{def:Omega_+}
  A square matrix matrix with entries in~$\R$ satisfying one (and
  hence all) of the conditions listed in
  Proposition~\ref{prop:Fredholm_matrices+} is called an
  $\Rpp$\nbd-\textsc{Fredholm} \textit{matrix}. The set of all
  $\Rpp$\nbd-\textsc{Fredholm} matrices (of arbitrary finite size) is
  denoted by the symbol~$\Omega_{+}$.
\end{definition}

\begin{proof}[Proof of Proposition~\ref{prop:Fredholm_matrices+}]
  Condition~(5) is insensitive to the precise value of the
  suitable integer~$m$, in view of Corollary~\ref{cor:m-n-no-matter}.

  The equivalence of conditions~(1) and~(2) is
  Theorem~\ref{thm:R0-fd-R+} above. Statements~(3) and~(4) are
  trivially equivalent.

  By Lemma~\ref{lem:mult_iso}, the multiplication map
  \begin{displaymath}
    t^{-m} \Rpp \tensor_{\Rpp} \R \rTo \R \ , \quad x \tensor y
    \mapsto xy
  \end{displaymath}
  is an isomorphism of $\R$-modules. It follows that there is a chain
  of isomorphisms
  \begin{multline*}
    t^{-m} \Rpp \tensor_{\Rpp} \Novm \iso t^{-m} \Rpp \tensor_{\Rpp}
    \R \tensor_{\R} \Novm \\ %
    \iso \R \tensor_{\R} \Novm \iso \Novm
  \end{multline*}
  with composition the multiplication map. In view of this,
  statements~(2) and~(3) are equivalent.

  If (5) holds then the chain complex~$A^{+}$ is $R_{0}$\nbd-homotopy
  equivalent to the module $\coker \mu(A^{+},m)$, considered as a
  chain complex concentrated in degree~$0$, which shows that~(1) is
  satisfied in this case.

  Suppose finally that (3) holds; we will show that (5) is valid as
  well. We infer from the commutative square
  \begin{diagram}[small]
    \Rpp^{k} & \rTo[l>=5em]^{\mu(A^{+},m)} & \Rp{-m}^{k} \\
    \dTo<{\subset} && \dTo<{\subset} \\
    \Novm^{k} & \rTo[l>=3em]^{A^{+}}_{\iso} & \Novm^{k}
  \end{diagram}
  that the map~$\mu (A^{+},m)$ must be injective. Thus it remains to
  verify that $\coker \mu(A^{+},m)$ is a finitely generated projective
  $\RR_{0}$-module. Assuming $m \geq 1$, as we may in view of
  Corollary~\ref{cor:m-n-no-matter}, we can embed~$\mu(A^{+},m)$ into
  a commutative diagram of $\RR_{0}$\nbd-modules
  \begin{diagram}[righteqno,LaTeXeqno,small]
    \label{diag:large}
    \ignore( t\inv \RR \powers{t\inv} \ignore)^{k} & \rTo^{\subset} &
    \ignore( \RR \nov{t\inv} \ignore)^{k} & \lTo^{\supset} & \ignore( \Rpp \ignore)^{k} \\ %
    \dTo<{A^{+}} && \dTo<{A^{+}} &&
    \dTo<{A^{+}}>{\mu(A^{+},m)} \\ %
    \ignore( t^{q} \RR \powers{t\inv}\ignore)^{k} &
    \rTo[l>=3em]^{\subset} & \ignore( \RR \nov{t\inv} \ignore)^{k} &
    \lTo[l>=3em]^{\supset} & \ignore( t^{-m} \Rpp \ignore)^{k}
  \end{diagram}
  where $q \geq 0$ is sufficiently large; it is sufficient that $q$
  exceeds the maximal degree of any non-zero homogeneous component of
  the entries of~$A^{+}$.  Now in any \textsc{abel}ian category, a
  diagram $\mathcal{D} = \big( X \rTo^{\xi} Y \lTo^{\zeta} Z \big)$
  gives rise to an exact sequence, natural in~$\mathcal{D}$, of the
  form
  \begin{displaymath}
    0 \rTo \ker(\xi-\zeta) \rTo X \oplus Z \rTo[l>=3em]^{\xi - \zeta}
    Y  \rTo \coker(\xi-\zeta) \rTo 0 \ .
  \end{displaymath}
  We apply this to the rows of diagram~\eqref{diag:large} above,
  noting the the $\coker$ term is trivial in both cases (since
  $q,m \geq 0$). The kernel, on the other hand, is trivial in case of
  the top row, and is the finitely generated projective
  $\RR_{0}$\nbd-module $P = \bigoplus_{-m}^{q} \RR_{j}$ for the bottom
  row. We arrive at the following commutative diagram with exact rows:
  \begin{diagram}[small]
    && 0 & \rTo & t\inv \RR\nov{t\inv}^{k} \oplus \Rpp^{k} & \rTo & \RR
    \nov{t\inv} & \rTo & 0 \\
    && \dTo && \dTo<{A^{+} \oplus A^{+}} && \dTo<{A^{+}} \\
    0 & \rTo & P & \rTo & t^{q} \RR\nov{t\inv}^{k} \oplus
    t^{-m}\Rpp^{k} & \rTo & \RR \nov{t\inv} & \rTo & 0
  \end{diagram}
  As the right-hand vertical map is an isomorphism by hypothesis~(3),
  the \textsc{Snake} lemma yields an isomorphism of~$P$ with the
  cokernel of the middle vertical map, which contains
  $\coker \mu(A^{+},m) \colon \Rpp^{k} \rTo t^{-m} \Rpp^{k}$ as a
  direct summand. This shows that $\coker \mu(A^{+},m)$ is a finitely
  generated projective $\RR_{0}$\nbd-module as desired.
\end{proof}

\section{The \textsc{Fredholm} localisations $\Omega_{+}\inv \Rpp$ and~$\Omega_{+}\inv \R$}

We now turn our attention to the non-commutative localisations
\begin{displaymath}
  \alpha \colon \Rpp \rTo \Omega_{+}\inv \Rpp %
  \quad \text{and} \quad %
  \gamma \colon \R \rTo \Omega_{+}\inv \R
\end{displaymath}
where $\Omega_{+}$ denotes the set of $\Rpp$\nbd-\textsc{Fredholm}
matrices as in Definition~\ref{def:Omega_+}. %  these localisations
% control $R_{0}$\nbd-finite domination of $\Rpp$-module chain
% complexes. 
To be precise, we define
$\alpha = \lambda_{\Omega_{+}} \colon \Rpp \rTo \Omega_{+}\inv \Rpp$
as the the non-commutative localisation inverting all the maps
\begin{equation}
  \label{eq:invert_maps_polynomial}
  \mu (A^{+},m) \colon \Rpp^{k} \rTo \Rp {-m}^{k}
\end{equation}
of finitely generated projective $\Rpp$\nbd-modules, where $k \geq 1$
is arbitrary, $A^{+} \in \Omega_{+}$ has size~$k$, and $m \in \bZ$ is
suitable in the sense of \S\ref{sec:rpp-Fredhold-matrices}. As
$A^{+}$ satisfies property~(4) of
Proposition~\ref{prop:Fredholm_matrices+}, the universal property of
non-commutative localisation yields a factorisation
\begin{equation}
  \label{eq:factorisation}
  \Rpp \rTo^{\alpha} \Omega_{+}\inv \Rpp \rTo \RR \nov{t\inv}
\end{equation}
of the inclusion map; in particular, $\alpha$~is injective. ---
Similarly, we define
$\gamma = \lambda_{\Omega_{+}} \colon \R \rTo \Omega_{+}\inv \R$ as
the the non-commutative localisation inverting all the maps
\begin{equation}
  \label{eq:invert_maps_Laurent}
  A^{+} \colon \R^{k} \rTo \R^{k}
\end{equation}
of finitely generated free $\R$\nbd-modules, where $k \geq 1$ is
arbitrary and $A^{+} \in \Omega_{+}$ has size~$k$. As $A^{+}$
satisfies property~(4) of Proposition~\ref{prop:Fredholm_matrices+},
the universal property of non-commutative localisation yields a
factorisation
\begin{equation}
  \label{eq:factorisation}
  \R \rTo^{\gamma} \Omega_{+}\inv \R \rTo \RR \nov{t\inv}
\end{equation}
of the inclusion map; in particular, $\gamma$~is injective.

\medbreak

Applying the functor $\,\mbox{-}\, \tensor_{\Rpp} \R$ to a map as
in~\eqref{eq:invert_maps_polynomial} yields a map as
in~\eqref{eq:invert_maps_Laurent}, by Lemma~\ref{lem:mult_iso}. Thus
$\gamma|_{\Rpp} \colon \Rpp \rTo \Omega_{+}\inv \R$ inverts all the
maps~\eqref{eq:invert_maps_polynomial} and factorises through a ring
homomorphism
$\delta \colon \Omega_{+}\inv \Rpp \rTo \Omega_{+}\inv \R$. That is,
the maps $\alpha$ and~$\gamma$ fit into the commutative square diagram
of Fig.~\ref{fig:po_Fredholm} which, by
Proposition~\ref{prop:pushout_by_loc}, is a pushout square in the
category of unital rings.

\begin{figure}[ht]
  \centering
\begin{diagram}[LaTeXeqno,small]
    \label{eq:factorise_Rpp_Novm}
    \Rpp & \rTo[l>=2em]^{\alpha} & \Omega_{+}\inv \Rpp \\
    \dTo<{\subset}>{\beta} & \po &
    \dTo<{\delta} \\
    \R & \rTo^{\gamma} & \Omega_{+}\inv \R
  \end{diagram}
  \caption{Pushout square of \textsc{Fredholm} localisations}
  \label{fig:po_Fredholm}
\end{figure}

\begin{theorem}
  \label{thm:non-com-fd+}
  Let $\RR = \R$ be a strongly $\bZ$\nbd-graded ring, and let $C^{+}$
  be a bounded chain complex of finitely generated free
  $\Rpp$-modules. The following statements are equivalent:
  \begin{enumerate}
  \item The chain complex~$C^{+}$ is \fd.
  \item The induced chain complex
    $C^{+} \tensor_{\Rpp} \Omega_{+}\inv \Rpp$ is contractible.
  \item The induced chain complex
    $C^{+} \tensor_{\Rpp} \Omega_{+}\inv \R$ is contractible.
  \end{enumerate}
\end{theorem}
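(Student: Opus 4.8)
The plan is to prove the cycle of implications $(1)\Rightarrow(2)\Rightarrow(3)\Rightarrow(1)$; two of these are purely formal. For $(2)\Rightarrow(3)$ I would use the pushout square of Fig.~\ref{fig:po_Fredholm}: since $\gamma$ restricts to $\delta\circ\alpha$ on~$\Rpp$, associativity of the tensor product gives a natural isomorphism $C^{+}\tensor_{\Rpp}\Omega_{+}\inv\R\iso(C^{+}\tensor_{\Rpp}\Omega_{+}\inv\Rpp)\tensor_{\Omega_{+}\inv\Rpp}\Omega_{+}\inv\R$, and tensoring preserves contractibility. For $(3)\Rightarrow(1)$ I would compose $\gamma$ with the factorisation $\Omega_{+}\inv\R\rTo\Novm$ of the inclusion $\R\subset\Novm$; this identifies $C^{+}\tensor_{\Rpp}\Novm$ with $(C^{+}\tensor_{\Rpp}\Omega_{+}\inv\R)\tensor_{\Omega_{+}\inv\R}\Novm$, which is contractible, so $C^{+}$ is $\RR_{0}$\nbd-finitely dominated by Theorem~\ref{thm:R0-fd-R+}.

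The substance is $(1)\Rightarrow(2)$. By Theorem~\ref{thm:R0-fd-R+} the hypothesis is equivalent to contractibility of $C^{+}\tensor_{\Rpp}\Novm$; the strategy is to build an $\alpha$\nbd-proto-contraction of $C^{+}$ in the sense of Section~4 and then conclude with Lemma~\ref{lem:proto}. Equip each $C^{+}_{k}$ with a basis, so that $C^{+}_{k}=\Rpp^{r_{k}}$ and the differential is a matrix $D_{k}$ over~$\Rpp$. Replacing $C^{+}$ by a homotopy equivalent bounded complex of finitely generated free $\Rpp$\nbd-modules (which alters none of (1)--(3), all being homotopy invariant), I would first arrange that every homogeneous component of every $D_{k}$ has $t$\nbd-degree $0$ or~$1$. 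A contracting homotopy of $C^{+}\tensor_{\Rpp}\Novm$ is a family of matrices $H_{k}$ over~$\Novm$ with $D_{k+1}H_{k}+H_{k-1}D_{k}=I$; write $H_{k}=S_{k}+\check H_{k}$ where $S_{k}$ collects the homogeneous components of non-negative degree (so $S_{k}$ has entries in~$\Rpp$) and $\check H_{k}$ those of strictly negative degree. The degree restriction on the differentials forces the discrepancy
\[
  I-\bigl(D_{k+1}S_{k}+S_{k-1}D_{k}\bigr)\;=\;D_{k+1}\check H_{k}+\check H_{k-1}D_{k}\;=:\;E_{k}
\]
to consist of homogeneous components of degree~$0$ only, i.e.\ to be a matrix over~$\RR_{0}$. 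The key step is then to verify that each endomorphism $D_{k+1}S_{k}+S_{k-1}D_{k}=I-E_{k}$ of~$\Rpp^{r_{k}}$ is an $\Rpp$\nbd-\textsc{Fredholm} matrix, that is, lies in~$\Omega_{+}$: using that $I-E_{k}$ is cut out of a genuine contraction over~$\Novm$, one shows it is injective with $\coker(I-E_{k})$ a finitely generated projective $\RR_{0}$\nbd-module, by a \textsc{Snake}\nbd-lemma computation modelled on the argument $(3)\Rightarrow(5)$ in the proof of Proposition~\ref{prop:Fredholm_matrices+}. Granting this, $I-E_{k}$ is inverted by $\lambda_{\Omega_{+}}$: for an $\Rpp$\nbd-\textsc{Fredholm} matrix with entries already in~$\Rpp$ the integer $m=0$ is ``suitable'' in the sense of Section~\ref{sec:rpp-Fredhold-matrices}, so the self-map $\mu(I-E_{k},0)$ of $\Rpp^{r_{k}}$ --- which is just multiplication by $I-E_{k}$ --- is among the maps that $\alpha$ inverts. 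Hence $D_{k+1}S_{k}+S_{k-1}D_{k}$ becomes an automorphism of $C^{+}_{k}\tensor_{\Rpp}\Omega_{+}\inv\Rpp$, the family $(S_{k})$ is an $\alpha$\nbd-proto-contraction of~$C^{+}$, and Lemma~\ref{lem:proto} yields contractibility of $C^{+}\tensor_{\Rpp}\Omega_{+}\inv\Rpp$.

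The step I expect to be the main obstacle is precisely the verification that the truncated combined differentials $I-E_{k}$ are genuinely $\Rpp$\nbd-\textsc{Fredholm}: truncation of a contracting homotopy is not compatible with composition, so one must track $t$\nbd-degrees carefully --- this is what forces the preliminary normalisation to differentials of degree at most one --- and then extract from the ambient $\Novm$\nbd-contraction enough information to pin down $\coker(I-E_{k})$. A variant route, trading this bookkeeping for the homological algebra of mapping cones, is to invoke the \textsc{Mather} trick (Corollary~\ref{cor:C+-is-half-torus} together with Proposition~\ref{prop:Mather_trick}): one replaces $C^{+}$, up to $\Rpp$\nbd-homotopy, by $\cone(\psi)$ with $\psi\colon D\tensor_{\RR_{0}}\Rp 1\rTo D\tensor_{\RR_{0}}\Rp 0$ for a bounded complex $D$ of finitely generated projective $\RR_{0}$\nbd-modules (using Lemma~\ref{lem:mult_iso} to compare the relevant tensor products over~$\R$), and then argues that $\psi$, being a homotopy equivalence after $-\tensor_{\Rpp}\Novm$, is already one after $-\tensor_{\Rpp}\Omega_{+}\inv\Rpp$; with this approach the difficulty reappears as a chain-complex enhancement of the equivalence $(2)\Leftrightarrow(4)$ of Proposition~\ref{prop:Fredholm_matrices+}.
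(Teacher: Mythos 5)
Your handling of the formal implications is fine and matches the paper exactly: $(2)\Rightarrow(3)$ from the pushout square, $(3)\Rightarrow(1)$ by composing with $\Omega_{+}\inv\R\rTo\Novm$ and citing Theorem~\ref{thm:R0-fd-R+}.

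The proposed argument for $(1)\Rightarrow(2)$, however, does not work as stated, and the obstacle you flag is in fact fatal to the specific truncation you chose. You take $S_{k}=\tr_{0}(H_{k})$, the non-negative part of the contracting homotopy, so that $S_{k}$ has entries in $\Rpp$, and you then need $I-E_{k}=D_{k+1}S_{k}+S_{k-1}D_{k}$ to be $\Rpp$\nbd-\textsc{Fredholm}. There is no reason for this, and it can fail outright. Take $\RR=K[t,t\inv]$ with its usual grading, so $\Rpp=K[t]$, and let $C^{+}$ be the two-term complex $K[t]\rTo^{t-1}K[t]$. This is $\RR_{0}$\nbd-finitely dominated (it is quasi-isomorphic to $K$ in degree~$0$), the differential already has homogeneous components of degree $0$ and $1$ only, and the unique nonzero contracting homotopy over $\Novm=K\nov{t\inv}$ is $H_{0}=(t-1)\inv=t\inv+t^{-2}+\dotsb$, which has \emph{no} non-negative components at all. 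Thus $S_{0}=0$ and $I-E_{0}=D_{1}S_{0}+S_{-1}D_{0}=0$, which is certainly not invertible over $\Novm$; your proposed $\alpha$\nbd-proto-contraction is identically zero in degree~$0$. (Your computation that $E_{k}$ has entries in $\RR_{0}$ is correct — in the example $E_{0}=1$ — but nothing constrains $I-E_{k}$ to stay away from the non-\textsc{Fredholm} locus.) The \textsc{Snake}-lemma argument of Proposition~\ref{prop:Fredholm_matrices+}$(3)\Rightarrow(5)$ cannot rescue this: that argument takes invertibility over $\Novm$ as its \emph{hypothesis}, which is exactly what you would need to establish. The alternative \textsc{Mather}-trick route you sketch at the end is also circular for the reason you yourself note — proving $\psi$ becomes a homotopy equivalence after $\Omega_{+}\inv\Rpp$ is the same statement applied to $\cone(\psi)$.

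The paper's fix is to truncate at a sufficiently \emph{negative} level $m\ll 0$ (depending on the degrees appearing in the differentials but not on~$k$), setting $S^{+}_{k}=\tr_{m}(\sigma^{+}_{k})$. These matrices have entries in $\R$, not in $\Rpp$, so they do not give a proto-contraction of $C^{+}$ itself; instead one introduces the auxiliary complex $B^{+}$ with $B^{+}_{k}=\Rp{m}^{\,r_{k}}$ and the same differential matrices, and the inclusion $g\colon C^{+}\rTo B^{+}$. Because $m$ is negative enough, the combined term $E_{k}=D_{k+1}S^{+}_{k}+S^{+}_{k-1}D_{k}$ equals the identity plus a matrix whose homogeneous components all have strictly negative degree, and such a matrix is \emph{automatically} invertible over $\Novm$ (geometric series in $\Psm$), hence lies in $\Omega_{+}$ with no further work. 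The inclusion $\Rpp\hookrightarrow\Rp{m}$ is itself in $\Omega_{+}$, so $g$ becomes an isomorphism after $\alpha$, and the $S^{+}_{k}$ form an $\alpha$\nbd-proto-null homotopy of $g$; Lemma~\ref{lem:proto} then gives contractibility of $C^{+}\tensor_{\Rpp}\Omega_{+}\inv\Rpp$. The two structural changes you are missing are thus (i) truncating low enough that \textsc{Fredholm}ness of $E_{k}$ is free, at the price of leaving $\Rpp$, and (ii) compensating for that by passing to a proto-\emph{null}-homotopy against the auxiliary complex $B^{+}$ rather than a proto-contraction of $C^{+}$. The normalisation of the differential to degrees $\{0,1\}$ is not needed.
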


\begin{proof}
  (2) \impl~(3): Immediate from the factorisation
  \begin{displaymath}
    \Rpp \rTo^{\alpha} \Omega_{+}\inv \Rpp \rTo \Omega_{+}\inv \R
  \end{displaymath}
  of~$\gamma$, see~\eqref{eq:factorise_Rpp_Novm} in
  Fig.~\ref{fig:po_Fredholm}.

  (3) \impl~(1): Immediate from the
  factorisation~\eqref{eq:factorisation} and
  Theorem~\ref{thm:R0-fd-R+}.

  (1) \impl~(2): Suppose that $C^{+}$ is \fd. The differentials
  of~$C^{+}$ can be thought of as matrices $D^{+}_{n}$ with entries
  in~$\Rpp$, after equipping the chain modules of~$C^{+}$ with finite
  bases and thereby identifying $C^{+}_{n}$ with a finite direct sum
  $\bigoplus \Rpp = \big( \Rpp \big)^{r_{n}}$. The $n$th chain module
  of the induced complex $C^{+} \tensor_{\Rpp} \Novm$ is then
  identified with $\Novm^{r_{n}}$. The induced complex is contractible
  by Theorem~\ref{thm:R0-fd-R+}, so there are matrices
  $\sigma^{+}_{n}$ with entries in~$\Novm$ such that
  $D^{+}_{n+1} \cdot \sigma^{+}_{n} + \sigma^{+}_{n-1} \cdot
  D^{+}_{n}$ is a unit matrix of
  size~$r_{n}$. % finite size (\viz, the number of elements of the
  % chosen basis for~$C^{+}_{n}$). 
  We can truncate the entries of the
  matrices~$\sigma^{+}_{n}$ below at some suitable integer $m \ll 0$
  (not depending on~$n$) to obtain matrices
  $S^{+}_{n} = \tr_{m}(\sigma^{+}_{n})$ with entries in~$\R$ such that
  $E_{n} = D^{+}_{n+1} \cdot S^{+}_{n} + S^{+}_{n-1} \cdot D^{+}_{n}$
  is the sum of a unit matrix, and a matrix the non-zero entries of
  which have homogeneous components of strictly negative degree. Thus
  $E_{n}$~is invertible over~$\Novm$ so that $E_{n} \in \Omega_{+}$.

  We now define a new $\Rpp$\nbd-module chain complex~$B^{+}$ by
  setting $B^{+}_n = \big( \Rp m \big)^{r_{n}}$, with differentials
  given by the matrices~$D^{+}_{n}$. The inclusion map
  $\Rpp \rTo \Rp m$, itself an element of~$\Omega_{+}$, yields a chain
  map $g \colon C^{+} \rTo B^{+}$; the induced chain map
  \begin{displaymath}
    g \tensor \Omega_{+}\inv \Rpp \colon C^{+} \tensor_{\Rpp}
    \Omega_{+}\inv \Rpp \rTo B^{+} \tensor_{\Rpp} \Omega_{+}\inv \Rpp
  \end{displaymath}
  is an isomorphism. The matrices~$S^{+}_{n}$ define module
  homomorphisms $C^{+}_{n} \rTo B^{+}_{n+1}$ which constitute in fact
  an $\alpha$\nbd-proto-null homotopy since
  $E_{n} = D^{+}_{n+1} \cdot S^{+}_{n} + S^{+}_{n-1} \cdot D^{+}_{n}$
  is an element of~$\Omega_{+}$, as explained above. Here
  $\alpha \colon \Rpp \rTo \Omega_{+} \Rpp$ is the localisation map as
  in~\eqref{eq:factorise_Rpp_Novm}. It follows that
  $C^{+} \tensor_{\Rpp} \Omega_{+} \inv \Rpp$, the source of
  $g \tensor \Omega_{+}\inv \Rpp$, is contractible by
  Lemma~\ref{lem:proto}.
\end{proof}

\begin{theorem}[Universal property of~$\Omega_{+}\inv \Rpp$]
  \label{thm:universal-Rpp}
  Suppose that $\R$ is a strongly $\bZ$\nbd-graded ring. The
  localisation $\lambda \colon \Rpp \rTo \Omega_{+}\inv \Rpp$ is the
  universal $\Rpp$\nbd-ring making $\RR_{0}$\nbd-finitely dominated
  chain complexes contractible. That is, suppose that
  $f \colon \Rpp \rTo S$ is an $\Rpp$\nbd-ring such that for every
  bounded complex of finitely generated free
  $\Rpp$\nbd-modules~$C^{+}$ which is $\RR_{0}$\nbd-finitely
  dominated, the complex $C^{+} \tensor_{\Rpp} S$ is
  contractible. Then there is a factorisation
  $\Rpp \rTo^{\lambda} \Omega_{+}\inv \Rpp \rTo^{\eta} S$ of~$f$, with
  a uniquely determined ring homomorphism~$\eta$.
\end{theorem}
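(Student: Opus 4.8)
The plan is to mimic the strategy used for Theorem~\ref{thm:universal-tilde}: show that the hypothesis on $f$ forces $f$ to be $\Omega_{+}$\nbd-inverting, and then invoke the universal property of the non-commutative localisation $\lambda = \lambda_{\Omega_{+}} \colon \Rpp \rTo \Omega_{+}\inv \Rpp$ to obtain the factorisation $f = \eta \lambda$ together with uniqueness of~$\eta$. The crucial point is that Theorem~\ref{thm:non-com-fd+} already tells us that $\Omega_{+}\inv \Rpp$ itself has the stated property (an $\RR_{0}$\nbd-finitely dominated bounded complex of finitely generated free $\Rpp$\nbd-modules becomes contractible after $\,\cdot\, \tensor_{\Rpp} \Omega_{+}\inv \Rpp$), so the localisation genuinely lies in the class of rings under consideration and only its universality among them needs verification.

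Concretely, first I would fix an $\Rpp$\nbd-\textsc{Fredholm} matrix $A^{+} \in \Omega_{+}$ of size~$k$, choose a suitable $m \in \bZ$, and regard the associated map $\mu(A^{+},m) \colon \Rpp^{k} \rTo \Rp{-m}^{k}$ as a two-term chain complex $C^{+}$ concentrated in degrees~$1$ and~$0$. There is a subtlety here that is absent in Theorem~\ref{thm:universal-tilde}: the hypothesis on~$f$ is stated for complexes of finitely generated \emph{free} $\Rpp$\nbd-modules, whereas $\Rp{-m}$ is only finitely generated \emph{projective} (Corollary~\ref{cor:strong-means-fgp}~(3),(4)). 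To get around this I would instead use the map $A^{+} \colon \R^{k} \rTo \R^{k}$ of finitely generated free modules as in~\eqref{eq:invert_maps_Laurent}, or, staying over~$\Rpp$, pick a complementary projective $\Rpp$\nbd-module~$P'$ with $\Rp{-m}^{k} \oplus P' \iso (\Rpp)^{N}$ free and augment $C^{+}$ by a suitable acyclic complex so as to replace the target by a free module without changing the finite-domination status or the contractibility of any tensor product. Either way, one arrives at a bounded complex~$\bar C^{+}$ of finitely generated free $\Rpp$\nbd-modules which is $\Rpp$\nbd-homotopy equivalent (or at least quasi-isomorphic, which is enough here since everything is projective) to~$C^{+}$.

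Next I would verify that this $\bar C^{+}$ is $\RR_{0}$\nbd-finitely dominated, so that the hypothesis on~$f$ applies. By Proposition~\ref{prop:Fredholm_matrices+}, condition~(4) $\Leftrightarrow$ condition~(1): an $\Rpp$\nbd-\textsc{Fredholm} matrix gives a complex that is \fd{} by the very definition of~$\Omega_{+}$. Hence the hypothesis yields that $\bar C^{+} \tensor_{\Rpp} S$ is contractible, and therefore so is $C^{+} \tensor_{\Rpp} S$. But $C^{+} \tensor_{\Rpp} S$ is precisely the two-term complex $S^{k} \rTo^{\ f(A^{+})\ } (\Rp{-m} \tensor_{\Rpp} S)^{k}$ (after identifying $\Rp{-m} \tensor_{\Rpp} S$ appropriately, using that $\Rp{-m}$ is an invertible $\Rpp$\nbd-bimodule by Corollary~\ref{cor:strong-means-fgp}~(4)); its contractibility is equivalent to $f_{*}\big(\mu(A^{+},m)\big)$ being an isomorphism of $S$\nbd-modules. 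Since this holds for every $A^{+} \in \Omega_{+}$ and every suitable~$m$, the ring homomorphism~$f$ is $\Omega_{+}$\nbd-inverting in the sense used to define $\lambda_{\Omega_{+}}$.

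Finally, the universal property of the non-commutative localisation $\lambda \colon \Rpp \rTo \Omega_{+}\inv \Rpp$ provides a unique ring homomorphism $\eta \colon \Omega_{+}\inv \Rpp \rTo S$ with $\eta \lambda = f$, completing the proof. The main obstacle I anticipate is the bookkeeping in the first two steps: one must carefully arrange that the test complex built from an $\Rpp$\nbd-\textsc{Fredholm} matrix can be taken to consist of finitely generated \emph{free} modules (to match the hypothesis) while preserving both its $\RR_{0}$\nbd-finite domination and the exact meaning of ``$C^{+} \tensor_{\Rpp} S$ contractible'' as ``$f(A^{+})$ invertible''. Once that reduction is in place, the argument is a routine application of the universal property, entirely parallel to the proof of Theorem~\ref{thm:universal-tilde}.
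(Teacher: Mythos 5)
Your proof follows the same strategy as the paper's own: Theorem~\ref{thm:non-com-fd+} places $\Omega_{+}\inv \Rpp$ in the class of rings under consideration, and one then checks that $f$ is $\Omega_{+}$\nbd-inverting by applying the hypothesis on~$f$ to the two-term complex associated to each $\Rpp$\nbd-\textsc{Fredholm} matrix, concluding via the universal property of non-commutative localisation. You correctly identify a technical wrinkle that the paper's proof passes over in silence: the hypothesis quantifies over complexes of finitely generated \emph{free} $\Rpp$\nbd-modules, while the target $\Rp{-m}^{k}$ of the test complex $\mu(A^{+},m)$ is only finitely generated projective; your second workaround (stabilising by a split acyclic two-term complex of finitely generated projectives so that all chain modules become free, which changes neither $\RR_{0}$\nbd-finite domination nor the contractibility of the tensor product with~$S$) is the right fix. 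One caveat: your first suggested alternative, replacing the test complex by $A^{+}$ acting on $\R^{k}$, does not by itself repair the issue, since $\R$ is not a finitely generated $\Rpp$\nbd-module and so this does not produce an admissible test complex for the stated hypothesis; only the stabilisation-by-projectives route applies, and with it your argument is essentially the paper's, made careful.
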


\begin{proof}
  It was shown in Theorem~\ref{thm:non-com-fd+} above that
  $\Omega_{+}\inv \Rpp$ makes $\RR_{0}$\nbd-finitely dominated chain
  complexes contractible. Thus it is enough to show that $f$ inverts
  the maps $\mu(A^{+}, m)$ of~\eqref{eq:invert_maps_polynomial} for
  any $A^{+} \in \Omega_{+}$ and any suitable $m \in \bZ$. By
  definition of~$\Omega_{+}$ the \textit{complex}~$\mu(A^{+},m)$ is
  $\RR_{0}$\nbd-finitely dominated so that, by hypothesis on~$f$, the
  complex $\mu(A^{+},m) \tensor_{\Rpp} S$ is contractible. This says
  precisely that $f$ inverts the map $\mu(A^{+},m)$.
\end{proof}

One can also show that \textit{the localisation
  $\lambda \colon \R \rTo \Omega_{+}\inv \R$ is the universal
  $\R$\nbd-ring making $\RR_{0}$\nbd-finitely dominated, bounded chain
  complexes of finitely generated free $\Rpp$\nbd-modules complexes
  contractible}.

\medbreak

We finish with proving that
$\delta \colon \Omega_{+}\inv \Rpp \rTo \Omega_{+}\inv \R$ is an
isomorphism if $\R$ contains a homogeneous unit of non-zero degree.

\begin{proposition}
  \label{prop:hom-unit}
  Suppose that $\RR = \R$ is a strongly $\bZ$\nbd-graded ring. Suppose
  there exists a homogeneous unit of positive degree in~$\R$. Then
  there is an injective ring homomorphism
  $\iota \colon \R \rTo \Omega_{+}\inv \Rpp$ with
  $\iota \beta = \alpha$, and
  $\delta \colon \Omega_{+}\inv \Rpp \rTo \Omega_{+}\inv \R$ is an
  isomorphism satisfying $\delta \iota = \gamma$.
\end{proposition}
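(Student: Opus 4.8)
The plan is to construct the ring homomorphism $\iota\colon\R\rTo\Omega_{+}\inv\Rpp$ explicitly and then let the purely formal Lemma~\ref{lem:cat-thy} do the rest. Fix a homogeneous unit $u$ of degree $d>0$. Arguing component by component on $u\inv$ as in the proof of Lemma~\ref{lem:pi_n-is-iso}, one sees that $u\inv$ is homogeneous of degree $-d$, so conjugation by $u^{\pm1}$ sends each $\RR_{k}$ to itself and in particular restricts to automorphisms of the subring~$\Rpp$. The first step is to observe that the $1\times1$ matrix $(u)$ lies in~$\Omega_{+}$: indeed $u$ is invertible over~$\Novm$ with inverse $u\inv$, which is condition~(4) of Proposition~\ref{prop:Fredholm_matrices+}. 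Consequently the localisation map $\alpha=\lambda_{\Omega_{+}}$ inverts the endomorphism $\mu\big((u),0\big)\colon\Rpp\rTo\Rpp$, $x\mapsto ux$; after tensoring, this endomorphism becomes left multiplication by $\alpha(u)$ on~$\Omega_{+}\inv\Rpp$, and since left multiplication by a ring element is bijective exactly when that element is a unit, $\alpha(u)$ is a unit in~$\Omega_{+}\inv\Rpp$.

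Next I would define~$\iota$. Any $z\in\R$ has finitely many non-zero homogeneous components, so $u^{j}z\in\Rpp$ for all sufficiently large~$j$; set $\iota(z)=\alpha(u)^{-j}\,\alpha(u^{j}z)$ for any such~$j$. This is independent of the choice of~$j$, because $\alpha(u^{j'}z)=\alpha(u)^{\,j'-j}\,\alpha(u^{j}z)$ whenever $j'\ge j$ (here both $u^{j'-j}$ and $u^{j}z$ lie in~$\Rpp$), and it is manifestly additive with $\iota(1)=1$. Taking $j=0$ shows $\iota|_{\Rpp}=\alpha$, i.e.\ $\iota\beta=\alpha$. Multiplicativity is the one place that needs care: I would first prove the auxiliary identity $\iota(ay)=\alpha(a)\,\iota(y)$ for $a\in\Rpp$, $y\in\R$, using $\alpha(u^{j}au^{-j})=\alpha(u)^{j}\,\alpha(a)\,\alpha(u)^{-j}$ (valid since $u^{j}au^{-j}\in\Rpp$), then deduce $\iota(u\inv z)=\alpha(u)\inv\,\iota(z)$ and inductively $\iota(u^{-j}z)=\alpha(u)^{-j}\,\iota(z)$; writing an arbitrary $x\in\R$ as $x=u^{-j}a$ with $a=u^{j}x\in\Rpp$, these combine to $\iota(xy)=\alpha(u)^{-j}\,\alpha(a)\,\iota(y)=\iota(x)\iota(y)$. (Equivalently one may note that $\{u^{j}\}_{j\ge0}$ is an Ore set in~$\Rpp$ with $\Rpp\big[\{u^{j}\}\inv\big]\iso\R$ and obtain $\iota$ from the universal property of Ore localisation applied to~$\alpha$; the computations above are precisely what makes this identification work.)

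With $\iota$ available, I would invoke Lemma~\ref{lem:cat-thy} for the pushout square of Fig.~\ref{fig:po_Fredholm}, whose left edge $\beta$ is a ring epimorphism by Lemma~\ref{lem:incl_is_epi}: since $\iota\beta=\alpha$, the lemma yields $\delta\iota=\gamma$ and shows that $\delta\colon\Omega_{+}\inv\Rpp\rTo\Omega_{+}\inv\R$ is an isomorphism. Injectivity of~$\iota$ then follows for free: $\gamma$ is injective because it is a factor of the inclusion $\R\rTo\Novm$ recorded in~\eqref{eq:factorisation}, and $\iota=\delta\inv\gamma$ by the relation $\delta\iota=\gamma$.

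I expect the main obstacle to be the verification that the denominator-clearing formula for~$\iota$ really defines a ring homomorphism; within that, the genuinely load-bearing point — and the only place where the hypothesis on~$u$ is used in an essential way — is that conjugation by the homogeneous unit~$u$ preserves the subring~$\Rpp$, which is what allows powers of~$u$ to be moved past elements of~$\Rpp$ inside~$\Omega_{+}\inv\Rpp$.
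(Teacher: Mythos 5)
Your proposal is correct and follows essentially the same route as the paper: show $(u)\in\Omega_{+}$ so that $\alpha(u)$ is a unit, define $\iota$ by clearing denominators with powers of $u$, check it is a well-defined ring homomorphism, and then conclude via Lemmas~\ref{lem:incl_is_epi} and~\ref{lem:cat-thy} applied to the pushout square. Your organisation of the multiplicativity check (an auxiliary identity $\iota(ay)=\alpha(a)\iota(y)$ built on conjugation by $u$ preserving $\Rpp$) and your derivation of injectivity of $\iota$ from that of $\gamma$ via $\iota=\delta\inv\gamma$ are cosmetic variants of the paper's direct computations, resting on the same key point.
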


\begin{proof}
  Let $u \in \RR_{d} \cap \R^{\times}$, with $d>0$. Then the
  $1 \times 1$\nbd-matrix $(u)$ is an $\Rpp$\nbd-\textsc{Fred\-holm}
  matrix since the cokernel of the map
  \begin{displaymath}
    \Rpp \rTo \Rpp \ , \quad r \mapsto ur
  \end{displaymath}
  is the finitely generated projective $\RR_{0}$\nbd-module
  $\bigoplus_{0}^{d-1} \RR_{j}$. The induced map
  \begin{displaymath}
    \Omega_{+}\inv \Rpp \rTo \Omega_{+}\inv \Rpp
  \end{displaymath}
  is given by multiplication with $\alpha(u) \in \Omega_{+}\inv \Rpp$.
  Since the induced map is an isomorphism, $\alpha(u)$ is invertible
  in~$\Omega_{+}\inv \Rpp$.

  Given any $x \in \R$ there exists $k \geq 0$ with $u^{k} x \in \Rpp$
  and thus $\alpha(u^{k} x) \in \Omega_{+}\inv \Rpp$; we define
  $\iota(x) = \alpha(u)^{-k} \cdot \alpha(u^{k}x) \in \Omega_{+}\inv
  \Rpp$. The element $\iota(x)$ does not depend on the choice of~$k$,
  for if $\ell > k$ we have
  \begin{multline*}
    \alpha(u)^{-\ell} \cdot \alpha(u^{\ell}x) = \alpha(u)^{-k}
    \alpha(u)^{-\ell+k} \cdot \alpha(u^{\ell-k} u^{k}x) =
    \alpha(u)^{-k} \cdot \alpha(u^{k}x) \ ,
  \end{multline*}
  since $u^{\ell-k} \in \Rpp$ and since $\alpha$ is a ring
  homomorphism. Note that $\iota(x) = \alpha(x)$ for $x \in \Rpp$, and
  that $\iota(u\inv) = \alpha(u)\inv$.

  Suppose that $x \in \R$ and $k,\ell \geq 0$ are such that
  $u^{k} x u^{-\ell} \in \Rpp$. Then
  $\alpha(u^{k} x u^{-\ell}) = \alpha(u^{k} x) \alpha(u)^{-\ell}$,
  since both sides equal~$\alpha(u^{k} x)$ after multiplication with
  $\alpha(u)^{\ell}$. Consequently, for $x,y \in \R$ and
  $k, \ell \geq 0$ with $u^{\ell}y,\ u^k x u^{-\ell} \in \Rpp$ we
  calculate
  \begin{align*}
    \iota(xy)                                & = \iota(x u^{-\ell}
                                               u^{\ell} y) \\ 
                                             & = \alpha(u)^{-k} \cdot
                                               \alpha (u^{k} x
                                               u^{-\ell} u^{\ell} y) \\
                                             & = \alpha(u)^{-k} \cdot
                                               \alpha (u^{k}x u^{-\ell}) \cdot \alpha(u^{\ell} y)                         \\
                                             & = \alpha(u)^{-k} \cdot \alpha (u^{k} x) \cdot \alpha(u)^{-\ell}
                                               \cdot \alpha(u^{\ell} y) = \iota(x) \cdot \iota(y) \ .
  \end{align*}
  Since $\iota$ is clearly additive, the map
  $\iota \colon \R \rTo \Omega_{+}\inv \Rpp$ is thus a ring
  homomorphism. Moreover, $\iota$ is injective as
  $\iota(x) = \alpha(u)^{-k} \cdot \alpha(u^{k}x)$ vanishes if and
  only if $\alpha(u^{k}x)$ vanishes. It follows from
  Lemmas~\ref{lem:incl_is_epi} and~\ref{lem:cat-thy} that the ring
  homomorphism~$\delta$ is an isomorphism and satisfies
  $\delta \iota = \gamma$.
\end{proof}

%\clearpage


\begin{thebibliography}{Dad80}

\bibitem[Dad80]{GRD}
\textsc{Everett Clarence Dade}.
\newblock Group-graded rings and modules.
\newblock {\em Math. Z.}, 174(3):241--262, 1980.

\bibitem[HS17]{MR3704930}
\textsc{Thomas H{\"u}ttemann} and \textsc{Luke Steers}.
\newblock Finite domination and {N}ovikov homology over strongly {$\Bbb
  Z$}-graded rings.
\newblock {\em Israel J. Math.}, 221(2):661--685, 2017.

\bibitem[H{\"u}t11]{MR2978500}
\textsc{Thomas H{\"u}ttemann}.
\newblock Double complexes and vanishing of {N}ovikov cohomology.
\newblock {\em Serdica Math. J.}, 37(4):295--304 (2012), 2011.

\bibitem[Ran85]{Ranicki-finiteness}
\textsc{Andrew Ranicki}.
\newblock The algebraic theory of finiteness obstruction.
\newblock {\em Math. Scand.}, 57(1):105--126, 1985.

\bibitem[Ran98]{Ranicki-knot}
\textsc{Andrew Ranicki}.
\newblock {\em High-dimensional knot theory}.
\newblock Springer Monographs in Mathematics. Springer-Verlag, New York, 1998.
%\newblock Algebraic surgery in codimension 2, With an appendix by \textsc{Elmar Winkelnkemper}.
\newblock With an appendix by \textsc{Elmar Winkelnkemper}.

\bibitem[Sch85]{MR800853}
\textsc{Aidan~H. Schofield}.
\newblock {\em Representation of rings over skew fields}, volume~92 of {\em
  London Mathematical Society Lecture Note Series}.
\newblock Cambridge University Press, Cambridge, 1985.

\end{thebibliography}
\end{document}